\def\q{\hfill\rule{1ex}{1ex}}
\def\0{\emptyset}
\def\q{\hfill\rule{1ex}{1ex}}
\newtheorem{theorem}{Theorem}[section]
\newtheorem{lemma}[theorem]{Lemma}
\newtheorem{claim}[theorem]{Claim}
\newtheorem{cor}[theorem]{Corollary}
\newenvironment{proof}{{\noindent\it Proof.}}{\hfill $\square$\par}
\newcounter{cases}
\newcounter{subcases}[cases]
\newenvironment{mycase}
{
    \setcounter{cases}{0}
    \setcounter{subcases}{0}
    \newcommand{\case}
    {
        \par\indent\stepcounter{cases}\textbf{Case \thecases.}
    }
    
}
{
    \par
}
\renewcommand*\thecases{\arabic{cases}}
\begin{document}
\allowdisplaybreaks

\title{The maximum sum of sizes of non-empty cross $L$-intersecting families}
\author{
    {\small\bf Xiamiao Zhao}\thanks{email:  zxm23@mails.tsinghua.edu.cn}\quad
    {\small\bf Haixiang Zhang}\thanks{email:  zhang-hx22@mails.tsinghua.edu.cn}\quad
    {\small\bf Mei Lu}\thanks{email: lumei@tsinghua.edu.cn}\\
    {\small Department of Mathematical Sciences, Tsinghua University, Beijing 100084, China.}\\
}

\date{}

\maketitle\baselineskip 16.3pt

\begin{abstract}
Let $n$, $r$, and $k$ be positive integers such that $k, r \geq 2$,  $L$  a non-empty subset of $[k]$, and  $\mathcal{F}_i \subseteq \binom{[n]}{k}$ for $1 \leq i \leq r$. We say that non-empty families $\mathcal{F}_1, \mathcal{F}_2, \ldots, \mathcal{F}_r$ are $r$-cross $L$-intersecting if $\left| \bigcap_{i=1}^r F_i \right| \in L$ for every choice of $F_i \in \mathcal{F}_i$ with $1 \leq i \leq r$. They are called  pairwise cross $L$-intersecting if $|A \cap B| \in L$ for all $A \in \mathcal{F}_i$, $B \in \mathcal{F}_j$ with $i \neq j$.
If $r=2$, we  simply say  cross $L$-intersecting instead of $2$-cross $L$-intersecting or  pairwise cross $L$-intersecting. In this paper, we determine the maximum possible sum of sizes of non-empty cross $L$-intersecting families $\mathcal{F}_1$ and $\mathcal{F}_2$ for all admissible $n$, $k$, and $L$, and we characterize all the extremal structures.  We also establish the maximum value of the sum of sizes of families $\mathcal{F}_1, \dots, \mathcal{F}_r$ that are both pairwise cross $L$-intersecting and $r$-cross $L$-intersecting, provided $n$ is sufficiently large and $L$ satisfies certain conditions. Furthermore, we characterize all such families attaining the maximum total size.
\end{abstract}


{\bf Keywords:}  Extremal set theory;  cross $L$-intersecting; pairwise cross $L$-intersecting; $r$-cross $L$-intersecting
\vskip.3cm

\section{Introduction}
Set $\binom{X}{k}$ to denote the family of all $k$-subsets of $X$. In particular, for a positive integer $n$, let $[n]$ denote the set $\{1,2,\dots,n\}$, and $[k,n] = \{k,\dots,n\}$ for $k \leq n$. For two positive integers $a,b$, we set $\binom{a}{b} = 0$ if $b > a$.

Let $n$, $t$, and $k$ be positive integers such that $k \geq 2$ and $1 \le t \le k-1$. A family $\mathcal{A} \subseteq \binom{[n]}{k}$ is called {\em $t$-intersecting} if $|A \cap B| \geq t$ for all $A,B \in \mathcal{A}$. When $t = 1$, we say $\mathcal{A}$ is intersecting rather than $t$-intersecting. The celebrated Erd\H{o}s--Ko--Rado theorem \cite{erdos1961intersection} states that for $n \geq 2k$, a full star is the unique intersecting $k$-uniform family of maximum size.
\begin{theorem}[\cite{erdos1961intersection}]
    Let $n \geq 2k$, and let $\mathcal{A} \subseteq \binom{[n]}{k}$ be an intersecting family. Then
    \[
    |\mathcal{A}| \leq \binom{n-1}{k-1}.
    \]
    Moreover, when $n > 2k$, equality holds if and only if $\mathcal{A} = \{A \in \binom{[n]}{k} : i \in A\}$ for some $i \in [n]$.
\end{theorem}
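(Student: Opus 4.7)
My plan is to establish the upper bound via Katona's cyclic permutation argument and then handle the uniqueness statement by tracking the equality case, with the shifting technique as a backup route.

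For the bound, I would fix a cyclic ordering $\pi$ of $[n]$ and call $A \in \binom{[n]}{k}$ a $\pi$-\emph{arc} if its elements occupy $k$ consecutive positions along $\pi$. The key combinatorial lemma is: when $n \geq 2k$, any pairwise intersecting subfamily of the $n$ arcs in $\pi$ contains at most $k$ members. To prove it, I would fix one arc $A_0$; any other arc meeting $A_0$ must start at one of $2(k-1)$ distinguished positions (within $k-1$ steps of $A_0$'s starting position on either side), and these positions pair up so that paired arcs are disjoint, which requires $n \geq 2k$. At most one arc per pair can be selected, giving at most $k-1$ additional arcs and hence at most $k$ in total.

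With this lemma in hand I would double-count the pairs $(A,\pi)$ with $A \in \mathcal{A}$ a $\pi$-arc. Since there are $(n-1)!$ cyclic orderings and each $k$-set is a $\pi$-arc in exactly $k!(n-k)!$ of them, the lemma yields
\[
|\mathcal{A}| \cdot k!(n-k)! \;\leq\; k \cdot (n-1)!,
\]
which rearranges to $|\mathcal{A}| \leq \binom{n-1}{k-1}$.

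For the uniqueness claim when $n > 2k$, equality in the above double count forces every cyclic ordering $\pi$ to contribute exactly $k$ arcs from $\mathcal{A}$. The main obstacle is promoting this purely local condition to the global statement that $\mathcal{A}$ is a star. My primary approach is to show that, in any $\pi$ achieving $k$ arcs, these arcs must form a ``consecutive chain'' of $\pi$-arcs sharing a common element, and then, by analyzing what happens when adjacent entries of $\pi$ are swapped, conclude that the shared element is independent of $\pi$, which pins down a single $i \in [n]$ contained in every $A \in \mathcal{A}$. If this propagation argument proves delicate (the strict inequality $n > 2k$ is where rigidity enters and must be used carefully), I would instead apply the compressions $S_{ij}$, which preserve both the intersecting property and $|\mathcal{A}|$, reduce to a left-shifted family, verify directly that a shifted intersecting family of size $\binom{n-1}{k-1}$ must equal the star at element $1$ when $n > 2k$, and then invert the shifts to identify the original extremal family as a star.
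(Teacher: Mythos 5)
The paper does not prove this statement: it is the classical Erd\H{o}s--Ko--Rado theorem, quoted from \cite{erdos1961intersection} purely as background, so there is no in-paper argument to compare yours against. Judged on its own, your plan is the standard and correct Katona cycle proof. The counting lemma (at most $k$ pairwise intersecting arcs on an $n$-cycle when $n\ge 2k$, via pairing the $2(k-1)$ candidate starting positions into $k-1$ disjoint pairs) and the double count $|\mathcal{A}|\cdot k!(n-k)!\le k\cdot(n-1)!$ are both right and give the bound $\binom{n-1}{k-1}$ cleanly. The only part that is genuinely a sketch rather than a proof is uniqueness. For your primary route you need the rigidity lemma that for $n\ge 2k+1$ an intersecting family of exactly $k$ arcs must be the $k$ arcs through a common point (true, but it is exactly where $n>2k$ is used and it deserves a proof rather than an assertion, since for $n=2k$ it fails badly: any transversal of the complementary pairs works), and the subsequent ``swap adjacent entries of $\pi$'' propagation must be carried out to show the common point is the same element of $[n]$ for all cyclic orders. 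Your backup via the compressions $S_{ij}$ is also standard, but inverting the shifts is itself the delicate step there: one must argue that if $S_{ij}(\mathcal{A})$ is a star and $|\mathcal{A}|=\binom{n-1}{k-1}$ then $\mathcal{A}$ was already a star, which requires its own case analysis. Neither gap is fatal --- both routes are well known to close --- but as written the uniqueness half is a plan, not a proof.
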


The Erd\H{o}s--Ko--Rado theorem is widely regarded as a cornerstone of extremal combinatorics and has led to numerous generalizations and applications. More generally, a family $\mathcal{A}$ is said to be {\em $t$-intersecting} if the size of the intersection of any two distinct sets lies in the interval $[t, k]$. Viewing $[t,k]$ as a subset of $[k]$, one can naturally extend the notion of $t$-intersection to that of $L$-intersection.

Specifically, for a non-empty set $L \subseteq [k]$, a family $\mathcal{A}$ is called {\em $L$-intersecting} if $|A \cap B| \in L$ for all distinct $A, B \in \mathcal{A}$. The problem of determining the maximum size of an $L$-intersecting family has been extensively studied in extremal set theory; see, for example, \cite{frankl2016invitation}. The celebrated Deza--Erd\H{o}s--Frankl theorem provides a general upper bound for such families and describes the rough structure of $L$-intersecting families that are nearly optimal in size.

\begin{theorem}[\cite{articlefrankl1978}]\label{thm: frankl intersect}
    Let $k \geq 3$, $n \geq 2^k k^3$, and $L \subseteq [0,k]$. If a family $\mathcal{F} \subseteq \binom{[n]}{k}$ is $L$-intersecting, then $|\mathcal{F}| \leq \prod_{\ell \in L} \frac{n - \ell}{k - \ell}$. Moreover, there exists a constant $C = C(k,L)$ such that every $L$-intersecting family $\mathcal{F}' \subseteq \binom{[n]}{k}$ with $|\mathcal{F}'| \geq C n^{|L|-1}$ satisfies $\left| \bigcap_{A \in \mathcal{F}'} A \right| \geq \min L$.
\end{theorem}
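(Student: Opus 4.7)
The plan is to prove both assertions simultaneously by induction on $|L|$, using the Erd\H{o}s--Rado sunflower lemma as the engine. Recall that a sunflower with $r$ petals is a family of $r$ sets whose pairwise intersections all coincide with a common kernel $Y$, and the sunflower lemma guarantees that any family of more than $(r-1)^k \cdot k!$ distinct $k$-sets contains such a configuration.

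Fix a sufficiently large integer $r = r(k,L)$ to be chosen later, and suppose for the upper bound that $|\mathcal{F}| > \prod_{\ell \in L} \frac{n-\ell}{k-\ell}$. For $n \geq 2^k k^3$ this product exceeds $(r-1)^k k!$, so the sunflower lemma yields a sunflower $A_1,\ldots,A_r \in \mathcal{F}$ with kernel $Y$. Since $|A_i \cap A_j| = |Y|$ must lie in $L$, we obtain $|Y|\in L$. The next step I would carry out is to show that, when $r$ is large, every $A \in \mathcal{F}$ must satisfy $Y \subseteq A$. Indeed, the petals $A_i \setminus Y$ are pairwise disjoint, and if $|A \cap Y| < |Y|$, then for each $i$ the relation $|A \cap A_i| \in L$ pins down $|A \cap (A_i \setminus Y)|$ to one of at most $|L|$ values; a pigeonhole / counting argument on the disjoint petals forces $A$ to meet more of $[n]$ than it has elements whenever $r$ is comfortably larger than $k|L|$. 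Hence $Y \subseteq A$ for every $A \in \mathcal{F}$, which already yields $|\bigcap_{A \in \mathcal{F}} A| \geq |Y| \geq \min L$ and proves the structural assertion for the threshold $C = C(k,L)$ obtained this way.

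With the kernel $Y$ common to every member of $\mathcal{F}$, pass to the reduced family $\mathcal{F}^{*} = \{A \setminus Y : A \in \mathcal{F}\} \subseteq \binom{[n]\setminus Y}{k-|Y|}$, which is $L^{*}$-intersecting for $L^{*} = \{\ell - |Y| : \ell \in L,\ \ell \geq |Y|\}$. Note $|L^{*}| \leq |L|-1$ since $|Y|\in L$ disappears (it becomes $0$, which is excluded from $L^{*}$). The induction hypothesis gives
\[
|\mathcal{F}| = |\mathcal{F}^{*}| \leq \prod_{\ell \in L^{*}} \frac{(n-|Y|)-\ell}{(k-|Y|)-\ell} = \prod_{\substack{\ell \in L \\ \ell \geq |Y|,\ \ell \neq |Y|}} \frac{n-\ell}{k-\ell} \leq \prod_{\ell \in L}\frac{n-\ell}{k-\ell},
\]
contradicting our standing assumption. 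The base case $|L|=1$, say $L = \{t\}$, reduces to the classical bound $|\mathcal{F}| \leq \binom{n-t}{k-t}$ for exactly $t$-intersecting families, which follows from Frankl's version of the Erd\H{o}s--Ko--Rado theorem in the range $n \geq 2^k k^3$.

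The main obstacle I expect is the quantitative bookkeeping: one must choose $r$ large enough for the pigeonhole argument in the second step while keeping the threshold $(r-1)^k k!$ below $\prod_{\ell \in L}\frac{n-\ell}{k-\ell}$ under the hypothesis $n \geq 2^k k^3$, and one must thread the inductive constant $C(k,L)$ through each reduction so that the final $L$-level threshold is still polynomial of degree $|L|-1$ in $n$. A secondary subtlety lies in verifying that no $A\in\mathcal{F}$ can avoid the kernel $Y$: the counting that forbids $|A\cap Y|<|Y|$ has to account for the $|L|$ allowed intersection sizes with each petal simultaneously, and this is where the dependence of $r$ on both $k$ and $|L|$ really enters.
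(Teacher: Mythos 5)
The paper offers no proof of this statement --- it is quoted directly from Deza--Erd\H{o}s--Frankl \cite{articlefrankl1978} --- so I can only assess your argument on its own terms. The sunflower (delta-system) method is indeed the right engine historically, but your central step is false as stated: the kernel $Y$ of an \emph{arbitrary} large sunflower in an $L$-intersecting family need not be contained in every member. Take $k=3$, $L=\{1,2\}$ and $\mathcal{F}=\{A\in\binom{[n]}{3}:x\in A\}$, which is $L$-intersecting of size $\binom{n-1}{2}$. The sets $\{x,y,a_i\}$ form a sunflower with arbitrarily many petals and kernel $Y=\{x,y\}$, yet $Y\not\subseteq\{x,b,c\}\in\mathcal{F}$ when $y\notin\{b,c\}$. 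Your pigeonhole cannot exclude this: whenever $|A\cap Y|$ itself lies in $L$, the set $A$ is permitted to be disjoint from every petal (then $|A\cap A_i|=|A\cap Y|\in L$ is legal), so the argument only yields $|A\cap Y|\in L$, not $Y\subseteq A$. Since both the structural conclusion and the inductive reduction rest on $Y\subseteq A$ for all $A\in\mathcal{F}$, the proof collapses here; the actual proof must select the common kernel far more carefully (e.g.\ via a maximal set contained in sufficiently many members), and that is where the real work in \cite{articlefrankl1978} lies.

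Three further problems would remain even after repairing that step. First, after deleting $Y$ the $r$ sunflower members become pairwise \emph{disjoint}, so $0$ must belong to the intersection pattern of $\mathcal{F}^{*}$; you cannot exclude it, and with $0$ included one has $|L^{*}|=|\{\ell\in L:\ell\ge |Y|\}|$, which equals $|L|$ whenever $|Y|=\min L$, so the induction on $|L|$ does not descend. Second, the base case $L=\{t\}$ requires the bound $\tfrac{n-t}{k-t}$, which comes from Deza's theorem (an equidistant family of size greater than $k^2-k+1$ is itself a sunflower), not the EKR-type bound $\binom{n-t}{k-t}$ you invoke; the latter is the bound for $[t,k]$-intersecting families and is far too large to close an induction whose output is a product of \emph{linear} factors in $n$. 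Third, the quantitative issue you defer as ``bookkeeping'' is a genuine obstruction for the upper-bound half: at $n=2^kk^3$ with $|L|=2$ the product $\prod_{\ell\in L}\tfrac{n-\ell}{k-\ell}\le n^2=4^kk^6$ is already smaller than the sunflower-lemma threshold $k!\cdot 2^k$ for large $k$, so a family barely violating the claimed bound is not even guaranteed to contain a sunflower with three petals.
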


For non-empty families $\mathcal{A}_1, \mathcal{A}_2, \dots, \mathcal{A}_r \subseteq \binom{[n]}{k}$, they are said to be {\em pairwise cross $t$-intersecting} if $|A \cap B| \geq t$ for all $A \in \mathcal{A}_i$, $B \in \mathcal{A}_j$ with $i \neq j$. They are called {\em $r$-cross $t$-intersecting} if $\left| \bigcap_{i=1}^r A_i \right| \geq t$ for all $A_i \in \mathcal{A}_i$.

This paper generalizes the study of cross $t$-intersecting families to the cross $L$-intersecting families. For a non-empty set $L \subseteq [0,k]$, families $\mathcal{A}_1, \mathcal{A}_2, \dots, \mathcal{A}_r \subseteq \binom{[n]}{k}$ are called {\em pairwise cross $L$-intersecting} if $|A \cap B| \in L$ for all $A \in \mathcal{A}_i$, $B \in \mathcal{A}_j$ with $i \neq j$. They are called {\em $r$-cross $L$-intersecting} if $\left| \bigcap_{i=1}^r A_i \right| \in L$ for all $A_i \in \mathcal{A}_i$. Note that when $L = [t, k]$, pairwise cross $L$-intersecting (resp.\ $r$-cross $L$-intersecting) reduces to pairwise cross $t$-intersecting (resp.\ $r$-cross $t$-intersecting). When $r = 2$, we simply say {\em cross $L$-intersecting} or {\em cross $t$-intersecting} instead of $2$-cross $L$-intersecting or $2$-cross $t$-intersecting. In what follows, let $L \subseteq [0,k]$ be a non-empty set, and define $k-L = \{k - i : i \in L\}$. For $\mathcal{A} \subseteq \binom{[n]}{k}$, let $\overline{\mathcal{A}} = \binom{[n]}{k} \setminus \mathcal{A}$.

Wang and Zhang \cite{WANG2013129} determined the maximum total size of two cross $t$-intersecting families of finite sets.

\begin{theorem}[\cite{WANG2013129}]
    Let $n, a, b, t$ be positive integers such that $n \geq 4$, $a, b \geq 2$, $t < \min\{a,b\}$, $a + b < n + t$, $(n,t) \neq (a+b,1)$, and $\binom{n}{a} \leq \binom{n}{b}$. If non-empty families $\mathcal{A} \subseteq \binom{[n]}{a}$ and $\mathcal{B} \subseteq \binom{[n]}{b}$ are cross $t$-intersecting, then
    \[
    |\mathcal{A}| + |\mathcal{B}| \leq \binom{n}{b} - \sum_{i=0}^{t-1} \binom{a}{i} \binom{n-a}{b-i} + 1.
    \]
\end{theorem}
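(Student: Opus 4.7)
The plan is to first pin down the extremal construction, then reduce to shifted families via the shifting technique, and finally establish the upper bound through an injection argument. The natural candidate achieving the bound is: fix any $A_0 \in \binom{[n]}{a}$ and take $\mathcal{A}^* = \{A_0\}$, $\mathcal{B}^* = \{B \in \binom{[n]}{b} : |A_0 \cap B| \geq t\}$. Counting $b$-sets by their intersection size with $A_0$ yields $|\mathcal{B}^*| = \binom{n}{b} - \sum_{i=0}^{t-1}\binom{a}{i}\binom{n-a}{b-i}$, and under the stated hypotheses this pair is non-empty and cross $t$-intersecting, so the right-hand side is attained.

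For the upper bound, I would first apply the $(i,j)$-shifting operator $S_{ij}$ simultaneously to $(\mathcal{A},\mathcal{B})$ for every $1 \leq i < j \leq n$. A routine verification shows that simultaneous shifting preserves $|\mathcal{A}|$, $|\mathcal{B}|$, and the cross $t$-intersecting property, so I may assume both families are shifted. In particular $[a] \in \mathcal{A}$ and $[b] \in \mathcal{B}$, with $|[a] \cap [b]| = \min\{a,b\} \geq t$ consistent with the cross-intersection constraint.

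The main argument then splits on $|\mathcal{A}|$. If $|\mathcal{A}| = 1$, then $\mathcal{A} = \{[a]\}$ and cross $t$-intersection forces $\mathcal{B} \subseteq \{B : |B \cap [a]| \geq t\}$, giving the bound immediately. If $|\mathcal{A}| \geq 2$, I would construct an injection $\phi \colon \mathcal{A} \setminus \{[a]\} \to \{B \in \binom{[n]}{b} : |B \cap [a]| \geq t\} \setminus \mathcal{B}$. The idea is that each $A' \in \mathcal{A} \setminus \{[a]\}$, being a shifted $a$-set distinct from $[a]$, omits some element of $[a]$ and contains some element outside $[a]$; this structural feature lets me build a $b$-set hitting $[a]$ in exactly $t$ elements while hitting $A'$ in only $t-1$ elements, so cross $t$-intersection forces such a $b$-set to lie outside $\mathcal{B}$. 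Summing over all such $A'$ gives $|\mathcal{A}| - 1 + |\mathcal{B}| \leq |\{B : |B \cap [a]| \geq t\}|$, which is the claimed inequality.

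The hard part will be making $\phi$ explicit and verifying its injectivity across all valid parameter ranges. The condition $a + b < n + t$ provides the necessary room outside $[a] \cup A'$ to complete the target $b$-sets, while the exclusion $(n,t) \neq (a+b, 1)$ rules out a degenerate boundary case where complementation could swap the roles of $\mathcal{A}$ and $\mathcal{B}$ and produce a competing extremal configuration. To characterize equality, I would reverse the shifting: the only shifted extremal pair is $(\{[a]\}, \{B : |B \cap [a]| \geq t\})$, and unshifting shows that every extremal pair must have the form $(\{A_0\}, \{B : |B \cap A_0| \geq t\})$ for some $A_0 \in \binom{[n]}{a}$.
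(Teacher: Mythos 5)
The paper does not actually prove this statement; it imports it from Wang and Zhang, whose argument encodes cross $t$-intersecting pairs as independent sets of a part-transitive bipartite graph on $\binom{[n]}{a}\cup\binom{[n]}{b}$ and determines the independence number via the fragment machinery reproduced in Section 2 (Theorem \ref{thm: bipartite} and Lemmas \ref{lem: bipartite}--\ref{lem: bipartite 3}). Your route --- simultaneous shifting followed by an injection $\phi\colon\mathcal{A}\setminus\{[a]\}\to\{B\in\binom{[n]}{b}:|B\cap[a]|\ge t\}\setminus\mathcal{B}$ --- is genuinely different, and its framing is sound as far as it goes: the count for the extremal configuration is correct, simultaneous shifting does preserve cross $t$-intersection, a non-empty shifted family does contain $[a]$, and the case $|\mathcal{A}|=1$ is immediate.

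However, the injection is not a deferred technicality; it is essentially the whole theorem, and your sketch of it has two genuine gaps. First, the witness you describe --- a $b$-set meeting $[a]$ in exactly $t$ elements and $A'$ in exactly $t-1$ --- need not exist under the hypothesis $a+b<n+t$: writing $c=|A'\cap[a]|$, when $c$ is small (shiftedness does not prevent $A'$ from being disjoint from $[a]$) the remaining $b-t$ elements of $B$ must avoid most of $A'\setminus[a]$, which requires roughly $n\ge 2a+b-2t$ rather than $n\ge a+b-t+1$; one must allow $|B\cap[a]|>t$, and the construction then needs a case analysis you have not supplied. Second, and more seriously, a single $B$ with $|B\cap A'|\le t-1$ certifies $B\notin\mathcal{B}$ simultaneously for every $A'\in\mathcal{A}$ that it fails to $t$-intersect, so the witness sets of distinct members of $\mathcal{A}\setminus\{[a]\}$ overlap heavily; upgrading ``each $A'$ has a witness'' to an injection is a defect-Hall statement for the non-intersection bipartite graph, and establishing it is precisely the content of the fragment/primitivity analysis that Wang and Zhang carry out. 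Without an explicit $\phi$ together with a proof of its injectivity (or an equivalent Hall-type argument), the claimed inequality is not established.
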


The main method in the proof of the above theorem involves analyzing a bipartite graph defined in \cite{WANG2013129}. Using a similar approach, we generalize their result in a specific setting. Let $\mathcal{A}, \mathcal{B} \subseteq \binom{[n]}{k}$. Note that when $k \leq n < 2k$ and $[2k-n, k] \cap L = \emptyset$, any two $k$-sets in $\binom{[n]}{k}$ intersect in at least $2k - n$ elements, which forces one of $\mathcal{A}$ or $\mathcal{B}$ to be empty.  Hence, we first make the following remark.

\vskip 0.2cm
\noindent \textbf{Remark.} If $n < 2k$ and $[2k - n, k] \cap L = \emptyset$, then there exist no non-empty cross $L$-intersecting families $\mathcal{A}, \mathcal{B} \subseteq \binom{[n]}{k}$.

We now present our first result.

\begin{theorem}\label{thm: 2 cross L intersecting}
    Let $n, k$ be positive integers with $n \geq k \geq 2$, and $\emptyset \neq L \subseteq [0,k]$. 
    Let $\mathcal{A}, \mathcal{B} \subseteq \binom{[n]}{k}$ be non-empty cross $L$-intersecting families.

    \noindent \textbf{\rm(i)} If $n \geq 2k$ and $L = [0,k]$, or if $n < 2k$ and $[2k - n, k] \subseteq L$, then
    \[
    |\mathcal{A}| + |\mathcal{B}| \leq 2 \binom{n}{k},
    \]
    and equality holds if and only if $\mathcal{A} = \mathcal{B} = \binom{[n]}{k}$.

    \noindent \textbf{\rm(ii)} If $n > 2k$ and $L \neq [0,k]$, or $n = 2k$ and $L \notin \{[0,k], k-L\}$, or $n < 2k$ and $[2k - n, k] \cap L \notin \{\emptyset, [2k - n, k]\}$, then
    \[
    |\mathcal{A}| + |\mathcal{B}| \leq \sum_{i \in L} \binom{k}{i} \binom{n-k}{k-i} + 1,
    \]
    and equality holds if and only if (up to isomorphism) one of the following holds:
    \begin{enumerate}
        \item $\mathcal{A} = \{[k]\}$, $\mathcal{B} = \left\{ F \in \binom{[n]}{k} : |F \cap [k]| \in L \right\}$;
        \item $n \geq 2k$ and $L = [0, k-1]$, or $n < 2k$ and $[2k - n, k-1] \subseteq L$, and $\mathcal{A} = \overline{\mathcal{B}}$;
        \item $k = 2$, $L = \{1,2\}$, and $\mathcal{A} = \mathcal{B} = \left\{ F \in \binom{[n]}{2} : 1 \in F \right\}$;
        \item $k = n - 2$, $L \cap [k-2, k] = \{k-1, k\}$, and $\mathcal{A} = \mathcal{B} = \binom{[n-1]}{n-2}$.
    \end{enumerate}

    \noindent \textbf{\rm(iii)} If $n = 2k$ and $L = k-L$, then
    \[
    |\mathcal{A}| + |\mathcal{B}| \leq \sum_{i \in L} \binom{k}{i} \binom{n-k}{k-i} + 2,
    \]
    and equality holds if and only if (up to isomorphism) one of the following holds:
    \begin{enumerate}
        \item $\mathcal{A} = \{[k], [k+1, 2k]\}$, $\mathcal{B} = \left\{ F \in \binom{[n]}{k} : |F \cap [k]| \in L \right\}$;
        \item $L = [k-1]$, $\emptyset \neq \mathcal{A} = \overline{\mathcal{B}} \subseteq \binom{[n]}{k}$, and $\mathcal{A}$ is closed under complementation, i.e., if $A \in \mathcal{A}$ then $[n] \setminus A \in \mathcal{A}$.
    \end{enumerate}
\end{theorem}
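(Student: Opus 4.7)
The plan is to adapt the bipartite graph technique of Wang and Zhang \cite{WANG2013129} and combine it with a Hall-type neighbourhood expansion argument. Part (i) is immediate: under its hypothesis every pair of $k$-subsets of $[n]$ meets in a size lying in $L$, so the cross-$L$-intersecting condition is vacuous and $\mathcal{A}=\mathcal{B}=\binom{[n]}{k}$ is the unique extremum.

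For parts (ii) and (iii), define a bipartite graph $G$ with parts $X=Y=\binom{[n]}{k}$, where $A\in X$ is joined to $B\in Y$ exactly when $|A\cap B|\notin L$. A double counting shows $G$ is $d$-regular with
\[
d \;=\; \binom{n}{k}\;-\;\sum_{i\in L}\binom{k}{i}\binom{n-k}{k-i},
\]
and the hypotheses of (ii) and (iii) guarantee $d\geq 1$. A pair $(\mathcal{A},\mathcal{B})$ is cross $L$-intersecting iff no edge of $G$ joins $\mathcal{A}\subseteq X$ to $\mathcal{B}\subseteq Y$, equivalently iff $Y\setminus\mathcal{B}\supseteq N_G(\mathcal{A})$. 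Consequently, given a non-empty $\mathcal{A}\subsetneq X$, any admissible $\mathcal{B}$ satisfies $|\mathcal{B}|\leq \binom{n}{k}-|N_G(\mathcal{A})|$, so the theorem reduces to the Hall-type expansion inequality
\[
|N_G(\mathcal{A})|\;\geq\;|\mathcal{A}|+d-1 \qquad (\text{respectively }|\mathcal{A}|+d-2 \text{ in case (iii)})
\]
for every non-empty $\mathcal{A}\subsetneq X$, with a classification of its equality cases.

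I would establish the expansion inequality by induction on $|\mathcal{A}|$. The base case $|\mathcal{A}|=1$ is immediate from $d$-regularity. For the inductive step it suffices to show that adding a new set $A'$ to $\mathcal{A}$ introduces at least one neighbour outside $N_G(\mathcal{A})$, apart from a short list of coincidences. I would detect these coincidences by computing, for distinct $A,A'$ with $|A\cap A'|=j$, the symmetric difference $|N_G(A)\triangle N_G(A')|$ as a sum of binomials in $j$ and $L$. The pairs on which this symmetric difference vanishes (or is dramatically smaller than expected) turn out to be exactly: (a) $n=2k$ with $L=k-L$ and $A'=[n]\setminus A$, the complementation symmetry producing the extra unit of slack in case (iii) and underlying item 2 of (iii); (b) the degenerate parameter regimes $k=2,\ L=\{1,2\}$ and $k=n-2,\ L\cap[k-2,k]=\{k-1,k\}$ from items 3 and 4 of (ii); and (c) the $d=1$ regime $L=[0,k-1]$-type (so that $N_G(A)=\{A\}$) underlying item 2 of (ii). Outside of these coincidences the expansion is strict by at least one whenever a new element is added to $\mathcal{A}$, completing the induction.

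Once the expansion lemma and its equality cases are in hand, the numerical bounds follow immediately, and the characterization of extremal families becomes routine: item 1 of each part corresponds to the base case $|\mathcal{A}|=1$; item 2 of (ii) and item 2 of (iii) arise from coincidences (c) and (a) respectively; items 3 and 4 of (ii) come from (b). In every extremal configuration, after normalizing $\mathcal{A}$ the cross-$L$-intersecting condition forces $\mathcal{B}=Y\setminus N_G(\mathcal{A})$, pinning down the pair completely. The principal obstacle is the exhaustive enumeration of the coincidences (a)--(c); tracking how these interact with the small-$n$ regimes (especially $n=2k$ and $n-k=2$) and with the complementation symmetry $F\mapsto[n]\setminus F$ (which sends $(k,L)$ to $(n-k,k-L)$) requires careful but combinatorially transparent case analysis.
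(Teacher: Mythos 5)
Your reduction to the bipartite graph $G$ on $X=Y=\binom{[n]}{k}$ and to the expansion inequality $|N_G(\mathcal{A})|\geq|\mathcal{A}|+d-1$ is exactly the paper's starting point (this quantity is $\epsilon(X)$ in the Wang--Zhang framework, and the inequality is the statement $\alpha(X,Y)=|Y|-d(X)+1$). Part (i) is fine. But the proposed proof of the expansion inequality by induction on $|\mathcal{A}|$, with the inductive step controlled by the pairwise symmetric differences $|N_G(A)\triangle N_G(A')|$, has a genuine gap. The inductive step requires that some $A'\in\mathcal{A}$ satisfy $N_G(A')\not\subseteq N_G(\mathcal{A}\setminus\{A'\})$, which is a \emph{covering} condition against the union of all the other neighbourhoods; pairwise comparisons cannot detect it. Worse, at every extremal configuration (and these are exactly what you must classify) \emph{every} member is redundant in this sense: for instance when $\mathcal{A}=\{F:1\in F\}$, $k=2$, $L=\{1,2\}$, removing any single $F$ from $\mathcal{A}$ does not shrink $N_G(\mathcal{A})$, so the peeling induction only yields $|N_G(\mathcal{A})|\geq|\mathcal{A}|+d-2$ there, and you are left with no mechanism to decide for which families the loss of one unit actually occurs. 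Establishing that the only such families are the ones in your list (a)--(c) is the entire difficulty of the theorem, and your sketch asserts it rather than proves it.

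The paper closes this gap with machinery you do not invoke: the minimizers of $|N(\mathcal{A})|-|\mathcal{A}|$ (fragments) are analyzed through the primitivity of the $S_n$-action on $\binom{[n]}{k}$ for $n\neq 2k$, the fact that the only imprimitive sets for $n=2k$ are complementary pairs, the involution $\phi(A)=Y\setminus N(A)$ together with Lemma \ref{lem: bipartite} (intersections and unions of translates of a minimal fragment are again fragments), and a stabilizer argument showing that a nontrivial semi-imprimitive fragment $S$ has $\Gamma^C_S$ of index $2$ in $S_C\times S_{\overline{C}}$, forcing $|C\cap B|\in\{1,k-1\}$ for all $B\in S$ and hence $k=2$ or $n-k=2$ (items 3 and 4 of (ii)). The $n=2k$, $L=k-L$ case additionally requires showing that a minimal fragment of size $\geq 3$ is a union of complementary pairs and then ruling out the six-set balanced-partition configuration by the counting identities (\ref{eq5})--(\ref{eq7}). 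None of this is reachable from pairwise symmetric-difference computations, so the proposal as written does not constitute a proof of either the upper bounds in (ii)--(iii) or the equality characterizations.
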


We now turn to the pairwise cross $L$-intersecting case. Shi, Frankl, and Qian \cite{shi2022} determined the maximum total size of pairwise $L$-intersecting families when $L = [k]$, i.e., when every two sets from different families have non-empty intersection. When $L = [k]$, we call the pairwise cross $L$-intersecting families the {\em pairwise cross-intersecting families}.

\begin{theorem}[\cite{shi2022}]
    Let $n, k, t$ be positive integers with $n \geq 2k$ and $r \geq 2$. If non-empty families $\mathcal{A}_1, \mathcal{A}_2, \dots, \mathcal{A}_r \subseteq \binom{[n]}{k}$ are pairwise cross-intersecting, then
    \[
    \sum_{i=1}^r |\mathcal{A}_i| \leq \max\left\{ \binom{n}{k} - \binom{n-k}{k} + r - 1,\; r \binom{n-1}{k-1} \right\}.
    \]
\end{theorem}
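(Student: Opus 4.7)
The plan is to combine simultaneous left-shifting with an induction on $r$. First apply the standard left-compression operators $S_{ij}$ (for $1\leq i<j\leq n$) in parallel to all $r$ families: a routine check shows this preserves each $|\mathcal{A}_i|$, non-emptiness, and the pairwise cross-intersecting property, so we may assume every $\mathcal{A}_i$ is shifted. A shifted non-empty $k$-uniform family on $[n]$ always contains $[k]$, so $[k]\in\mathcal{A}_i$ for each $i$, which (since $r\geq 2$) forces every $F\in\bigcup_i\mathcal{A}_i$ to meet $[k]$. Hence $\bigcup_i\mathcal{A}_i\subseteq\mathcal{F}_{[k]}:=\{F\in\binom{[n]}{k}:F\cap[k]\neq\emptyset\}$, a family of size $\binom{n}{k}-\binom{n-k}{k}$. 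Reindex so that $|\mathcal{A}_1|\leq\cdots\leq|\mathcal{A}_r|$.

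I would then induct on $r$, splitting on $|\mathcal{A}_1|$. The base case $r=2$ is handled directly: if $|\mathcal{A}_1|=1$ then $|\mathcal{A}_1|+|\mathcal{A}_2|\leq 1+|\mathcal{F}_{[k]}|=\binom{n}{k}-\binom{n-k}{k}+1$, while if $|\mathcal{A}_1|\geq 2$ the classical two-family Hilton bound yields $|\mathcal{A}_1|+|\mathcal{A}_2|\leq 2\binom{n-1}{k-1}$. For the inductive step with $|\mathcal{A}_1|=1$, the families $\mathcal{A}_2,\dots,\mathcal{A}_r$ remain pairwise cross-intersecting and non-empty, so the inductive hypothesis gives
\[
\sum_{j=2}^{r}|\mathcal{A}_j|\leq \max\Big\{\binom{n}{k}-\binom{n-k}{k}+r-2,\ (r-1)\binom{n-1}{k-1}\Big\},
\]
and adding $|\mathcal{A}_1|=1$, together with the routine inequality $(r-1)\binom{n-1}{k-1}+1\leq r\binom{n-1}{k-1}$, produces the claimed maximum.

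If instead $|\mathcal{A}_1|\geq 2$ at the inductive step, every family has at least two members. The key lemma in this regime is the cross-intersecting Hilton-Milner-type bound: whenever $\mathcal{A},\mathcal{B}\subseteq\binom{[n]}{k}$ are cross-intersecting with $n\geq 2k$ and $\min(|\mathcal{A}|,|\mathcal{B}|)\geq 2$, one has $|\mathcal{A}|,|\mathcal{B}|\leq\binom{n-1}{k-1}$. Applying this to each pair $(\mathcal{A}_1,\mathcal{A}_i)$ forces $|\mathcal{A}_i|\leq\binom{n-1}{k-1}$ for every $i$, and summation yields $\sum_{i=1}^r|\mathcal{A}_i|\leq r\binom{n-1}{k-1}$, the second term in the max.

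The hardest step is the two-family lemma above. Its standard proof uses further shifting plus an exchange argument: assuming $\mathcal{A}$ contains some $F$ with $1\notin F$, one exhibits many $G\in\binom{[n]}{k}$ with $G\cap F=\emptyset$ that must then be excluded from $\mathcal{B}$, forcing $|\mathcal{B}|$ below $\binom{n-1}{k-1}$ unless $\mathcal{A}$ itself sits inside a star centered at $1$. The delicate technical points are the tight boundary $n=2k$, where sets and their complements coincide in size and the counting becomes sharp, and verifying that the two components of the max in the inductive hypothesis each propagate to the correct component of the max in the conclusion, so that the induction does not lose information.
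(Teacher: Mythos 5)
The paper does not actually prove this statement --- it is quoted verbatim from \cite{shi2022} as background --- so your proposal can only be judged on its own terms, and it has a genuine gap. The shifting reduction, the containment $\bigcup_i\mathcal{A}_i\subseteq\{F:F\cap[k]\neq\emptyset\}$, and the $|\mathcal{A}_1|=1$ branch of the induction are all fine. The problem is the ``key lemma'' that carries the entire $|\mathcal{A}_1|\geq 2$ branch: the claim that cross-intersecting families $\mathcal{A},\mathcal{B}\subseteq\binom{[n]}{k}$ with $n\geq 2k$ and $\min(|\mathcal{A}|,|\mathcal{B}|)\geq 2$ satisfy the \emph{individual} bounds $|\mathcal{A}|,|\mathcal{B}|\leq\binom{n-1}{k-1}$ is false. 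For $k=2$ take $\mathcal{A}=\{\{1,2\},\{1,3\}\}$ and $\mathcal{B}=\{F\in\binom{[n]}{2}:1\in F\}\cup\{\{2,3\}\}$; these are cross-intersecting, $|\mathcal{A}|=2$, yet $|\mathcal{B}|=n>\binom{n-1}{1}$. In general, taking $\mathcal{A}=\{A_1,A_2\}$ with $|A_1\cap A_2|=k-1$ and $\mathcal{B}$ equal to all $k$-sets meeting both gives $|\mathcal{B}|=\binom{n}{k}-2\binom{n-k}{k}+\binom{n-k-1}{k}\approx(k-1)\binom{n-1}{k-1}$. Consequently the step ``applying this to each pair $(\mathcal{A}_1,\mathcal{A}_i)$ forces $|\mathcal{A}_i|\leq\binom{n-1}{k-1}$ for every $i$'' collapses, and $\sum_i|\mathcal{A}_i|\leq r\binom{n-1}{k-1}$ does not follow; e.g.\ for $r=3$, $k=3$, $n=10$, the families $\mathcal{A}_1=\mathcal{A}_2=\{\{1,2,3\},\{1,2,4\}\}$ and $\mathcal{A}_3$ as above have $|\mathcal{A}_3|=70>\binom{9}{2}=36$ even though all sizes are at least $2$.

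What is true (and tight, by the example above) is only the \emph{sum} form: if both families have at least two members, then $|\mathcal{A}|+|\mathcal{B}|\leq 2\binom{n-1}{k-1}$. With that lemma your induction could be repaired --- once the smallest family has size at least $2$, every pair does, and averaging $(r-1)\sum_i|\mathcal{A}_i|=\sum_{i<j}\bigl(|\mathcal{A}_i|+|\mathcal{A}_j|\bigr)\leq 2\binom{r}{2}\binom{n-1}{k-1}$ gives $r\binom{n-1}{k-1}$ directly, with no need for the $r$-induction at all. But this sum form is not ``the classical two-family Hilton bound'': Hilton's theorem only gives the maximum of the two expressions, and the refinement conditioned on $\min(|\mathcal{A}|,|\mathcal{B}|)\geq 2$ is a Frankl--Tokushige-type result whose proof is essentially the whole content of the $r=2$ case (with extra care at $n=2k$). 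As written, your argument assumes exactly the hard part, and in a form that is false.
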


Subsequently, Li and Zhang determined the maximum total size of pairwise cross $t$-intersecting families.

\begin{theorem}[\cite{LI2025105960}]\label{t-intersecting}
    Let $n, k, t, r$ be positive integers with $n \geq 2k - t + 1$, $k > t \geq 1$, and $r \geq 2$. If non-empty families $\mathcal{A}_1, \dots, \mathcal{A}_r \subseteq \binom{[n]}{k}$ are pairwise cross $t$-intersecting, then
    \[
    \sum_{i=1}^r |\mathcal{A}_i| \leq \max\left\{ \binom{n}{k} - \sum_{i=0}^{t-1} \binom{k}{i} \binom{n-k}{k-i} + r - 1,\; r \cdot M(n,k,t) \right\},
    \]
    where $M(n,k,t)$ denotes the maximum size of a $t$-intersecting family in $\binom{[n]}{k}$.
\end{theorem}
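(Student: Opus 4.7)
The plan is an induction on $r$, taking the $r=2$ base case from Wang--Zhang (the specialization of Theorem~\ref{thm: 2 cross L intersecting}(ii) to $L=[t,k]$) and handling the inductive step via a shifting-based stability argument.

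First I would apply the simultaneous $(i,j)$-shift to all $r$ families. Since shifting preserves $\sum_i |\mathcal{A}_i|$ and the pairwise cross $t$-intersecting property, I may assume henceforth that every $\mathcal{A}_i$ is left-compressed, and after relabeling that $|\mathcal{A}_1| \leq \cdots \leq |\mathcal{A}_r|$. Write $N := \binom{n}{k} - \sum_{i=0}^{t-1}\binom{k}{i}\binom{n-k}{k-i}$, which equals $|\mathcal{N}_t(A)|$ for every $A\in\binom{[n]}{k}$, where $\mathcal{N}_t(A) := \{F\in\binom{[n]}{k} : |F\cap A|\geq t\}$. The argument then splits on whether $|\mathcal{A}_1|=1$ or $|\mathcal{A}_1|\geq 2$.

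If $\mathcal{A}_1 = \{A\}$ is a singleton, pairwise cross $t$-intersection forces $\mathcal{A}_j \subseteq \mathcal{N}_t(A)$ for every $j\geq 2$. The truncated system $\mathcal{A}_2,\ldots,\mathcal{A}_r$ remains pairwise cross $t$-intersecting and non-empty, so by the induction hypothesis
\[
\sum_{j=2}^r |\mathcal{A}_j| \;\leq\; \max\bigl\{N+r-2,\ (r-1)\,M(n,k,t)\bigr\};
\]
adding $|\mathcal{A}_1|=1$ and using $M(n,k,t)\geq 1$ yields the required bound $\max\{N+r-1,\ rM(n,k,t)\}$.

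If instead $|\mathcal{A}_1|\geq 2$, the target is the second quantity $rM(n,k,t)$, and I would derive it from the following stability lemma: if $\mathcal{A},\mathcal{B}\subseteq\binom{[n]}{k}$ are non-empty, shifted, cross $t$-intersecting, and $|\mathcal{A}|,|\mathcal{B}|\geq 2$, then both $\mathcal{A}$ and $\mathcal{B}$ are themselves $t$-intersecting. Granted the lemma, applying it to every pair $(\mathcal{A}_i,\mathcal{A}_j)$ with $i\neq j$ shows that each $\mathcal{A}_i$ is $t$-intersecting, whence $|\mathcal{A}_i|\leq M(n,k,t)$ and $\sum_i|\mathcal{A}_i|\leq rM(n,k,t)$. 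The main obstacle is proving this stability lemma: since $\mathcal{A}$ is shifted and has at least two members, $[k]\in\mathcal{A}$ together with some second set $A'$ close to $[k]$ in the shift order; the two cross $t$-intersecting constraints $|B\cap[k]|\geq t$ and $|B\cap A'|\geq t$ for $B\in\mathcal{B}$, combined with the shiftedness of $\mathcal{B}$, push the elements of every $B\in\mathcal{B}$ into a narrow prefix of $[n]$ and produce a common $t$-element core inside $\mathcal{B}$; a symmetric argument handles $\mathcal{A}$. The hypothesis $n\geq 2k-t+1$ is exactly what is needed for the pigeonhole step in this pushing argument to go through, and combining this case with the singleton case yields the stated $\max$.
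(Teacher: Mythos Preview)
First, a clarification: Theorem~\ref{t-intersecting} is quoted from Li--Zhang \cite{LI2025105960} and the present paper does not supply its own proof, so there is nothing in the paper to compare your argument against. I can, however, point out a genuine gap in your proposal.

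Your ``stability lemma'' --- that shifted, cross $t$-intersecting families $\mathcal{A},\mathcal{B}\subseteq\binom{[n]}{k}$ with $|\mathcal{A}|,|\mathcal{B}|\geq 2$ must each be $t$-intersecting --- is false. Take $n=4$, $k=2$, $t=1$ (so $n=2k-t+1$), and set
\[
\mathcal{A}=\bigl\{\{1,2\},\{1,3\}\bigr\},\qquad \mathcal{B}=\bigl\{\{1,2\},\{1,3\},\{1,4\},\{2,3\}\bigr\}.
\]
Both families are left-compressed, both have size at least $2$, and every member of $\mathcal{A}$ meets every member of $\mathcal{B}$; yet $\{1,4\}\cap\{2,3\}=\emptyset$, so $\mathcal{B}$ is not intersecting, and in fact $|\mathcal{B}|=4>3=M(4,2,1)$. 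Your heuristic that the constraints $|B\cap[k]|\geq t$ and $|B\cap A'|\geq t$ ``produce a common $t$-element core inside $\mathcal{B}$'' fails precisely here: with $[k]=\{1,2\}$ and $A'=\{1,3\}$, the set $\{2,3\}$ satisfies both constraints without containing the element $1$ around which $\mathcal{A}$ is built.

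The underlying structural issue is that your dichotomy is too coarse. The case $|\mathcal{A}_1|\geq 2$ does \emph{not} force $\sum_i|\mathcal{A}_i|\leq rM(n,k,t)$ via each $\mathcal{A}_i$ being $t$-intersecting; the large family $\mathcal{A}_r$ can remain non-$t$-intersecting even when the small families have two or more members. Whatever argument handles the inductive step has to cope with this, and the lemma you propose does not. The induction-on-$r$ skeleton with the Wang--Zhang base is reasonable, but the $|\mathcal{A}_1|\geq 2$ branch needs a completely different mechanism.
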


Many related results can be found in \cite{ZHANG2024103968, HUANG2025105981}. We generalize the above result to pairwise cross $L$-intersecting families.

\begin{theorem}\label{thm: r cross L intersecting} Let $n, k,r$ be positive integers with $ k,  r\ge 2$, and $\emptyset \neq L \subseteq [0,k]$.
    Let $\mathcal{A}_1, \dots, \mathcal{A}_r \subseteq \binom{[n]}{k}$ be non-empty pairwise cross $L$-intersecting families with $|\mathcal{A}_1| \leq |\mathcal{A}_2| \leq \dots \leq |\mathcal{A}_r|$. For sufficiently large $n$, the following results hold.

    \noindent \textbf{\rm(i)} If $L = [0,k]$, then
    \[
    \sum_{i=1}^r |\mathcal{A}_i| \leq r \binom{n}{k},
    \]
    and equality holds if and only if $\mathcal{A}_1 = \dots = \mathcal{A}_r = \binom{[n]}{k}$ (up to isomorphism).

    \noindent \textbf{\rm(ii)} If $L = [t, k]$ for some $t \in [k]$, then
    \[
    \sum_{i=1}^r |\mathcal{A}_i| \leq \max\left\{ \binom{n}{k} - \sum_{i=0}^{t-1} \binom{k}{i} \binom{n-k}{k-i} + r - 1,\; r \binom{n-t}{k-t} \right\}.
    \]

    \noindent \textbf{\rm(iii)} If $L = [0, k-1]$, then
    \[
    \sum_{i=1}^r |\mathcal{A}_i| \leq \binom{n}{k},
    \]
    and equality holds if and only if $\mathcal{A}_1, \dots, \mathcal{A}_r$ are disjoint and $\bigcup_{i=1}^r \mathcal{A}_i = \binom{[n]}{k}$.

    \noindent \textbf{\rm(iv)} If $k \in L$ and $L \neq [t, k]$ for any $t \in [0,k]$, then
    \[
    \sum_{i=1}^r |\mathcal{A}_i| \leq \sum_{i \in L} \binom{k}{i} \binom{n-k}{k-i} + r - 1,
    \]
    and equality holds if and only if $\mathcal{A}_1 = \dots = \mathcal{A}_{r-1} = \{[k]\}$ and $\mathcal{A}_r = \left\{ A \in \binom{[n]}{k} : |A \cap [k]| \in L \right\}$.
\end{theorem}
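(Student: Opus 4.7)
My plan is to handle parts (i)--(iii) with short arguments and concentrate on (iv). Part (i) is immediate: the hypothesis $L=[0,k]$ makes the cross $L$-intersection constraint vacuous, so $\sum_i|\mathcal{A}_i|\leq r\binom{n}{k}$ with equality iff every $\mathcal{A}_i=\binom{[n]}{k}$. Part (ii) is a direct invocation of Theorem~\ref{t-intersecting}, since $L=[t,k]$ makes pairwise cross $L$-intersecting equivalent to pairwise cross $t$-intersecting. For part (iii), $L=[0,k-1]$ forces $A\neq B$ whenever $A\in\mathcal{A}_i$, $B\in\mathcal{A}_j$ with $i\neq j$, so the families are pairwise disjoint as collections and $\sum_i|\mathcal{A}_i|=|\bigsqcup_i\mathcal{A}_i|\leq\binom{n}{k}$, with equality precisely when they partition $\binom{[n]}{k}$.

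For part (iv) I will proceed by induction on $r\geq 2$. The base case $r=2$ is Theorem~\ref{thm: 2 cross L intersecting}(ii); under the case (iv) hypotheses ($k\in L$, and $L\neq[t,k]$ for any $t$) and for $n$ sufficiently large, the exceptional extremal configurations (2)--(4) of that theorem are ruled out, leaving only configuration (1) as the $r=2$ equality case. Write $S=\sum_{\ell\in L}\binom{k}{\ell}\binom{n-k}{k-\ell}$ and order $|\mathcal{A}_1|\leq\cdots\leq|\mathcal{A}_r|$. The easy branch of the inductive step is $|\mathcal{A}_1|=1$, say $\mathcal{A}_1=\{A_0\}$: the inductive hypothesis applied to the pairwise cross $L$-intersecting $(r-1)$-tuple $\mathcal{A}_2,\ldots,\mathcal{A}_r$ gives $\sum_{i\geq 2}|\mathcal{A}_i|\leq S+r-2$, hence $\sum_i|\mathcal{A}_i|\leq S+r-1$. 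The inductive extremal forces $\mathcal{A}_2=\cdots=\mathcal{A}_{r-1}=\{[k']\}$ and $\mathcal{A}_r=\{A:|A\cap[k']|\in L\}$ for some $k$-subset $[k']$; requiring $\{A_0\}$ to be cross $L$-intersecting with every $B\in\mathcal{A}_r$---and exploiting the gap in $L$---pins down $A_0=[k']$, producing the stated unique extremal up to isomorphism.

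The substantive branch is $|\mathcal{A}_1|\geq 2$, where the goal is the strict inequality $\sum_i|\mathcal{A}_i|<S+r-1$. The key tool is a ``$U$-shrinkage'' estimate: for distinct $A,A'\in\mathcal{A}_1$ with $m=|A\cap A'|<k$, every $B\in\mathcal{A}_j$ for $j\geq 2$ lies in $U(A,A'):=\{B:|B\cap A|,|B\cap A'|\in L\}$, and a direct count splitting $B$ over the partition $(A\cap A')\sqcup(A\setminus A')\sqcup(A'\setminus A)\sqcup([n]\setminus(A\cup A'))$ shows that $|U(A,A')|/S\to\binom{m}{t}/\binom{k}{t}<1$ as $n\to\infty$ when $m\geq t:=\min L$, and $|U(A,A')|=O(n^{k-t-1})$ when $m<t$. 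I then combine this with the inductive hypothesis applied to $\mathcal{A}_2,\ldots,\mathcal{A}_r$: since all $|\mathcal{A}_i|\geq 2$, the inductive equality characterization (which demands all but one family to be a singleton) is precluded, so $\sum_{i\geq 2}|\mathcal{A}_i|$ must lie strictly below $S+r-2$; symmetrically, any distinct $B,B'\in\mathcal{A}_j$ for $j\geq 2$ bounds $|\mathcal{A}_1|$ by $|U(B,B')|$. Iterating these shortfalls, together with $n$ large enough that the $U$-shrinkage is strict, yields the required strict inequality.

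The main obstacle I anticipate is precisely making Case B quantitative. A naive pairwise use of Theorem~\ref{thm: 2 cross L intersecting}(ii) gives only $|\mathcal{A}_i|+|\mathcal{A}_j|\leq S+1$, which sums to about $rS/2$ and is far too weak. What is really needed is to sum the strict gaps $S+1-(|\mathcal{A}_i|+|\mathcal{A}_j|)$ and the $U$-shrinkage losses so that their total covers $|\mathcal{A}_1|-1\geq 1$. Handling all the intermediate sub-cases according to the partition structure (specifically $m\geq t$ versus $m<t$) and the individual sizes of the $\mathcal{A}_j$, and verifying that the exceptional extremal configurations of Theorem~\ref{thm: 2 cross L intersecting}(ii) remain excluded at every intermediate step by taking $n$ sufficiently large, will occupy most of the detailed write-up.
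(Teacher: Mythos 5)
Your treatment of (i)--(iii) and your base case $r=2$ for (iv) (via Theorem \ref{thm: 2 cross L intersecting}(ii), with the exceptional extremal configurations ruled out for large $n$ under the hypotheses $k\in L$, $L\neq[t,k]$) are correct and consistent with the paper. The genuine gap is in the substantive branch $|\mathcal{A}_1|\geq 2$ of your induction on $r$, and it is not merely a matter of ``making Case B quantitative'': the mechanism you propose fails outright when $0\in L$, which is permitted under the hypotheses of (iv) (e.g.\ $L=\{0,k\}$, or the paper's Lemma \ref{3.6} with $L=\{0,2\}$). There $t=\min L=0$ and your shrinkage ratio is $\binom{m}{0}/\binom{k}{0}=1$, not $<1$; indeed $S-|U(A,A')|=|\{B:|B\cap A|\in L,\ |B\cap A'|\notin L\}|=O(n^{k-s})=o(S)$, where $s=\min([0,k]\setminus L)\geq 1$. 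So containment of every $\mathcal{A}_j$ ($j\geq 2$) in $U(A,A')$ yields only $\sum_{j\geq 2}|\mathcal{A}_j|\leq (r-1)(S-o(S))$, which is far above the target $S+r-1$ for $r\geq 3$, while the inductive hypothesis applied to $\mathcal{A}_2,\dots,\mathcal{A}_r$ gains you only the additive constant $1$ from strictness. You must cover a deficit of $|\mathcal{A}_1|-1$, which can be as large as roughly $S/r$ in a putative near-balanced configuration, and nothing in the proposal produces a saving of that order. Even when $0\notin L$, the best the shrinkage gives per family is a factor $\binom{k-1}{t}/\binom{k}{t}=(k-t)/k$, and $r(k-t)/k$ need not be below $1$, so the direct combination still does not close.

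The paper's proof supplies exactly the two missing mechanisms. For $0\in L$ it works with links over $s$-sets where $s=\min([0,k]\setminus L)$: since $[0,s-1]\subseteq L$ and $s\notin L$, any two nonempty links $\hat{\mathcal{A}}_i(T),\hat{\mathcal{A}}_j(T)$ with $i\neq j$ are cross-intersecting, so by Theorem \ref{t-intersecting} either $\sum_i|\mathcal{A}_i(T)|=O(n^{k-s-1})$ or $r-1$ of the links are empty; an averaging argument over all $T\in\binom{[n]}{s}$, an induction on $k$ applied to the families $\mathcal{S}_s(\mathcal{A}_i)$ of popular $s$-sets, and a Kruskal--Katona/Lov\'asz shadow bound (Corollary \ref{cor}) then force $|\bigcup_{i<r}\mathcal{A}_i|=1$. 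For $\min L=\ell_1>0$ it shows each $\mathcal{A}_i$ ($i<r$) is $\ell_1$-intersecting, invokes the Deza--Erd\H{o}s--Frankl theorem to extract a common $\ell_1$-core $X$, propagates $X$ to all families via the Erd\H{o}s matching theorem, and reduces to the $0\in L$ case by deleting $X$. Neither the link/shadow argument nor the core-extraction step appears in your proposal, and without some substitute for them the inductive step cannot be completed.
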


There are several partial results concerning the maximum sizes of $r$-cross $t$-intersecting families, starting with Hilton \cite{hilton1977intersection} and with Hilton and Milner \cite{hilton1967some}. Recently, Gupta, Mogge, Piga and Sch{\"u}lke \cite{gupta2023r} gave the maximum sum of sizes of $r$-cross $t$-intersecting families.
\begin{theorem}[\cite{gupta2023r}]\label{1}
    Let $r\geq 2$, and $n,t\geq 1$ be integers, $k\in [n]$, and for $i\in [r]$ let $\mathcal{A}_i\subseteq\binom{[n]}{k}$. If $\mathcal{A}_1,\dots,\mathcal{A}_r$ are non-empty $r$-cross $t$-intersecting families and $n\geq 2k-t$, then
    $$\sum_{i=1}^r|\mathcal{A}_i|\leq \max_{m\in [t,k]}\left\{ \sum_{i=t}^k\binom{m}{i}\binom{n-m}{k-i}+(r-1)\binom{n-m}{k-m} \right\},$$
    and this bound is attained.
\end{theorem}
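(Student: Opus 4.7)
The plan is to treat Parts~(i)--(iv) in sequence, with (i)--(iii) handled by short arguments and (iv) proved by induction on $r$. Throughout, let $N_L := \sum_{i \in L}\binom{k}{i}\binom{n-k}{k-i}$ and $\mathcal{F}_A := \{B \in \binom{[n]}{k} : |B \cap A| \in L\}$, so $|\mathcal{F}_A| = N_L$ for every $A$.

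For Part~(i), $L=[0,k]$ makes the intersection condition vacuous, so $\mathcal{A}_1 = \cdots = \mathcal{A}_r = \binom{[n]}{k}$ is the unique extremum. Part~(ii) reduces to Theorem~\ref{t-intersecting} verbatim, since for $L=[t,k]$ the notions of pairwise cross $L$- and cross $t$-intersecting coincide. Part~(iii) uses $k \notin L$ to force $\mathcal{A}_i \cap \mathcal{A}_j = \emptyset$ for $i \neq j$ (any common $A$ would give $|A \cap A| = k \notin L$); the families are hence pairwise disjoint and $\sum |\mathcal{A}_i| \leq \binom{n}{k}$ with the stated equality characterization.

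For Part~(iv) I will induct on $r$. The base $r = 2$ is Theorem~\ref{thm: 2 cross L intersecting}~(ii): for $n$ sufficiently large we lie in its part~(ii), and of its four equality patterns only pattern~1 ($\mathcal{A} = \{A\}$, $\mathcal{B} = \mathcal{F}_A$ up to isomorphism) is compatible with Case~(iv)'s hypotheses (pattern~2 requires $L=[0,k-1]$, excluded since $k \in L$; pattern~3 is $k=2, L=\{1,2\}=[1,k]$, which belongs to Case~(ii); pattern~4 needs $k=n-2$, excluded for $n$ large). In the inductive step, order $|\mathcal{A}_1| \leq \cdots \leq |\mathcal{A}_r|$ and split on $|\mathcal{A}_1|$. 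In Sub-case~A ($|\mathcal{A}_1|=1$, $\mathcal{A}_1=\{A\}$), the families $\mathcal{A}_2, \ldots, \mathcal{A}_r$ are pairwise cross $L$-intersecting and lie inside $\mathcal{F}_A$; the inductive hypothesis gives $\sum_{j\geq 2}|\mathcal{A}_j| \leq N_L + r - 2$ and hence $\sum_i |\mathcal{A}_i| \leq N_L + r - 1$. Equality forces (via the IH characterization) $\mathcal{A}_2 = \cdots = \mathcal{A}_{r-1} = \{A'\}$ and $\mathcal{A}_r = \mathcal{F}_{A'}$; the extra cross $L$-intersecting constraint between $\{A\}$ and $\mathcal{F}_{A'}$ demands $\mathcal{F}_{A'} \subseteq \mathcal{F}_A$, hence $\mathcal{F}_A = \mathcal{F}_{A'}$ by equality of sizes, and a short subsidiary argument (for any $A \neq A'$, producing a $k$-set that lies in exactly one of $\mathcal{F}_A, \mathcal{F}_{A'}$ by using the gap in $L$ and $n$ large) gives $A = A'$, yielding the stated extremal structure.

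Sub-case~B ($|\mathcal{A}_1| \geq 2$, so all $|\mathcal{A}_i| \geq 2$) will be the main obstacle. Here the pairwise bound of Theorem~\ref{thm: 2 cross L intersecting}~(ii) tightens to $|\mathcal{A}_i|+|\mathcal{A}_j|\leq N_L$ (its equality pattern requires a singleton, excluded), but this alone only yields $\sum \leq rN_L/2$, which exceeds $N_L+r-1$ once $r \geq 3$ and $N_L$ is large. The plan is to combine the strict pairwise bounds with the containment $\mathcal{A}_j \subseteq \mathcal{F}_{B_1}\cap\mathcal{F}_{B_2}$ (for any distinct $B_1,B_2\in\mathcal{A}_1$), iterate the IH on $\mathcal{A}_2, \ldots, \mathcal{A}_r$ (where equality fails since $|\mathcal{A}_2| \geq 2$), and exploit the gap in $L$ (guaranteed by $L \neq [t,k]$ and $k \in L$) via a careful asymptotic comparison of $|\mathcal{F}_{B_1}\cap\mathcal{F}_{B_2}|$ to $N_L$ as a function of $|B_1\cap B_2| < k$. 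For $n$ sufficiently large in terms of $k, r, L$, these combined estimates should drive the total strictly below $N_L + r - 1$, ruling out Sub-case~B as an extremum and closing the induction.
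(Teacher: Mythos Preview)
Your proposal does not address the stated theorem. The statement you were given is the cited result of Gupta--Mogge--Piga--Sch\"ulke on \emph{$r$-cross $t$-intersecting} families (i.e., $|\bigcap_{i=1}^r A_i| \geq t$ for all choices $A_i \in \mathcal{A}_i$), with the bound $\max_{m\in[t,k]}\{\cdots\}$. The paper does not prove this theorem at all; it is quoted from \cite{gupta2023r} as background and then used to motivate the paper's own Theorem~\ref{thm: cross interval intersecting}. There is therefore no ``paper's own proof'' to compare against.

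What you have actually sketched is a proof of Theorem~\ref{thm: r cross L intersecting}, the paper's result on \emph{pairwise cross $L$-intersecting} families, which has exactly the four-part structure (i)--(iv) you discuss. These are different notions: pairwise cross $L$-intersecting constrains $|A \cap B|$ for $A \in \mathcal{A}_i$, $B \in \mathcal{A}_j$ with $i \neq j$, whereas $r$-cross $t$-intersecting constrains the $r$-fold intersection. Nothing in your write-up touches the quantity $\max_{m\in[t,k]}\bigl\{\sum_{i=t}^k\binom{m}{i}\binom{n-m}{k-i}+(r-1)\binom{n-m}{k-m}\bigr\}$ or the hypothesis $n\ge 2k-t$.

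Even read as an attempt at Theorem~\ref{thm: r cross L intersecting}(iv), your induction-on-$r$ plan has a real gap in Sub-case~B. You correctly note that the pairwise bound only gives $\sum_i|\mathcal{A}_i|\le rN_L/2$, and then assert that combining containments $\mathcal{A}_j\subseteq\mathcal{F}_{B_1}\cap\mathcal{F}_{B_2}$, the strict IH on $\mathcal{A}_2,\ldots,\mathcal{A}_r$, and an ``asymptotic comparison of $|\mathcal{F}_{B_1}\cap\mathcal{F}_{B_2}|$ to $N_L$'' \emph{should} push the total below $N_L+r-1$. But the IH saves only a constant, and you give no actual bound on $|\mathcal{F}_{B_1}\cap\mathcal{F}_{B_2}|$; without that, the gap of order $N_L$ is not closed. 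The paper's proof of (iv) takes a different route: it inducts on $k$ (not $r$) in the case $0\in L$, introducing the auxiliary families $\mathcal{S}_s(\mathcal{A}_i)$ and using a shadow/Kruskal--Katona argument to force $|\bigcup_{i<r}\mathcal{A}_i|=1$, and reduces the case $0\notin L$ to the previous one by extracting a common $\ell_1$-core via Theorems~\ref{thm: frankl intersect} and~\ref{thm: matchings}.
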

Let $L=[\ell,t]$.
We generalize the result about $r$-cross $t$-intersecting families to $r$-cross $L$-intersecting families. By Theorem \ref{1}, we assume $t\not=k$ and obtain the following result.
\begin{theorem}\label{thm: cross interval intersecting} Let $n, k,r$ be positive integers with $ k,r \geq 2$.
   Let  $\mathcal{A}_1,\dots,\mathcal{A}_r\subseteq\binom{[n]}{k}$ be non-empty $r$-cross $[\ell,s-1]$-intersecting families, where $\ell\in [0,k]$ and $s\in[k]$.  When $n$ is large enough, we have
    $$\sum_{i=1}^r|\mathcal{A}_i|\leq(r-1)\binom{n-\ell}{k-\ell}-\sum_{i=s-\ell}^{k-\ell}\binom{k-\ell}{i}\binom{n-k}{k-\ell-i}+1,$$
    and this bound is attained.
\end{theorem}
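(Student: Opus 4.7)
The plan is to exhibit a sharp construction and match it by analysing traces onto a singleton family.

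\textbf{Construction.} Take $\mathcal{A}_1 = \{[k]\}$, $\mathcal{A}_j = \{A \in \binom{[n]}{k} : [\ell] \subseteq A\}$ for $2 \le j \le r-1$, and $\mathcal{A}_r = \{A \in \binom{[n]}{k} : [\ell] \subseteq A,\ |A \cap [k]| \le s-1\}$. For any choice $A_j \in \mathcal{A}_j$ the intersection $\bigcap_{j=1}^{r} A_j$ contains $[\ell]$ and is contained in $A_r \cap [k]$, so its size lies in $[\ell, s-1]$. Vandermonde's identity yields $|\mathcal{A}_j| = \binom{n-\ell}{k-\ell}$ for $2 \le j \le r-1$ and $|\mathcal{A}_r| = \binom{n-\ell}{k-\ell} - \sum_{i=s-\ell}^{k-\ell}\binom{k-\ell}{i}\binom{n-k}{k-\ell-i}$, summing to the claimed value.

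\textbf{Upper bound.} Reorder so $|\mathcal{A}_1| \le \cdots \le |\mathcal{A}_r|$. The families are in particular $r$-cross $\ell$-intersecting, so Theorem \ref{1} gives a preliminary bound, but it is not tight; the refinement exploits the upper constraint $s-1$. The proof splits into two cases. In \emph{Case 1} ($|\mathcal{A}_1| = 1$), write $\mathcal{A}_1 = \{A\}$ and set $\mathcal{T}_j = \{B \cap A : B \in \mathcal{A}_j\} \subseteq 2^A$ for $j \ge 2$. These trace families are $(r-1)$-cross $[\ell, s-1]$-intersecting and $|\mathcal{A}_j| \le \sum_{T \in \mathcal{T}_j}\binom{n-k}{k-|T|}$. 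A structural lemma asserts that, for $n$ large, the maximising trace configuration has a common $\ell$-subset $T^* \subseteq A$ contained in every trace of every $\mathcal{T}_j$, with exactly one $\mathcal{T}_{j_0}$ confined to traces of size at most $s-1$. The restriction is forced by the upper constraint (otherwise picking $T_j = A$ in every $\mathcal{T}_j$ yields intersection $A$ of size $k > s-1$), and confining more than one family is strictly worse by a weight-comparison: each additional restriction removes a term of order $\binom{n-k}{k-\ell}$. Two Vandermonde summations then give the bound $(r-1)\binom{n-\ell}{k-\ell} - \sum_{i=s-\ell}^{k-\ell}\binom{k-\ell}{i}\binom{n-k}{k-\ell-i}$ on $\sum_{j \ge 2}|\mathcal{A}_j|$, and adding $|\mathcal{A}_1| = 1$ completes the case. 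In \emph{Case 2} ($|\mathcal{A}_1| \ge 2$), a stability version of Theorem \ref{1} shows that the near-extremal $r$-cross $\ell$-intersecting configurations are rigid enough that $|\mathcal{A}_1| \ge 2$ causes a deficit of order $\binom{n-k}{k-\ell}$ in some other $|\mathcal{A}_j|$, dominating the extra contribution of $\mathcal{A}_1$.

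\textbf{Main obstacle.} The crux is the structural lemma in Case 1. The delicate point is that distinct trace families might a priori carry distinct $\ell$-subsets; one shows instead that any $\ell$-subset $T^* \in \mathcal{T}_{j_1}$ must be contained in every trace of every other $\mathcal{T}_j$ (using the lower bound $\ell$), and that two distinct $\ell$-subsets in a single $\mathcal{T}_{j_0}$ force every other $\mathcal{T}_j$ to contain their union, collapsing the dominant contribution of weight $\binom{n-k}{k-\ell}$. Once this rigidity is in place, the identification of the extremal configuration and the final binomial identities are routine. The Case 2 stability is conceptually standard but requires quantitative care to ensure the implied deficit survives the large-$n$ regime.
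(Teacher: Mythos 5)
Your construction and the resulting lower bound are correct and coincide with the paper's extremal example. The upper bound, however, rests on two assertions that are exactly where the difficulty of the theorem lies, and neither is proved. First, the ``structural lemma'' of Case 1: your argument for the common $\ell$-subset $T^*$ only gets started if some $\mathcal{T}_{j_1}$ already contains a trace of size exactly $\ell$, and even then it only forces $T^*$ into the families other than $\mathcal{A}_{j_1}$ (and only for $r\ge 3$; for $r=2$ there are no ``other'' families to force). If all traces have size greater than $\ell$, or if a single family carries two distinct $\ell$-sized traces, your sketch does not yield the claimed rigidity: in the latter situation you conclude that the remaining families live over an $(\ell+1)$-set, which bounds them by $O(n^{k-\ell-1})$, but it does not bound $|\mathcal{A}_{j_0}|$ itself, so no contradiction with the target $\Theta(n^{k-\ell})$ follows without further work. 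The paper obtains the common $\ell$-set globally (not just on traces) by first showing that $\mathcal{A}_1,\dots,\mathcal{A}_{r-1}$ are $\ell$-intersecting and that one of them is large, and then invoking the Deza--Erd\H{o}s--Frankl theorem (Theorem \ref{thm: frankl intersect}) together with the Erd\H{o}s matching theorem (Theorem \ref{thm: matchings}); some input of this strength appears unavoidable.

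Second, and more seriously, Case 2 is not a proof: no ``stability version of Theorem \ref{1}'' is stated, proved, or cited, and it is unclear what form it would need to take (note, for instance, that when $s=k$ the deficit caused by $|\mathcal{A}_1|\ge 2$ is only $\Theta(1)$ and must be weighed against a gain of up to $r-1$, so the claim requires genuine quantitative care). The paper avoids any stability input: for $\ell=0$ it runs an induction on $k$, applies the induction hypothesis to the $s$-shadow families $\mathcal{T}_s(\mathcal{A}_i)$ to show that one of them is empty, and then uses the Lov\'asz form of the Kruskal--Katona theorem (Corollary \ref{cor}) to force $|\mathcal{A}_1|=1$; the case $\ell>0$ is reduced to $\ell=0$ by quotienting out the common $\ell$-set. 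The induction on $k$ and the shadow argument are the engine of the proof, and your proposal contains no substitute for them. Finally, even granting $|\mathcal{A}_1|=\{A\}$ and the common $T^*$, the deficit term is not obtained by confining ``exactly one'' family: for each $S\subseteq A$ with $|S|\ge s$ a possibly different family must avoid $S$, and the paper's final double count over the classes $\mathcal{C}_i^t$ is needed to convert this into the stated bound.
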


For a family $\mathcal{F} \subset \binom{[n]}{k}$ and $0 \leq i \leq n$, the \emph{$i$-shadow} $\partial_i(\mathcal{F})$ is defined by
\(
\partial_i(\mathcal{F}) = \left\{G \in \binom{[n]}{i} : G \subset F \text{ for some } F \in \mathcal{F}\right\}.
\)
If $\mathcal{F}=\{F\}$, we simply write $\partial_i(F)$ for $\partial_i(\mathcal{F})$. For given positive integers $m, k, i$ with $k>i$, the problem of determining the minimum size of $\partial_i(\mathcal{F})$ over all families $\mathcal{F} \subset \binom{[n]}{k}$ of size $m$ was completely solved by Kruskal~\cite{kruskal} and Katona~\cite{Katona}. This result is of fundamental importance in combinatorics and will be useful in our proofs. In fact, we only require a slightly weaker version due to Lov\'{a}sz~\cite{lovasz1979combinatorial}. For any real number $x > k-1$, define $\binom{x}{k} = x(x-1)\cdots(x-k+1)/k!$. Thus, for every positive integer $m$ and fixed $k$, there exists a unique $x > k-1$ such that $\binom{x}{k} = m$.

\begin{theorem}[\cite{lovasz1979combinatorial}]\label{shadow}
Let $1 \leq i < k \leq n$ and $\mathcal{F} \subset \binom{[n]}{k}$ with $|\mathcal{F}| \ge \binom{x}{k}$. Then
$$
|\partial_{i}(\mathcal{F})| \geq \binom{x}{i}.
$$
\end{theorem}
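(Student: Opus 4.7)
The plan is to follow the classical Kruskal--Katona--Lov\'asz approach: first iterate a one-level shadow inequality down to $i$, then establish that one-level inequality by shifting and a cascade-form computation.

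\textbf{Step 1 (Reduction to $i=k-1$).} I would first show it suffices to prove $|\partial_{k-1}(\mathcal{F})|\ge\binom{x}{k-1}$ whenever $|\mathcal{F}|\ge\binom{x}{k}$. Granted this, let $y>k-2$ be the unique real number with $\binom{y}{k-1}=|\partial_{k-1}(\mathcal{F})|$; since $|\partial_{k-1}(\mathcal{F})|\ge\binom{x}{k-1}$, we get $y\ge x$. Applying the one-level bound to $\partial_{k-1}(\mathcal{F})\subset\binom{[n]}{k-1}$ then yields $|\partial_{k-2}(\partial_{k-1}(\mathcal{F}))|\ge\binom{y}{k-2}\ge\binom{x}{k-2}$. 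Since every $i$-subset of a set in $\mathcal{F}$ is contained in some $(k-1)$-subset of that set, one checks $\partial_i(\mathcal{F})=\partial_i(\partial_{k-1}(\mathcal{F}))$ for all $i\le k-1$, so induction on $k-i$ completes the reduction.

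\textbf{Step 2 (Compression).} To handle $i=k-1$, apply the $(a,b)$-shift operator for each pair $1\le a<b\le n$: replace each $F\in\mathcal{F}$ with $b\in F$ and $a\notin F$ by $(F\setminus\{b\})\cup\{a\}$ whenever the result is not already in $\mathcal{F}$. A routine verification shows the shift preserves $|\mathcal{F}|$ and does not increase $|\partial_{k-1}(\mathcal{F})|$. After iterating to a fixed point and completing the standard cascading argument, we may assume $\mathcal{F}$ is the initial segment of the colexicographic order on $\binom{[n]}{k}$ of size $|\mathcal{F}|$.

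\textbf{Step 3 (Cascade computation).} Write $|\mathcal{F}|$ in its unique Macaulay (cascade) form
\[
|\mathcal{F}|=\binom{a_k}{k}+\binom{a_{k-1}}{k-1}+\cdots+\binom{a_t}{t},\qquad a_k>a_{k-1}>\cdots>a_t\ge t\ge 1.
\]
A direct count on the colex initial segment gives
\[
|\partial_{k-1}(\mathcal{F})|=\binom{a_k}{k-1}+\binom{a_{k-1}}{k-2}+\cdots+\binom{a_t}{t-1}.
\]
The remaining task is the arithmetic lemma: if $\binom{x}{k}\le\binom{a_k}{k}+\binom{a_{k-1}}{k-1}+\cdots+\binom{a_t}{t}$, then $\binom{x}{k-1}\le\binom{a_k}{k-1}+\binom{a_{k-1}}{k-2}+\cdots+\binom{a_t}{t-1}$. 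The main tool is the identity $\binom{y}{k-1}/\binom{y}{k}=k/(y-k+1)$, which is strictly decreasing in $y$, together with a case split on whether $x\le a_k$ or $x>a_k$.

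The main obstacle is the arithmetic lemma in Step 3: the underlying binomial identities are elementary, but the cascade layering has to be tracked carefully, and one must handle the boundary where $x$ is non-integer and lies strictly between two consecutive values of the top index $a_k$, so that the classical integer Kruskal--Katona inequality has to be smoothly interpolated to the real parameter $x$.
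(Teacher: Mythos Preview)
The paper does not prove this statement at all: Theorem~\ref{shadow} is quoted from Lov\'asz~\cite{lovasz1979combinatorial} as a known result and is used as a black box (only its immediate consequence, Corollary~\ref{cor}, receives a short argument). So there is no proof in the paper to compare your proposal against.

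That said, your outline is a recognisable route to the Lov\'asz form of Kruskal--Katona, but Step~2 hides a real gap. Iterating $(a,b)$-shifts to a fixed point produces a \emph{shifted} (left-compressed) family, not the colex initial segment; the passage from ``shifted'' to ``colex initial segment'' is essentially the full Kruskal--Katona theorem itself, so invoking it as ``the standard cascading argument'' is circular at the level of a self-contained proof. The cleaner direct argument for the Lov\'asz version stays with a shifted family $\mathcal{F}$, splits it as $\mathcal{F}_0=\{F\in\mathcal{F}:1\notin F\}$ and $\mathcal{F}_1=\{F\setminus\{1\}:1\in F\in\mathcal{F}\}$, uses the key fact that shiftedness forces $\partial_{k-1}\mathcal{F}_0\subseteq\mathcal{F}_1$, and then inducts on $k$ with the real parameter $x$ built in from the start; this bypasses both the colex reduction and the delicate cascade arithmetic of your Step~3. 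If you keep your current plan, you will need to supply a genuine proof that the shadow is minimised at the colex initial segment before Step~3 becomes relevant.
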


The above theorem establishes that the size of a family $\mathcal{F}$ yields a lower bound on the size of its $i$-shadow. Conversely, the size of the $i$-shadow of $\mathcal{F}$ implies an upper bound on the size of $\mathcal{F}$ itself. This leads naturally to the following corollary.

\begin{cor}\label{cor}
    Let $1 \leq i < k \leq n$ and $\mathcal{F} \subset \binom{[n]}{k}$ with $|\partial_i\mathcal{F}| \leq \binom{x}{i}$. Then
$$
|\mathcal{F}| \leq \binom{x}{k}.
$$
\end{cor}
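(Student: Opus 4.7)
The plan is to derive Corollary \ref{cor} as the contrapositive of Theorem \ref{shadow}. Recall the convention $\binom{y}{j}=y(y-1)\cdots(y-j+1)/j!$ extended to real $y$. Since $\binom{y}{k}$ is a polynomial in $y$ with roots $0,1,\ldots,k-1$ and positive leading coefficient $1/k!$, it is continuous and strictly increasing on $[k-1,\infty)$, taking the value $0$ at $y=k-1$ and tending to $\infty$. The same monotonicity holds for $y\mapsto\binom{y}{i}$ on $[k-1,\infty)\supseteq[i-1,\infty)$. In particular, for any positive integer $m$ there is a unique real $y>k-1$ with $\binom{y}{k}=m$, and larger $m$ corresponds to larger $y$. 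The hypothesis of the corollary tacitly requires $x>k-1$ (otherwise $\binom{x}{i}$ need not even be a meaningful upper bound), and we will use this throughout.

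Next I would argue by contradiction. Assume $|\mathcal{F}|>\binom{x}{k}$; the trivial case $\mathcal{F}=\emptyset$ already gives the conclusion, so we may assume $|\mathcal{F}|\ge 1$. Choose the unique real $y>k-1$ with $\binom{y}{k}=|\mathcal{F}|$. From $\binom{y}{k}=|\mathcal{F}|>\binom{x}{k}$ and the strict monotonicity above, we obtain $y>x$, and consequently $\binom{y}{i}>\binom{x}{i}$. Now apply Theorem \ref{shadow} with the parameter $y$ in place of $x$: the hypothesis $|\mathcal{F}|\ge \binom{y}{k}$ is met with equality, so $|\partial_i(\mathcal{F})|\ge \binom{y}{i}>\binom{x}{i}$, contradicting $|\partial_i(\mathcal{F})|\le \binom{x}{i}$. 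Hence $|\mathcal{F}|\le \binom{x}{k}$, as required.

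There is essentially no obstacle: the corollary is simply the contrapositive of Kruskal--Katona--Lov\'asz, packaged in a form that will be convenient later when the authors need to upper-bound $|\mathcal{F}|$ from a shadow estimate. The only mildly technical point is the monotonicity of the generalized binomial coefficient $\binom{y}{j}$ on $[k-1,\infty)$, which is immediate from its polynomial expression and the fact that all of its roots lie at or below $k-1\ge j-1$.
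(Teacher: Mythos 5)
Your proposal is correct and is essentially identical to the paper's own proof: both argue by contradiction, write $|\mathcal{F}|=\binom{y}{k}$ for a real $y>x$, and apply Theorem \ref{shadow} to get $|\partial_i(\mathcal{F})|\ge\binom{y}{i}>\binom{x}{i}$. You simply spell out the monotonicity of the generalized binomial coefficient, which the paper leaves implicit.
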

\begin{proof}
    Suppose $|\mathcal{F}| > \binom{x}{k}$, say $|\mathcal{F}| = \binom{x_0}{k}$ with $x_0>x$. By Theorem \ref{shadow}, $
|\partial_{i}(\mathcal{F})| \geq \binom{x_0}{i}>\binom{x}{i}
$,  a contradiction.
\end{proof}
This paper is organised as follows. In Section 2, we will prove Theorem \ref{thm: 2 cross L intersecting}. In Section 3, we will prove Theorem \ref{thm: r cross L intersecting}, and in Section 4, we will give the proof of Theorem \ref{thm: cross interval intersecting}. Some open problems are given in Section 5.
\section{Cross $L$-intersecting families}
Let $X$ be a finite set and $\Gamma$ be a group transitively acting on $X$. We say that the action of $\Gamma$ on $X$ is \textbf{primitive}, if $\Gamma$ preserves no nontrivial partition of $X$. In any other case, the action of $\Gamma$ is \textbf{imprimitive}. It is easy to see that if the action of $\Gamma$ on $X$ is transitive and imprimitive, then there is a subset $B$ of $X$ such that $1<|B|<|X|$ and $\gamma(B)\cap B=B$ or $\emptyset$ for every $\gamma\in \Gamma$. In this case, $B$ is called an \textbf{imprimitive set} in $X$. Otherwise, $B$ is called a \textbf{primitive set} in $X$. Furthermore, a subset $B$ of $X$ is said to be \textbf{semi-imprimitive} if $1 < |B| < |X|$ and $|\gamma(B) \cap B| = 0$, $1$ or $|B|$ for each $\gamma \in \Gamma$.

Let $G(X,Y)$ be a non-complete bipartite graph and let $A\cup B$ be a nontrivial independent set of $G(X,Y)$ (that is, $A\cup B\not\subseteq X$ and $A\cup B\not\subseteq Y$), where $A\subseteq X$ and $B\subseteq Y$. If $A\subseteq X\setminus N(B)$ and $B\subseteq Y\setminus N(A)$, we have
$$|A|+|B|\leq \max\left\{|A|+|Y|-|N(A)|,|B|+|X|-|N(B)|\right\}.$$ We set $$\epsilon(X)=\min\left\{|N(A)|-|A|:~A\subseteq X \text{~and $N(A)\neq Y$}\right\},$$
and
$$\epsilon(Y)=\min\{|N(B)|-|B|:~B\subseteq Y \text{~and $N(B)\neq X$}\}.$$
Let $\alpha(X,Y)$ denote the  independent number of $G(X,Y)$. Then one sees that
$$\alpha(X,Y)=\max\{|Y|-\epsilon(X),|X|-\epsilon(Y)\}.$$
A subset $A$ of $X$ is called a \textbf{fragment} of $G(X,Y)$ in $X$ if $N(A)\neq Y$ and $|N(A)|-|A|=\epsilon(X).$ We denote $\mathcal{F}(X)$ as the set of all fragments in $X$. Similarly we can define $\mathcal{F}(Y)$. Let $\mathcal{F}(X,Y)=\mathcal{F}(X)\cup \mathcal{F}(Y)$. $ A \in \mathcal{F}(X,Y)$ is  called a $k$-fragment if $|A| = k$.
 A bipartite graph $G(X,Y)$ is called \textbf{part-transitive} if there is a group $\Gamma$ that transitively acts on $X$ and $Y$ respectively and preserves the adjacent relationship. If $G(X,Y)$ is part-transitive, then every two vertices $u_1,u_2\in X$ (resp. $v_1,v_2\in Y$) has the same degree and we use $d(X)$ (resp. $d(Y)$) to denote the degree of vertices in $X$ (resp. $Y$).

We need the following result about the  part-transitive bipartite graphs which proved in \cite{WANG2013129}.
\begin{theorem}[\cite{WANG2013129}]\label{thm: bipartite}
Let $ G(X, Y) $ be a non-complete bipartite graph with $ |X| \leq |Y| $. If $ G(X, Y) $ is part-transitive and every fragment of $ G(X, Y) $ is primitive under the action of a group $ \Gamma $, then
$$ \alpha(X, Y) = |Y| - d(X) + 1. $$
Moreover,
\begin{enumerate}
    \item if $ |X| < |Y| $, then $ X $ has only 1-fragments;
    \item if $ |X| = |Y| $, then each fragment in $ X $ has size 1 or $ |X| - d(X) $ unless there is a semi-imprimitive fragment in $ X $ or $ Y $.
\end{enumerate}
\end{theorem}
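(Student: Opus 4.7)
The plan is to carry out the classical fragment-and-atom argument for neighborhoods in bipartite graphs, coupling the submodular inequality
\[
|N(A\cap A')|+|N(A\cup A')|\le |N(A)|+|N(A')|
\]
(which is an identity for the union and a trivial bound for the intersection in any bipartite graph) with the fact that $\Gamma$ preserves sizes and adjacency, so that the images $\gamma(A)$ of a fragment $A$ are themselves fragments of the same size and the same deficiency.

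First I would record the trivial bounds. Since $G(X,Y)$ is not complete and part-transitive, for any $x\in X$ we have $|N(\{x\})|=d(X)<|Y|$, hence $\epsilon(X)\le d(X)-1$; by the same token $\epsilon(Y)\le d(Y)-1$. Combining the double-counting identity $|X|d(X)=|Y|d(Y)$ with $|X|\le|Y|$ gives $|Y|-d(X)+1\ge |X|-d(Y)+1$, with strict inequality whenever $|X|<|Y|$. Consequently, $\alpha(X,Y)=\max\{|Y|-\epsilon(X),\,|X|-\epsilon(Y)\}$ reduces to $|Y|-d(X)+1$ as soon as I can show $\epsilon(X)=d(X)-1$, together with $\epsilon(Y)=d(Y)-1$ in the equal-size case.

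Next I would run the minimal-fragment argument. Assume for contradiction that there is $A\in\mathcal{F}(X)$ with $|A|\ge 2$, and choose one of minimum cardinality. Writing $\mathrm{def}(S)=|N(S)|-|S|$, the submodular inequality becomes
\[
\mathrm{def}(A\cap\gamma(A))+\mathrm{def}(A\cup\gamma(A))\le 2\epsilon(X)
\]
for every $\gamma\in\Gamma$. Whenever $A\cap\gamma(A)\ne\emptyset$ and $N(A\cup\gamma(A))\ne Y$, both terms on the left are at least $\epsilon(X)$, forcing equality: $A\cap\gamma(A)$ is then a fragment of size at most $|A|$, which by minimality collapses to $\gamma(A)=A$. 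Thus for every $\gamma\in\Gamma$ one of the following occurs: $\gamma(A)=A$, $\gamma(A)\cap A=\emptyset$, or $N(A\cup\gamma(A))=Y$. If only the first two alternatives ever arise, then $A$ is an imprimitive block under $\Gamma$, contradicting the primitivity hypothesis; so the third alternative must occur for some $\gamma$, and this is where the analysis splits into the two cases of the theorem.

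Finally I would extract the structural conclusions. When $|X|<|Y|$, a counting check on the third alternative (using $|A\cup\gamma(A)|\le 2|A|$ together with $|N(A)|=|A|+\epsilon(X)\le|A|+d(X)-1$, which would force $N(A\cup\gamma(A))$ to be strictly smaller than $Y$) rules it out, leaving only $|A|=1$ and proving part (1); plugging $\epsilon(X)=d(X)-1$ into the formula from the first step yields $\alpha(X,Y)=|Y|-d(X)+1$. When $|X|=|Y|$, the third alternative is compatible with $|A|\ge 2$, but I would transfer the argument to the complement $B:=Y\setminus N(A)$, which is non-empty and behaves as a fragment in $Y$ of deficiency $\epsilon(X)$ by symmetry. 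Running the atom argument for $B$ and its $\Gamma$-translates yields either $|B|=1$, giving $|A|=|Y|-1-\epsilon(X)=|X|-d(X)$ as in the statement, or a translate with $|\gamma(B)\cap B|=1$, which is exactly the semi-imprimitive exception. The main obstacle is precisely the residual case $N(A\cup\gamma(A))=Y$, where the submodular inequality is silent on $A\cap\gamma(A)$; handling it requires passing carefully between fragments in $X$ and their complements in $Y$, and it is there that the semi-imprimitive exception in (2) unavoidably appears.
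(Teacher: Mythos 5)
This theorem is not proved in the paper at all: it is imported verbatim from \cite{WANG2013129} and used as a black box, so there is no in-paper argument to measure your proposal against. Your outline does follow the same strategy as the original source — submodularity of the deficiency $|N(S)|-|S|$, a minimal fragment moved around by $\Gamma$, primitivity to force a nontrivial overlap, and the involution $\phi(A)=Y\setminus N(A)$ — so the skeleton is the right one. But as it stands the proposal is a sketch with the hard steps either deferred or incorrect, not a proof.

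Concretely: (a) the ``counting check'' you invoke to kill the residual case $N(A\cup\gamma(A))=Y$ when $|X|<|Y|$ does not work. The bound $|N(A\cup\gamma(A))|\le |N(A)|+|N(\gamma(A))|=2\bigl(|A|+\epsilon(X)\bigr)$ can easily reach $|Y|$, since a fragment can be as large as $\alpha(X,Y)-1$; nothing forces $N(A\cup\gamma(A))\subsetneq Y$ by size alone. The actual mechanism (this is Lemma \ref{lem: bipartite} of the paper, with its hypothesis $|A|\le|\phi(A)|$) is the identity $Y\setminus N(A\cup\gamma(A))=\phi(A)\cap\gamma(\phi(A))$: one works with whichever of $A$, $\phi(A)$ is the smaller half of the pair and controls this intersection, rather than using a union bound on neighborhoods. (b) In the case $|X|=|Y|$ you explicitly stop at what you call the main obstacle; the value $|X|-d(X)$ for nontrivial fragments and the appearance of the semi-imprimitive exception are asserted, not derived. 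Note also that exhibiting one $\gamma$ with $|\gamma(B)\cap B|=1$ does not make $B$ semi-imprimitive — that requires $|\gamma(B)\cap B|\in\{0,1,|B|\}$ for \emph{every} $\gamma$, which needs another pass of the minimality argument to exclude intermediate intersection sizes. (c) Conclusion (1) claims $X$ has \emph{only} $1$-fragments, which is strictly stronger than $\epsilon(X)=d(X)-1$; your minimal-fragment argument yields the latter but does not by itself exclude a large fragment coexisting with singleton ones, so an additional argument (again via $\phi$ and the asymmetry $|X|<|Y|$) is needed there too.
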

Theorem \ref{thm: bipartite} completely solves the case where every fragment of $ G(X, Y) $ is primitive under the action of the group $ \Gamma $. However, we must also consider situations where some fragments are imprimitive.

A bijection $\phi: \mathcal{F}(X, Y) \to \mathcal{F}(X, Y)$ is defined by
$$
\phi(A) =
\begin{cases}
Y \setminus N(A) & \text{if } A \in \mathcal{F}(X), \\
X \setminus N(A) & \text{if } A \in \mathcal{F}(Y).
\end{cases}
$$
The following properties are proposed in \cite{WANG2013129}. The mapping $\phi$ is an involution, satisfying $\phi(\phi(A)) = A$ for all $A \in \mathcal{F}(X, Y)$. Furthermore, it satisfies the size relation $|A| + |\phi(A)| = \alpha(X, Y)$. A fragment $A$ is called \textbf{balanced} if $|A| = |\phi(A)|$. It follows immediately that all balanced fragments have size $\frac{1}{2}\alpha(X, Y)$.
\begin{lemma}[\cite{WANG2013129}]\label{lem: bipartite}
    Let $G(X, Y)$ be a non-complete and part-transitive bipartite graph under the action of a group $\Gamma$. Suppose that $A \in \mathcal{F}(X, Y)$ such that $\emptyset \neq \gamma(A)\cap A \neq A$ for some $\gamma \in \Gamma$. If $|A| \leq |\phi(A)|$, then $A \cup \gamma(A)$ and $A \cap \gamma(A)$ are both in $\mathcal{F}(X, Y)$.
\end{lemma}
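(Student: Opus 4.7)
The plan is to exploit submodularity of the bipartite neighborhood function, reducing the lemma to a single numerical inequality in which the hypothesis $|A|\le|\phi(A)|$ will enter exactly once. By symmetry I would assume $A\subseteq X$, and write $\epsilon=\epsilon(X)=|N(A)|-|A|$. Since the $\Gamma$-action preserves the bipartition together with the adjacency relation, $\gamma(A)$ lies in $X$ and satisfies $N(\gamma(A))=\gamma(N(A))$, so $\gamma(A)$ is itself a fragment in $X$ with excess $\epsilon$.

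Set $U=A\cup\gamma(A)$ and $I=A\cap\gamma(A)$; the hypothesis $\emptyset\neq\gamma(A)\cap A\neq A$ guarantees $I\neq\emptyset$ and $I\subsetneq A$. The standard submodular relations in the bipartite graph give
\[
|N(U)|+|N(I)|\;\le\;|N(A)|+|N(\gamma(A))|=2|N(A)|,\qquad |U|+|I|=2|A|.
\]
First I would observe that $N(I)\subseteq N(A)\neq Y$, so $I$ is automatically eligible to be a fragment. The entire lemma therefore comes down to showing $N(U)\neq Y$; this is the step where the hypothesis $|A|\le|\phi(A)|$ enters, and will be the main obstacle.

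To rule out $N(U)=Y$, I would argue by contradiction. If $|N(U)|=|Y|$, the submodular inequality gives $|N(I)|\le 2|N(A)|-|Y|$; combined with $|N(I)|\ge |I|+\epsilon$ (which holds because $N(I)\neq Y$ and $I\neq\emptyset$) this rearranges to $|I|\le |N(A)|+|A|-|Y|$. Here the assumption $|A|\le|\phi(A)|=|Y|-|N(A)|$ is precisely $|N(A)|+|A|\le|Y|$, so I would conclude $|I|\le 0$, contradicting $I\neq\emptyset$.

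Once $N(U)\neq Y$ is established the conclusion drops out: I would combine the excess inequalities $|N(U)|\ge|U|+\epsilon$, $|N(I)|\ge|I|+\epsilon$, the identity $|U|+|I|=2|A|=2|N(A)|-2\epsilon$, and the submodular bound $|N(U)|+|N(I)|\le 2|N(A)|$, which together force both excess inequalities to be equalities. Hence $|N(U)|-|U|=|N(I)|-|I|=\epsilon$, so both $U$ and $I$ belong to $\mathcal{F}(X)\subseteq\mathcal{F}(X,Y)$, completing the argument. The only subtle point is the exclusion of $N(U)=Y$; everything else is bookkeeping around submodularity.
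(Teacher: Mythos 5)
Your proof is correct. Note that the paper itself states this lemma as an imported result from \cite{WANG2013129} and gives no proof of it; your argument --- submodularity of the neighborhood function $|N(U)|+|N(I)|\le|N(A)|+|N(\gamma(A))|$ combined with the excess lower bound $|N(\cdot)|-|\cdot|\ge\epsilon(X)$, with the hypothesis $|A|\le|\phi(A)|$ used exactly once to exclude $N(A\cup\gamma(A))=Y$ --- is the standard one and matches how the result is established in the cited source.
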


 Define $\mathcal{I}(X)$ {\rm(} resp. $\mathcal{I}(Y)${\rm)} to be the collection of all the imprimitive fragments in $X$ {\rm(}resp. $Y${\rm)}.
Then we have the following result.
\begin{theorem}\label{thm: bipartite 2}
Let $ G(X, Y) $ be a non-complete bipartite graph with $ |X| = |Y| $. If $ G(X, Y) $ is part-transitive  under the action of a group $\Gamma$ and $\alpha(X, Y) > |Y| - d(X) + 1$, then there exists an imprimitive fragment in $X$.
Moreover, each fragment in $ X $ of size no more than $\frac{1}{2}\alpha(X, Y)$ is in $\mathcal{I}(X)$ unless there is a primitive fragment $A$ of size no more than $\frac{1}{2}\alpha(X, Y)$ in $ X $ such that $A\cap\gamma(A)\in \mathcal{I}(X)\cup\{\emptyset\}\cup \{A\}$ for any $\gamma\in\Gamma$.
\end{theorem}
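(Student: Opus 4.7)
The plan is to prove the ``moreover'' assertion first by a minimum-counterexample argument, and then to deduce the existence of an imprimitive fragment in $X$ by contradiction.

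For the ``moreover'' assertion, I suppose there exists a primitive fragment in $\mathcal{F}(X)$ of size at most $\tfrac{1}{2}\alpha(X,Y)$ (otherwise the conclusion holds vacuously), and choose such an $A$ of minimum size. I claim $A$ serves as the exceptional primitive fragment, i.e.\ $A\cap\gamma(A)\in \mathcal{I}(X)\cup\{\emptyset\}\cup\{A\}$ for every $\gamma\in\Gamma$. The case $|A|=1$ is immediate. For $|A|\ge 2$, primitivity yields some $\gamma\in\Gamma$ with $\emptyset\neq\gamma(A)\cap A\neq A$; since $|A|\le\tfrac{1}{2}\alpha(X,Y)\le|\phi(A)|$, Lemma~\ref{lem: bipartite} gives $A\cap\gamma(A)\in\mathcal{F}(X,Y)$, and hence $A\cap\gamma(A)\in\mathcal{F}(X)$ as it lies in $X$. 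Were $A\cap\gamma(A)$ outside $\mathcal{I}(X)\cup\{\emptyset,A\}$ for some $\gamma$, it would be a strictly smaller primitive fragment in $\mathcal{F}(X)$ of size at most $\tfrac{1}{2}\alpha(X,Y)$, contradicting the minimality of $A$.

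For the existence assertion, I argue by contradiction: assume $\mathcal{I}(X)=\emptyset$. Any primitive fragment $A\in\mathcal{F}(X)$ with $2\le|A|\le\tfrac{1}{2}\alpha(X,Y)$ would, by the ``moreover'' assertion and $\mathcal{I}(X)=\emptyset$, satisfy $A\cap\gamma(A)\in\{\emptyset,A\}$ for every $\gamma\in\Gamma$, making $A$ itself imprimitive and contradicting $\mathcal{I}(X)=\emptyset$. Hence every fragment of $\mathcal{F}(X)$ of size at most $\tfrac{1}{2}\alpha(X,Y)$ has size $1$; this forces $\epsilon(X)=d(X)-1$ and so $|Y|-\epsilon(X)=|Y|-d(X)+1$. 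When $\alpha(X,Y)=|Y|-\epsilon(X)$ we obtain the immediate contradiction $\alpha(X,Y)=|Y|-d(X)+1$.

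The main obstacle is the remaining case $\alpha(X,Y)=|X|-\epsilon(Y)>|Y|-\epsilon(X)$, in which fragments of $\mathcal{F}(X)$ may all have size exceeding $\tfrac{1}{2}\alpha(X,Y)$ and the direct argument fails. To handle it, I would run the symmetric analysis on $Y$: either $\epsilon(Y)=d(Y)-1$ (which, with $|X|=|Y|$ and the equality $d(X)=d(Y)$ forced by edge-counting, again yields $\alpha=|Y|-d(X)+1$), or there is an imprimitive fragment $B\in\mathcal{I}(Y)$, which I would then transfer across the $\Gamma$-equivariant involution $\phi$ to obtain an imprimitive fragment in $X$. The transfer is the most delicate step: since $\phi$ is not cardinality-preserving, the block structure of $B$ does not pass directly to $\phi(B)=X\setminus N(B)$, and one must verify that the $\Gamma$-orbit $\{\phi(\gamma(B)):\gamma\in\Gamma\}$ still partitions $X$ into blocks. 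I expect this to follow from a double-counting argument that combines $|X|=|Y|$, the block decomposition of $Y$ under $\Gamma\cdot B$, and the regularity imposed by the part-transitive action.
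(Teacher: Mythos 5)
Your treatment of the ``moreover'' assertion is correct and is essentially the paper's own argument: both proofs take a minimal fragment $A$ of size at most $\frac{1}{2}\alpha(X,Y)$ that is not imprimitive, observe $|A|\le|\phi(A)|$, and invoke Lemma~\ref{lem: bipartite} to conclude that every proper nonempty intersection $A\cap\gamma(A)$ is a strictly smaller fragment, hence lies in $\mathcal{I}(X)$ by minimality. (Your explicit handling of the $|A|=1$ case is if anything slightly more careful than the paper's.)

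The existence assertion is where the proposal has a genuine gap. Your contradiction argument from $\mathcal{I}(X)=\emptyset$ reduces, as you yourself acknowledge, to the case $\alpha(X,Y)=|X|-\epsilon(Y)>|Y|-\epsilon(X)$, where every fragment of $X$ exceeds $\frac{1}{2}\alpha(X,Y)$; the symmetric analysis on $Y$ then at best produces an imprimitive fragment $B\in\mathcal{I}(Y)$, and the step that would finish the proof --- transferring $B$ to an imprimitive fragment of $X$ --- is only conjectured (``I expect this to follow from a double-counting argument''). That step is not merely unproven but doubtful as formulated: although $\phi$ is $\Gamma$-equivariant, from $B\cap\gamma(B)=\emptyset$ one only gets $\phi(B)\cap\phi(\gamma(B))=X\setminus N\bigl(B\cup\gamma(B)\bigr)$, which has no reason to be empty, so the block structure of $B$ does not pass to $\phi(B)$. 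The paper avoids this entire difficulty: it derives the first assertion in one line from the contrapositive of Theorem~\ref{thm: bipartite} --- if every fragment of $G(X,Y)$ were primitive, that theorem would force $\alpha(X,Y)=|Y|-d(X)+1$, contradicting the hypothesis --- so an imprimitive fragment exists immediately. (The localization of that fragment to $X$ rather than $Y$, which is exactly the point you are wrestling with, is also left implicit by the paper, resting on the symmetry of the two sides when $|X|=|Y|$; but the paper at least never needs your unproven transfer.) As written, your proposal establishes the ``moreover'' clause but not the existence clause; the fix is to obtain the existence claim directly from Theorem~\ref{thm: bipartite} rather than rebuilding it from the ``moreover'' statement.
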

\begin{proof} By Theorem \ref{thm: bipartite} and $\alpha(X, Y)>  |Y| - d(X) + 1$,    there exists an imprimitive fragment in $X$.


     Assume  there exists a fragment, say $A$, in $X$ such  that $|A|<\frac{1}{2}\alpha(X, Y)$ and $A\notin \mathcal{I}(X)$. We choose  $A$ such that $|A|$ as small as possible.
    Since $\alpha(X, Y) > |Y| - d(X) + 1$, $|A|\ge 2$.  Since $A\notin \mathcal{I}(X)$, $A$ is primitive. For  any $\gamma\in\Gamma$ with $\emptyset \neq \gamma(A)\cap A \neq A$, since $|A|<\frac{1}{2}\alpha(X, Y)$, we have $|A|\le |\phi(A)|$.
  By   Lemma \ref{lem: bipartite}, $\gamma(A)\cap A$ is a fragment. From the minimality of $A$, $\gamma(A)\cap A\in \mathcal{I}(X)$, which ends the proof.
\end{proof}

To determine the extremal cases, we also need the following properties.
\begin{lemma}[\cite{WANG2013129}]\label{lem: bipartite 2}
    Let $G(X, Y)$ be a non-complete bipartite graph with $|X| = |Y|$ and $\epsilon(X) = d(X) - 1$, and let $\Gamma$ be a group acting part-transitively on $G(X, Y)$. If there exists a 2-fragment in $X$, then either
\begin{enumerate}
    \item there exists an imprimitive set $A \subseteq X$ satisfying $|N(A)| - |A| = d(X) - 1$, or

    \item there exists a subset $A \subseteq X$ {\rm(}where either $A = X$ or $A$ is $\Gamma$-imprimitive with $|A| > 2${\rm)} such that the quotient group $\Gamma_A / \left(\bigcap_{a \in A} \Gamma_a\right)$ is isomorphic to a subgroup of the dihedral group $D_{|A|}$. Here, $\Gamma_A = \{\sigma \in \Gamma \mid \sigma(A) = A\}$ denotes the stabilizer of $A$.
\end{enumerate}
\end{lemma}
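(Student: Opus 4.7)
The plan is to dichotomize on whether the given 2-fragment is itself a block of imprimitivity. Fix a 2-fragment $A_0 = \{a_1, a_2\}$ in $X$, so $|N(A_0)| - |A_0| = d(X) - 1$. If $\gamma(A_0)\cap A_0\in\{\emptyset, A_0\}$ holds for every $\gamma \in \Gamma$, then $A_0$ itself is an imprimitive set realising the required deficiency, and conclusion (1) holds with $A=A_0$. Hence we may assume that $A_0$ is primitive, so some $\gamma_0\in\Gamma$ satisfies $|\gamma_0(A_0)\cap A_0|=1$.

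From here I would build a larger structure via the $\Gamma$-orbit of $A_0$. Form the auxiliary graph $H$ on vertex set $X$ whose edges are the $\Gamma$-translates of $A_0$; all of these are 2-fragments, since the action preserves neighbourhoods and hence fragment deficiencies. Then $\Gamma$ acts on $H$ as a vertex- and edge-transitive group of automorphisms, so $H$ is regular, and its connected components form a $\Gamma$-invariant partition of $X$. I would first dispose of the case in which this partition is nontrivial: each connected component $C$ is an imprimitive set, and I would show that $C$ itself is a fragment with $|N(C)|-|C|=d(X)-1$ by chaining overlapping $\Gamma$-translates of $A_0$ inside $C$ and iterating Lemma \ref{lem: bipartite} (after verifying $|A_0|\le |\phi(A_0)|$, which holds as soon as $\alpha(X,Y)\geq 4$; the few small degenerate configurations are handled separately). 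This produces conclusion (1).

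It remains to treat the case in which $H$ is connected. Here I would let $A$ be the smallest proper $\Gamma$-imprimitive block containing $A_0$ if one exists, and otherwise set $A=X$. Within $A$, the edge-transitivity of $H$ combined with the primitivity of the 2-fragment $A_0$ forces the $H$-edges inside $A$ to form a single cycle through $A$: repeated application of $\gamma_0$ produces a sequence $a_1,\, a_2=\gamma_0(a_1),\, a_3=\gamma_0(a_2),\dots$ of distinct vertices joined consecutively by $H$-edges, and the minimality of $A$ together with edge-transitivity rules out additional edges. Since the automorphisms of a cycle are exactly the dihedral group, the induced action of $\Gamma_A$ on $A$ is contained in $D_{|A|}$, i.e.\ $\Gamma_A/\bigcap_{a\in A}\Gamma_a$ embeds in $D_{|A|}$, yielding conclusion (2).

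The principal obstacle is the final structural claim that the restriction of $H$ to $A$ is a single cycle: in general an edge-transitive regular graph need not be a cycle, so the argument must exploit the specific interaction between the fragment deficiency equation $|N(A_0)|-|A_0|=d(X)-1$, the minimality of $A$ as an imprimitive block, and the primitivity of the 2-fragment $A_0$, in order to rule out higher-valency alternatives. A careful case analysis showing that any additional $H$-edge inside $A$ either produces a strictly smaller imprimitive block (contradicting minimality) or forces a second 2-fragment meeting $A_0$ in exactly one point in a way that makes $A_0$ imprimitive (contradicting the assumption of this case) will close the argument.
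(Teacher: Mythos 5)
This lemma is imported verbatim from \cite{WANG2013129}; the paper gives no proof of it, so your proposal can only be judged against the argument in that reference, whose overall architecture --- the auxiliary graph $H$ whose edge set is the $\Gamma$-orbit of the 2-fragment, the dichotomy on whether the translates pairwise meet in $0$ or $2$ points, and the identification of a block with a cycle acted on dihedrally --- you have correctly reproduced. The difficulty is that you have left the mathematical heart of the lemma unproved. Conclusion (2) stands or falls with the claim that $H$ restricted to the relevant block is a single cycle, equivalently that every vertex lies in at most two translates of $A_0$; vertex- and edge-transitivity give only regularity, as you yourself concede. The degree bound has to be extracted from the fragment calculus: if $\{a,b_1\},\{a,b_2\},\{a,b_3\}$ were three distinct edges at $a$, one must combine the identity $|N(\{a,b_i\})|=d(X)+1$ (each $b_i$ contributes exactly one neighbour outside $N(a)$) with Lemma \ref{lem: bipartite} applied to unions and intersections of these 2-fragments to contradict $\epsilon(X)=d(X)-1$. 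You name this step as ``the principal obstacle'' and defer it to ``a careful case analysis'' that you do not carry out, so the proposal does not actually establish the statement. A secondary flaw in the same step: iterating a single $\gamma_0$ does not produce the cycle, since the $\langle\gamma_0\rangle$-orbit of $a_1$ need not exhaust $A$ and the edges $\{\gamma_0^i(a_1),\gamma_0^i(a_2)\}$ need not be consecutively incident unless $\gamma_0(a_1)=a_2$; the cycle must instead come from the degree-two property together with connectivity.

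The disconnected case also has a gap. To show a component $C$ is a fragment of deficiency $d(X)-1$ you iterate Lemma \ref{lem: bipartite}, but that lemma merges a fragment $A$ with a translate of $A$ itself (not of $A_0$) and requires $|A|\le|\phi(A)|$, i.e.\ $|A|\le\frac{1}{2}\alpha(X,Y)$, at every application. When $H$ has few components, $|C|$ is comparable to $|X|$ and can exceed $\frac{1}{2}\alpha(X,Y)$ (already for two components when $\alpha(X,Y)=|X|-d(X)+1$ and $d(X)\ge2$), so the growing fragment leaves the regime where Lemma \ref{lem: bipartite} applies before it covers $C$; your parenthetical about ``small degenerate configurations'' addresses only small $\alpha(X,Y)$, not this failure mode. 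Finally, taking $A$ to be the smallest imprimitive block containing $A_0$, rather than the $H$-component of $A_0$ (which is automatically a block), leaves open the unaddressed possibility that $H$ restricted to $A$ is disconnected, in which case it cannot be a single cycle.
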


\begin{lemma}[\cite{WANG2013129}]\label{lem: bipartite 3}
Let $G(X, Y)$ satisfy the hypotheses of Lemma \ref{lem: bipartite 2}. If every fragment of $G(X, Y)$ is primitive and $\mathcal{F}(X, Y)$ contains no 2-fragments, then
    every nontrivial fragment $A \in \mathcal{F}(X)$ (if any exists) is balanced, and
     for each $a \in A$, there exists a unique nontrivial fragment $B$ such that $A \cap B = \{a\}$.
\end{lemma}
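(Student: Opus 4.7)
My plan is to prove the two conclusions in turn, each via a minimality argument combined with repeated application of Lemma \ref{lem: bipartite}.

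For the first conclusion, I would argue by contradiction. Suppose there exists a nontrivial fragment $A \in \mathcal{F}(X,Y)$ with $|A| \neq |\phi(A)|$; by replacing $A$ with $\phi(A)$ if needed, we may assume $|A| < |\phi(A)|$, and take such $A$ of minimum size. Since $\mathcal{F}(X,Y)$ contains no $2$-fragments and $A$ is nontrivial, $|A| \geq 3$. Because $A$ is primitive and nontrivial, some $\gamma \in \Gamma$ satisfies $\emptyset \neq \gamma(A) \cap A \neq A$; using $|A| \leq |\phi(A)|$, Lemma \ref{lem: bipartite} gives that $A' := \gamma(A) \cap A$ is a fragment. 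Since $A' \subsetneq A$ implies $\phi(A') \supseteq \phi(A)$, we have $|\phi(A')| \geq |\phi(A)| > |A| > |A'|$, so $A'$ would be a nontrivial unbalanced fragment of smaller size unless $|A'| \in \{1,2\}$; the no $2$-fragment hypothesis leaves $|A'| = 1$. Thus $|\gamma(A) \cap A| \in \{0, 1, |A|\}$ for every $\gamma$, so $A$ is semi-imprimitive. To close the contradiction, I would apply Lemma \ref{lem: bipartite} again to $A \cup \gamma(A)$ (a fragment of size $2|A|-1$ when the intersection has size $1$) and iterate the union construction along the orbit of $A$, exploiting part-transitivity and $\epsilon(X) = d(X)-1$ to force either a $2$-fragment or an imprimitive fragment in $\mathcal{F}(X,Y)$.

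For the second conclusion, the first yields $|A| = \tfrac{1}{2}\alpha(X,Y)$. For existence, primitivity of $A$ provides $\gamma_0 \in \Gamma$ with $\emptyset \neq \gamma_0(A) \cap A \neq A$; Lemma \ref{lem: bipartite} together with the first conclusion and the no $2$-fragment hypothesis forces $|\gamma_0(A) \cap A| = 1$, say $\gamma_0(A) \cap A = \{b_0\}$, so that $B_0 := \gamma_0(A)$ is a nontrivial fragment meeting $A$ in a single point. For an arbitrary prescribed $a \in A$ the natural candidate is $B := \sigma\gamma_0(A)$ with $\sigma \in \Gamma_A$ satisfying $\sigma(b_0) = a$, which then gives $A \cap B = \sigma(\gamma_0(A) \cap A) = \{a\}$; this requires checking that $\Gamma_A$ acts transitively on $A$, which I would extract from the semi-imprimitive-like structure of the $\Gamma$-orbit of $A$ (any two distinct translates meet in $0$ or $1$ points, so the translates of $A$ through a fixed point of $A$ form a pencil on which $\Gamma_A$ acts). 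For uniqueness, if $B$ and $B'$ are both nontrivial fragments with $A \cap B = A \cap B' = \{a\}$, then Lemma \ref{lem: bipartite} applied to $\{B, B'\}$ (both balanced of size $\tfrac{\alpha}{2}$) makes $B \cap B'$ a fragment containing $a$; the first conclusion and the absence of $2$-fragments restrict $|B \cap B'| \in \{1, \tfrac{\alpha}{2}\}$, and pairing this with the fragment status of $B \cup B'$ and the size identity $|B| + |B'| = |B \cap B'| + |B \cup B'|$ forces $B = B'$.

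The main obstacle is the closing step of the first conclusion: converting the semi-imprimitive structure of a hypothetical minimal unbalanced $A$ into an honest contradiction with either "every fragment is primitive" or "no $2$-fragments." The intuition is that the $\Gamma$-orbit of such an $A$ is a partial linear space (distinct translates meet in $0$ or $1$ points), and combining part-transitivity with $\epsilon(X) = d(X)-1$ should produce two vertices of $X$ whose neighbourhoods overlap in exactly $d(X)-1$ vertices — that is, a $2$-fragment. The same structural description of the orbit is also what underlies the transitivity of $\Gamma_A$ on $A$ that I use in the existence half of the second conclusion.
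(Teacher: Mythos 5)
First, a caveat on the comparison itself: the paper does not prove this lemma at all --- it is quoted from \cite{WANG2013129} and used as a black box --- so there is no in-paper proof to measure your attempt against, and I can only assess the argument on its own terms.

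Your skeleton (minimal counterexample plus repeated use of Lemma \ref{lem: bipartite}) is reasonable, and the reduction showing that a minimal unbalanced nontrivial fragment $A$ must satisfy $|\gamma(A)\cap A|\in\{0,1,|A|\}$ for all $\gamma\in\Gamma$ is correct. But the proof is incomplete at all three decisive points. (1) The closing contradiction for balancedness is only gestured at: you say the orbit structure ``should'' yield a $2$-fragment. Semi-imprimitivity of $A$ cannot by itself be the source of the contradiction, because semi-imprimitive nontrivial fragments genuinely occur under exactly these hypotheses --- the paper's own Case 2 exhibits one, namely $\{\{1,i\}:i\in[2,n]\}$ for $k=2$, $L=\{1,2\}$, and it is balanced. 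What must be contradicted is $|A|<\tfrac12\alpha(X,Y)$, and no argument for that is supplied; the iteration of unions along the orbit has to be carried out explicitly, tracking when the union ceases to be a fragment because its neighbourhood becomes all of $Y$. (2) The existence half of the second conclusion rests on $\Gamma_A$ acting transitively on $A$, which you assert but do not prove; it does not follow from ``distinct translates meet in at most one point,'' and without it your candidate $B=\sigma\gamma_0(A)$ is unavailable for a general $a\in A$. (3) The uniqueness half applies Lemma \ref{lem: bipartite} to two fragments $B,B'$ that need not be $\Gamma$-translates of one another, whereas that lemma is stated for $A$ and $\gamma(A)$; this is repairable via the submodularity of $S\mapsto|N(S)|$, but even then the conclusion does not follow as written: if $|B\cap B'|=1$ then $|B\cup B'|=\alpha(X,Y)-1$, and a fragment of that size is simply $X\setminus N(y)$ for some $y\in Y$, i.e.\ a legitimate trivial fragment, so the size identity $|B|+|B'|=|B\cap B'|+|B\cup B'|$ does not force $B=B'$. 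Excluding two distinct balanced fragments through the same point of $A$ requires a further argument that brings $A$ itself back into play (for instance via $A\cup B\cup B'$), and that argument is missing.
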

Now we are going to prove Theorem \ref{thm: 2 cross L intersecting}. The main idea comes from \cite{WANG2013129}.
\vskip.2cm

\noindent\text{\em Proof of Theorem \ref{thm: 2 cross L intersecting}.}
Let $\mathcal{A},\mathcal{B}\subseteq \binom{[n]}{k}$ be non-empty  cross $L$-intersecting.

We construct  a bipartite graph $G(X,Y)$, where  $X = Y = \binom{[n]}{k}$ and for $A \in X$ and $B \in Y$,  $AB \in E(G)$ if and only if $|A \cap B| \notin L$, meaning $A$ and $B$ are not $L$-intersecting.

Let $S_n$ denote the symmetric group on $[n]$. The group $S_n$ acts transitively on both $X$ and $Y$ while preserving the cross $L$-intersecting property. Consequently,   for any $A \in X$, $ |N_G(A)|=d(X)$ and similarly $|N_G(B)|=d(Y)$ for all $B \in Y$. Let $\overline{L} = \{0,1,\dots,k\} \setminus L$. For any $A \in X$ and $B\in Y$,
$$N_G(A) = \bigcup_{i \in \overline{L}} \left\{B \in \binom{[n]}{k} : |A \cap B| = i\right\}\mbox{~~and~~} N_G(B) = \bigcup_{i \in \overline{L}} \left\{A \in \binom{[n]}{k} : |B \cap A| = i\right\}.$$
  Thus we have
$d(X) = d(Y) = \sum_{i \in \overline{L}} \binom{k}{i}\binom{n-k}{k-i}.$ Moreover, $ |\mathcal{A}| + |\mathcal{B}|\le \alpha(X,Y)$, and the equality holds if and only if   $\mathcal{A}\in\mathcal{F}(X)$ and $\mathcal{B}=Y\setminus N(\mathcal{A})$. Consequently, we transform the original problem into determining $\alpha(X,Y)$ and $\mathcal{F}(X,Y)$.

It is known that for $n \neq 2k$, the action of $S_n$ on $\binom{[n]}{k}$ is primitive \cite{WANG2011455}, making every fragment of $G(X,Y)$ primitive. For $n \geq 2k+1$ with $L \neq [0,k]$, or when $k \leq n < 2k$ and $[2k-n,k] \cap L \notin\{ \emptyset,[2k-n,k]\}$,  $G(X,Y)$ is not  complete bipartite. By Theorem \ref{thm: bipartite}, we obtain
\begin{align}\label{eq1}
    \alpha(X,Y) &= |Y| - d(X) + 1 = \binom{n}{k} - \sum_{i \in \overline{L}} \binom{k}{i}\binom{n-k}{k-i} + 1\nonumber\\
    &=\sum_{i \in L} \binom{k}{i}\binom{n-k}{k-i} + 1.
\end{align}

Now, we handle the case $n = 2k$. According to \cite{WANG2011455}, the only imprimitive sets are complementary pairs $\{A, \overline{A}\}$ where $A \in \binom{[2k]}{k}$ and $\overline{A} = [n] \setminus A$. Let $A,\overline{A}\in X$. When $L = k-L$, for any such pair we have
$$\alpha(X,Y) \geq |Y| - |N(\{A, \overline{A}\})| + 2 = |Y| - d(X) + 2 > |Y| - d(X) + 1.$$
By Theorem \ref{thm: bipartite 2}, $\{A, \overline{A}\}$ forms an imprimitive fragment for some $A \in \binom{[n]}{k}$. So we have
\begin{align}\label{eq2}
\alpha(X,Y) = |Y| - d(X) + 2 = \sum_{i \in L} \binom{k}{i}\binom{n-k}{k-i} + 2.
\end{align}
When $L \neq k-L$, we have that $|Y| - |N(\{A, \overline{A}\})| + 2 < |Y| - d(X) + 1$. So $\{A, \overline{A}\}$ cannot be a fragment which implies every fragment of $ G(X, Y) $ is primitive under the action of  $ S_n $. By Theorem \ref{thm: bipartite},
\begin{align}\label{eq3}
    \alpha(X,Y) = |Y| - d(X) + 1
    =\sum_{i \in L} \binom{k}{i}\binom{n-k}{k-i} + 1.
\end{align}

We complete the proof by considering three cases.
\begin{mycase}

\case $n\geq2k,L=[0,k]$ or $n<2k,[2k-n,k]\subseteq L$.

In these cases,  $G(X,Y)$ is an empty graph. Then $|\mathcal{A}| + |\mathcal{B}|\leq \alpha(X,Y)=2\binom{n}{k}$, with equality when $\mathcal{A} = \mathcal{B} = \binom{[n]}{k}$.

\case $n>2k,L\neq [0,k]$ or $n=2k,L\neq k-L$ or $n<2k,[2k-n,k]\cap L\notin\{ \emptyset,[2n-k,k]\}$.

In these cases, $|\mathcal{A}| + |\mathcal{B}|\leq \alpha(X,Y)=\sum_{i \in L} \binom{k}{i}\binom{n-k}{k-i} + 1$ by (\ref{eq1}) and (\ref{eq3}), and the equality holds if and only if   $\mathcal{A}\in\mathcal{F}(X)$ and $\mathcal{B}=Y\setminus N(\mathcal{A})$. By Theorem \ref{thm: bipartite},  the equality holds if $|\mathcal{A}|=1$ or $|\mathcal{B}|=1$ unless there is a semi-imprimitive fragment in $\mathcal{F}(X)$.
We now proceed to characterize all extremal cases by determining all semi-imprimitive fragment in $\mathcal{F}(X)$.

If $|\mathcal{A}|=1$ or $|\mathcal{B}|=1$, say $|\mathcal{A}|=1$, then we let $\mathcal{A}=\{[k]\},\mathcal{B}=\{F\in \binom{[n]}{k}:|F\cap [1,k]|\in L\}$ and we are done.

If $n\geq2k,L=[0,k-1]$ or $n< 2k,L\cap [2k-n,k]=[2k-n,k-1]$, then for any $\emptyset \neq S\subsetneqq \binom{[n]}{k}$, we have $S\in \mathcal{F}(X,Y)$. Then we let $\mathcal{A}=\overline{\mathcal{B}}$ and we are done.

Now, we assume $n\geq2k,L\neq[0,k-1]$ or $n< 2k,L\cap [2k-n,k]\neq[2k-n,k-1]$, and assume that $S\in\mathcal{F}(X)$ is semi-imprimitive. Then $2\le |S|< \binom{n}{k}$. We first have the following claim.
\begin{claim}\label{Claim fra1}
    There is a $k$-set $C\in S$, such that either $|C \cap B| = 1$ for every $B \in S\setminus\{C\}$, or $|C \cap B| = k - 1$ for every $B \in S\setminus\{C\}$.
\end{claim}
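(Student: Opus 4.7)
My plan is to exploit the semi-imprimitivity of $S$ by analysing the action of the stabiliser of a chosen element. Fix $C\in S$ and let $\mathrm{Stab}(C)=\mathrm{Sym}(C)\times\mathrm{Sym}([n]\setminus C)\le S_n$; its orbits on $\binom{[n]}{k}$ are $\mathcal{O}_j=\{B:|B\cap C|=j\}$ for $0\le j\le k$. Set $S_j=S\cap\mathcal{O}_j$ and $H_C=\{\gamma\in\mathrm{Stab}(C):\gamma(S)=S\}$. Since $C\in\gamma(S)\cap S$ for every $\gamma\in\mathrm{Stab}(C)$, semi-imprimitivity forces $|\gamma(S)\cap S|\in\{1,|S|\}$, so either $\gamma\in H_C$, or $\gamma(S)\cap S=\{C\}$ and $\gamma$ pushes $S\setminus\{C\}$ entirely into $\binom{[n]}{k}\setminus S$.

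From this dichotomy I first show that each non-empty $S_j$ is one single $H_C$-orbit of $\mathcal{O}_j$: if $B_1,B_2\in S_j$ lay in distinct $H_C$-orbits, transitivity of $\mathrm{Stab}(C)$ on $\mathcal{O}_j$ would yield $\gamma\in\mathrm{Stab}(C)\setminus H_C$ with $\gamma(B_1)=B_2$, giving $\{C,B_2\}\subseteq\gamma(S)\cap S$, a contradiction. Next, to reduce to a single index $j$ and force $j\in\{1,k-1\}$, I would combine this orbit description with the fragment equality $|N(S)|=|S|+d(X)-1$. Since $|N(C)|=d(X)$, the fragment condition demands that each $B\in S\setminus\{C\}$ contribute at most one new vertex to $N(S)\setminus N(C)$; I would compute this new-neighbour count for each candidate pair $(j,\text{$H_C$-orbit})$ and use it to eliminate all but the target patterns.

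The hard part will be identifying which $j$ and which $H_C$-orbits respect the one-new-neighbour budget. The admissible patterns I expect to recover are the sunflower $\{T\cup\{i\}:i\in[n]\setminus T\}$ with $|T|=k-1$ and the complete family $\binom{X}{k}$ with $|X|=k+1$, both giving $|C\cap B|=k-1$ for every $B\neq C$ (case~2 of the claim), together with their $j=1$ duals. Ruling out intermediate indices $2\le j\le k-2$ is the technical heart: there $\mathcal{O}_j$ is genuinely two-dimensional in both $C$ and $[n]\setminus C$, and a direct count using the images of $B\in S_j$ under transpositions $(x\,y)$ with $x\in C$, $y\notin C$ shows the new-neighbour count necessarily exceeds one. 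The degenerate case $j=0$ is excluded because pairwise disjoint configurations meet the fragment equality only under $n=2k$ with $L=k-L$, which are precisely the hypotheses excluded from case~(ii) of Theorem~\ref{thm: 2 cross L intersecting}. Having pinned down $j\in\{1,k-1\}$ for a single admissible orbit, any $C\in S$ serves to witness the claim.
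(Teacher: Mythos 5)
Your opening moves coincide with the paper's: you fix $C\in S$, pass to $\Gamma^C=S_C\times S_{\overline{C}}$, and use semi-imprimitivity to obtain the dichotomy ``either $\gamma$ stabilises $S$ or $\gamma(S)\cap S=\{C\}$,'' from which your single-orbit observation for each non-empty $S_j$ follows correctly. But the step that actually proves the claim --- forcing $|C\cap B|\in\{1,k-1\}$ uniformly over $B\in S\setminus\{C\}$ --- is only a plan, and its key premise is wrong. The fragment equality gives $|N(S)\setminus N(C)|=|S|-1$ \emph{in total}; it does not yield a per-element budget ``each $B$ contributes at most one new vertex to $N(S)\setminus N(C)$,'' because the sets $N(B)\setminus N(C)$ may overlap (for instance, $|S|-1=2$ with contributions $\{a,b\}$ and $\{a\}$ is perfectly consistent with the equality, yet one $B$ contributes two new vertices). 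Even granting some corrected version of the budget, you never carry out the promised computations: the counts $|N(B)\setminus N(C)|$ depend on $L$, $n$, $k$ and $j$ simultaneously, and the elimination of $2\le j\le k-2$, the reduction to a single index $j$, and the exclusion of $j=0$ are all asserted (``I would compute,'' ``I expect to recover'') rather than proved; your stated reason for discarding $j=0$ is not an argument.

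The paper closes exactly this gap by group theory rather than counting. Using Lemma \ref{lem: bipartite 2} to rule out $2$-fragments and then Lemma \ref{lem: bipartite 3} (uniqueness of the nontrivial fragment meeting $S$ in $\{C\}$), it shows $[\Gamma^C:\Gamma^C_S]=2$ for a suitable $C\in S$; the only index-$2$ subgroups of $S_C\times S_{\overline{C}}$ are $A_C\times S_{\overline{C}}$ and $S_C\times A_{\overline{C}}$; a transposition inside $B\cap C$ (resp.\ $B\cap\overline{C}$) of size at least $2$ fixes both $B$ and $C$ and hence lies in $\Gamma^C_S$, pinning $\Gamma^C_S$ down to one of the two subgroups. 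Consequently at most one of $|B\cap C|,|B\cap\overline{C}|$ can exceed $1$, uniformly in $B$, and $B\subseteq\overline{C}$ is impossible because $S_C\times S_B$ lies in neither subgroup. If you wish to keep your counting route you would need to replace the per-element budget by the correct union statement and actually perform the $L$-dependent neighbourhood computations; as written, the proposal does not establish the claim.
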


\noindent{\bf Proof of Claim \ref{Claim fra1}}
All fragments in this case are primitive, and $S_n$ is not isomorphic to a subgroup of $D_{|X|}$ for $n \geq 4$. By Lemma \ref{lem: bipartite 2}, there are no $2$-fragments in $G(X,Y)$, which implies that $S$ is balanced by Lemma \ref{lem: bipartite 3}.

For each subset $C \subseteq [n]$, let $S_C=\{\sigma \in S_n :  \sigma \mbox{~fix all elements of~} \overline{C}\}$. Given  $C \in S$, let $\Gamma^C = S_C \times S_{\overline{C}}$, $\Gamma^C_S = \{\sigma \in \Gamma^C : \sigma(S) = S\}$, $\Gamma_S = \{\sigma \in S_n : \sigma(S) = S\}$. Then $C \in \sigma(S)$ for each $\sigma \in \Gamma^C_S$. Since $|S|\ge 2$, there exists some $C\in S$ for which $\Gamma^C_S \neq \Gamma^C$. (Otherwise, we have $S_C \times S_{\overline{C}}=\Gamma^C_S\subseteq \Gamma_S, \Gamma^B=S_B \times S_{\overline{B}}=\Gamma^B_S\subseteq \Gamma_S$ for any $B \in S \backslash \{C\}$. Since a complementary pair is not a fragment by directly calculation, assume $B\neq \overline{C}$. Then, $S_C \times S_{\overline{C}}$ and $S_B \times S_{\overline{B}}$ will generate $S_n$, which implies $\Gamma_S=S_n$ and $S=\binom{[n]}{k}$, a contradiction with $S$ being a fragment.) Lemma \ref{lem: bipartite 3} then shows that $[\Gamma^C : \Gamma^C_S]$, the index of $\Gamma^C_S$ in $\Gamma^C$, equals 2.

Let $\Gamma_S [C]$ denote the projection of $\Gamma^C_S$ onto $S_C$. Then $\Gamma_S [C]$ is a subgroup of $S_C$ with index $\leq 2$, meaning $\Gamma_S [C] = S_C$ or $A_C$, where $A_C$ is an alternating group on $C$. Consequently, the only index-2 subgroups of $\Gamma^C$ are $A_C \times S_{\overline{C}}$ and $S_C \times A_{\overline{C}}$. So $\Gamma^C_S$ must be one of them.

For each $B \in S \backslash \{C\}$, we have $k = |B \cap C| + |B \cap \overline{C}|$. If $|B \cap C| > 1$ (resp. $|B \cap \overline{C}|>1$), let $(i,j)$ be a transposition with $i, j \in B \cap C$ (resp. $i, j \in B \cap \overline{C}$), then $(i,j)$ fixes both $B$ and $C$ (resp. $\overline{C}$). By the semi-imprimitivity of $S$, we have $(i,j) \in \Gamma^C_S$, which forces $\Gamma^C_S = S_C \times A_{\overline{C}}$ (resp. $\Gamma^C_S =A_C \times S_{\overline{C}}$). This process reveals that for each $B \in S\setminus\{C\}$, at most one of $|B \cap C|$ and $|B \cap \overline{C}|$ exceeds 1.

If $B \subseteq \overline{C}$, then both $S_C$ and $S_B$ fix $C$ and $B$, implying $S_C \times S_B \subseteq \Gamma^C_S$. However, neither $A_C \times S_{\overline{C}}$ nor $S_C \times A_{\overline{C}}$ contains $S_C \times S_B$, a contradiction. We thus conclude that either $|C \cap B| = 1$ for every $B \in S\setminus\{C\}$, or $|C \cap B| = k - 1$ for every $B \in S\setminus\{C\}$.
\q

We now characterize all nontrivial fragments of $G(X,Y)$ and the corresponding set $L$. By Claim \ref{Claim fra1}, let $C\in S$ such that either $|C \cap B| = 1$ for every $B \in S\setminus\{C\}$, or $|C \cap B| = k - 1$ for every $B \in S\setminus\{C\}$.

Assume $|C \cap B| = 1$ for every $B \in S\setminus\{C\}$. Without loss of generality, assume $C \cap B = \{1\}$ for some $B \in S$. If $k > 2$, then $|B \cap \overline{C}| \geq 2$, so $\Gamma^C_S = A_C \times S_{\overline{C}}$. However, we can find distinct $i, j \in C\setminus\{1\}$ such that $(1 i j)(B) = (B \setminus \{1\}) \cup \{i\} \in S$ since $(1 i j) \in A_C$. Consequently, $(1 i)(S)$ contains more than one element of $S$, so $(1 i) \in \Gamma^C_S$, leading to a contradiction. This proves $k = 2$, meaning $S$ consists of all $2$-subsets $\{1, i\}$ for $i \in [2, n]$. Since $L \subseteq [0,k]$, $L$ must be one of $\{0\},\{1\},\{2\},\{0,1\},\{0,2\},\{1,2\}$. Computing $d(X)$ and $N_{G}(S)$ while satisfying $|N_{G}(S)| - |S| = d(X) - 1$ reveals that $L=\{1,2\}$. In this case, we let $\mathcal{A}=\mathcal{B}=\{F\in\binom{[n]}{2}:1\in F\}$ and we are done.

Next, assume $|C \cap B| = k - 1 > 1$ for every $B \in S\setminus\{C\}$. Similarly, we can show that $n - k = 2$, $L\cap [k-2,k]= \{k-1,k\}$, and $\Gamma^C_S =  S_C \times A_{\overline{C}}$. Thus, $S = \binom{A}{n-2}$, where $A$ is an $(n-1)$-subset of $[n]$. Therefore, $\mathcal{A}=\mathcal{B}=\binom{A}{n-2}$ is the extremal structure, where $A$ is an $(n-1)$-subset of $[n]$.

\case$n=2k$, $L=k-L\neq [0,k]$.

In this case, $|\mathcal{A}| + |\mathcal{B}|\leq \alpha(X,Y)=\sum_{i \in L} \binom{k}{i}\binom{n-k}{k-i} + 2$ by (\ref{eq2}). If $\mathcal{A}=\{A,\overline{A}\}$ or $\mathcal{B}=\{B,\overline{B}\}$ (i.e.,  a complementary pair), the equality holds by the discussion above, where $A,B\in \binom{[n]}{k}$. Thus complementary pairs are in $\mathcal{F}(X)=\mathcal{F}(Y)$. Let $S\in \mathcal{F}(X)$. Then $|S|\le \frac{1}{2}\alpha(X,Y)$.

Since $\alpha(X,Y)=\sum_{i\in L}\binom{k}{i}\binom{n-k}{k-i}+2$, by the discussion above, $|S|\ge 2$. If $|S|=2$ for any $S\in \mathcal{F}(X)$, then $S$ is  a complementary pair. Let $\mathcal{A}=\{[k],[k+1,2k]\},\mathcal{B}=\{F\in \binom{[n]}{k}:|F\cap [k]|\in L\}$ and we are done.

Now we assume there is $S\in \mathcal{F}(X)$ such that $|\phi(S)|\geq|S|\ge3$. Choose $S$ such that $|S|$ as smaller as possible.
 Note that for any $\gamma\in S_n$, $\gamma(S)\cap S$ is either empty, equal to $S$, or a complementary pair (otherwise, $\gamma(S)\cap S$ would be a fragment smaller than $S$ by Lemma \ref{lem: bipartite}, a contradiction with the choice of $S$). The following properties hold for $S$.

\begin{claim}\label{claim fra2}
\rm{(i)} For any $A\in S$, $\overline{A}\in S$.

\noindent\rm{(ii)} $S$ consists of 2 or 3 different pairs of complementary $k$-sets. Moreover, if $S$ consists of 3 such pairs, it must have the form $\{A_1\cup A_2,A_1\cup A_3,A_1\cup A_4,A_2\cup A_3,A_2\cup A_4,A_3\cup A_4\}$, where $\{A_i\}^4_{i=1}$ is a balanced partition of $[2k]$ with $|A_i|=\frac{k}{2}$.
\end{claim}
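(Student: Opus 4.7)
Both parts use a detailed analysis of the stabiliser $\Gamma_S \le S_n$ and the given constraint $\gamma(S)\cap S \in \{\emptyset, S, \{C,\overline C\}\}$.

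For (i), suppose for contradiction that some $A\in S$ has $\overline A\notin S$. For any transposition $\tau=(i,j)$ with $\{i,j\}\subseteq A$ (or $\subseteq\overline A$), $\tau$ fixes $A$, so $A\in\tau(S)\cap S$; this intersection cannot be the complementary pair $\{A,\overline A\}$ since $\overline A\notin S$, so it equals $S$, i.e.\ $\tau\in\Gamma_S$. Hence $S_A\times S_{\overline A}\subseteq \Gamma_S$, and $S$ is a union of $(S_A\times S_{\overline A})$-orbits: $S=\{A\}\cup\bigcup_{i\in I}\mathcal O_i(A)$ with $I\subseteq[1,k-1]$ and $\mathcal O_i(A)=\{C:|C\cap A|=i\}$. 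Since $|\mathcal O_i(A)|=\binom{k}{i}^2\ge 4$ for $k\ge 2$ and $i\in[1,k-1]$, either $I=\emptyset$ (giving $|S|=1$, excluded) or $|S|\ge 5$. Pick $B\in\mathcal O_i(A)\subseteq S$; then $B\ne A,\overline A$. If also $\overline B\notin S$, the same argument gives $S_B\times S_{\overline B}\subseteq\Gamma_S$, and $\langle S_A\times S_{\overline A},S_B\times S_{\overline B}\rangle=S_n$ (two symmetric groups whose supports cover $[n]$ with nonempty intersection generate $S_n$), forcing $S=\binom{[n]}{k}$, contradicting properness. Hence $\overline B\in S$ for every $B\in S\setminus\{A\}$, and $I=k-I$.

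The complementation $\psi\colon C\mapsto\overline C$ is an automorphism of $G(X,Y)$ (being $S_n$-equivariant and preserving $|A\cap B|$ for $n=2k$), so $\psi(S)\in\mathcal F(X)$ with $|\psi(S)|=|S|$; since $\overline A\notin S$ but $\overline A\in\psi(S)$, $\psi(S)\ne S$, and $\emptyset\ne\psi(S)\cap S=S\setminus\{A\}\ne S$. Applying Lemma~\ref{lem: bipartite} in the extended automorphism group $\langle S_n,\psi\rangle$ (under which $G(X,Y)$ remains non-complete and part-transitive) to $\gamma=\psi$ yields that $S\cap\psi(S)=S\setminus\{A\}$ is a fragment. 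A direct calculation gives $|\phi(S\setminus\{A\})|=\alpha(X,Y)-|S|+1\ge |S|-1$ (using $|S|\le\alpha(X,Y)/2$), so $S\setminus\{A\}$ satisfies $|\phi|\ge|\cdot|$ as well. Since $|S\setminus\{A\}|=|S|-1\ge 4\ge 3$ and is strictly smaller than $|S|$, this contradicts the minimality of $S$.

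For (ii), write $S=\bigsqcup_{j=1}^m\{A_j,\overline{A_j}\}$, $m\ge 2$, and pass to the quotient $\mathbb P=\binom{[n]}{k}/\psi$: $S$ corresponds to $\mathcal S\subseteq\mathbb P$ of size $m$, and the minimality constraint becomes $|\gamma\mathcal S\cap\mathcal S|\in\{0,1,m\}$ for every $\gamma\in S_n$. For distinct $P_1,P_2\in\mathcal S$, any $\gamma\in H:=\mathrm{Stab}(P_1)\cap\mathrm{Stab}(P_2)$ satisfies $|\gamma\mathcal S\cap\mathcal S|\ge 2$, forcing the value $m$, so $H\subseteq\Gamma_{\mathcal S}$. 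Setting $Q_1=A_1\cap A_2$, $Q_2=A_1\cap\overline{A_2}$, $Q_3=\overline{A_1}\cap A_2$, $Q_4=\overline{A_1}\cap\overline{A_2}$ with sizes $(s,k-s,k-s,s)$ where $s=|A_1\cap A_2|$, the group $H$ contains $\prod_{i=1}^4 S_{Q_i}$ together with Klein-four swaps of the $Q_i$'s induced by $A_j\leftrightarrow\overline{A_j}$. The $H$-orbit of any further $A_l$ (with $P_l=\{A_l,\overline{A_l}\}\in\mathcal S\setminus\{P_1,P_2\}$) has size at least $\prod_i\binom{|Q_i|}{r_i}$ (where $r_i=|A_l\cap Q_i|$), modulo at most a factor $2$ from $P_l$-identification; it must lie inside $\mathcal S\setminus\{P_1,P_2\}$, forcing $\prod_i\binom{|Q_i|}{r_i}\le 2(m-2)$.

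A case analysis on $s$ finishes the proof. $s\in\{0,k\}$ gives $P_2=P_1$, excluded. For $s\in[1,k-1]$ and $m=3$, $\prod_i\binom{|Q_i|}{r_i}\le 2$ forces $r_i\in\{0,|Q_i|\}$ for all $i$, i.e.\ $A_l$ is a union of complete $Q_i$'s of total size $k$. The candidates are $\{A_1,\overline{A_1},A_2,\overline{A_2}\}$ (already in $P_1\cup P_2$) and, exactly when $s=k/2$ (so $k$ is even), $\{Q_1\cup Q_4,Q_2\cup Q_3\}$, contributing a unique new complementary pair $P_3$. Hence $m\le 3$, and for $m=3$ the sets $\{Q_1,Q_2,Q_3,Q_4\}$ form a partition of $[2k]$ into four equal halves of size $k/2$, giving $S=\{Q_i\cup Q_j:1\le i<j\le 4\}$, as prescribed. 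The main obstacle is the orbit-size analysis at the boundary configurations: in particular $s\in\{1,k-1\}$ (where some $|Q_i|=1$ relaxes the profile constraint on $r_i$) and the degenerate case $k=2$ (where $m=3$ would force $|S|=6=|X|$, so $S$ is not a proper fragment) must be verified separately.
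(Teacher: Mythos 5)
Your part (i) is correct, though it takes a genuinely different route from the paper. The paper disposes of an unpaired $A\in S$ in two short steps: if $S$ contains some complementary pair $\{B,\overline{B}\}$, then a $\gamma$ with $\gamma(A)=B$ makes $\gamma(S)\cap S$ contain $B$ but not $\overline{B}$, which is incompatible with being $\emptyset$, $S$, or a complementary pair; and if $S$ contains no pair at all, then $S$ would be an imprimitive set that is not a complementary pair. You instead extract $S_A\times S_{\overline{A}}\subseteq\Gamma_S$ from transpositions, deduce that $A$ is the unique unpaired member, and apply Lemma \ref{lem: bipartite} to the complementation involution $\psi$ to produce the strictly smaller fragment $S\setminus\{A\}$ with $|\phi(S\setminus\{A\})|\ge |S\setminus\{A\}|\ge 3$, contradicting minimality. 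This is sound (extending Lemma \ref{lem: bipartite} to $\langle S_n,\psi\rangle$ is legitimate since $\psi$ preserves adjacency when $n=2k$), just longer than the paper's argument.

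Part (ii) has a genuine gap: you never rule out $m\ge 4$. Your only quantitative tool is $\prod_i\binom{|Q_i|}{r_i}\le 2(m-2)$, and the deduction that every further $A_l$ is a union of complete atoms $Q_i$ --- which is what limits the new pairs to the single candidate $\{Q_1\cup Q_4,\,Q_2\cup Q_3\}$ --- is drawn only under the hypothesis $m=3$, where the right-hand side equals $2$. The sentence ``Hence $m\le 3$'' is therefore circular: for $m\ge 4$ the bound $2(m-2)\ge 4$ permits profiles with $0<r_i<|Q_i|$ for some $i$, the candidate list is no longer controlled, and nothing in your write-up excludes such configurations. What is missing is a statement valid for all $m$; the paper supplies one by arguing that every further pair $\{C,\overline{C}\}$ must induce the same four-block partition of $[2k]$ with $A$ as $B$ does (otherwise the Young subgroups it contributes to $\Gamma_S$ force $S$ to be too large), after which only one extra complementary pair is combinatorially possible. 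Separately, even at $m=3$ the implication ``$\prod_i\binom{|Q_i|}{r_i}\le 2$ forces $r_i\in\{0,|Q_i|\}$'' fails for the profile $\binom{2}{1}=2$; this case is recoverable (the two sets in that orbit are not mutual complements, so they yield two distinct new pairs, exceeding $m-2=1$), but the delicate boundary is $|Q_i|=2$ with $r_i=1$, not $|Q_i|=1$ as you flag.
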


\noindent{\bf Proof of Claim \ref{claim fra2}}
    \rm{(i)}
     Suppose there exist a complementary pair $\{B,\overline{B}\}\subseteq S$ and a set $A\in S$ but $\overline{A}\notin S$. Choose $\gamma\in S_n$ such that $\gamma(A)=B$. Then $B\in \gamma(S)$, but $\overline{B}\notin \gamma(S)$, meaning $\gamma(S)\cap S$ is neither empty, nor $S$, nor a complementary pair, a contradiction.

    Suppose for any $B\in S$, we have $\overline{B}\notin S$.
   Then $\gamma(S)\cap S$ can never be a complementary pair for any $\gamma\in S_n$ which implies  $S$ is primitive. However, all primitive sets are complementary pairs, a contradiction.

    \noindent\rm{(ii)} By (i) and $|S|\ge3$, $S$ consists of complementary $k$-set pairs.  Assume $S$ contains at least 3  complementary $k$-set pairs. Let $\{A,B,\overline{A},\overline{B}\}\subsetneqq S$ and $\Gamma_S=\{\gamma\in S_n:\gamma(S)=S\}$.

    Note that $\{A,B,\overline{A},\overline{B}\}\subseteq \gamma(S)\cap S$ for any $\gamma\in S_{A\cap B}\cup S_{A\cap \overline{B}}\cup S_{\overline{A}\cap B}\cup S_{\overline{A}\cap \overline{B}}$. Thus $S_{A\cap B}\cup S_{A\cap \overline{B}}\cup S_{\overline{A}\cap B}\cup S_{\overline{A}\cap \overline{B}}\subseteq \Gamma_S$. For any $\{C,\overline{C}\}\subseteq S\setminus\{A,B,\overline{A},\overline{B}\}$, we similarly have $S_{A\cap C}\cup S_{A\cap \overline{C}}\cup S_{\overline{A}\cap C}\cup S_{\overline{A}\cap \overline{C}}\subseteq \Gamma_S$.

    If $\{A\cap C,A\cap\overline{C},\overline{A}\cap C,\overline{A}\cap\overline{C}\}\neq \{A\cap B,A\cap\overline{B},\overline{A}\cap B,\overline{A}\cap\overline{B}\}$, then $S_{A\cap B}$, $S_{A\cap \overline{B}}$, $S_{\overline{A}\cap B}$, $S_{\overline{A}\cap \overline{B}}$, $S_{A\cap C}$, $S_{A\cap \overline{C}}$, $S_{\overline{A}\cap C}$, $S_{\overline{A}\cap \overline{C}}$ generate $S_n$, implying $S=\binom{[2k]}{k}$, a contradiction. Thus, $\{A\cap C,A\cap\overline{C},\overline{A}\cap C,\overline{A}\cap\overline{C}\}= \{A\cap B,A\cap\overline{B},\overline{A}\cap B,\overline{A}\cap\overline{B}\}$, and the only possible configuration is $\{A_1\cup A_2,A_1\cup A_3,A_1\cup A_4,A_2\cup A_3,A_2\cup A_4,A_3\cup A_4\}$, where $\{A_i\}^4_{i=1}$ partitions $[2k]$ with $|A_i|=\frac{k}{2}$.
    \q

 Claim \ref{claim fra2} characterize the structure of $S$, and we now determine all possible $L$. Since $L=k-L$, we have $x\in L$ if and only if $k-x\in L$. We complete the proof by considering two subcases.

 \noindent {\bf Subcase 3.1} $S= \{A,B,\overline{A},\overline{B}\}$.

    Since $S\in \mathcal{F}(X)$ and $N(\{A,\overline{A}\})=N(A)$, we have
    \begin{align}\label{eq}
        d(X)-2&=|N(S)|-|S| \nonumber\\
        &=|N(\{A,\overline{A}\})|+|N(\{B,\overline{B}\})\setminus N(\{A,\overline{A}\})|-4\nonumber\\
        &=d(X)-4+|N(\{B\})\setminus N(\{A\})|.
    \end{align}
    Thus, $|N(\{B\})\setminus N(\{A\})|=2$.

    From $|N(S)|-|S|=|N(\{A,\overline{A}\})|-|\{A,\overline{A}\}|$, we derive $|N(\{B,\overline{B}\})\setminus N(\{A,\overline{A}\})|=2$. Assume $|A\cap B|=a\leq\frac{k}{2}$ (otherwise, replace $B$ by $\overline{B}$). Let $$F_{w,x,y,z}=\left\{F\in \binom{[2k]}{k}:|F\cap A\cap B|=w,|F\cap A\cap \overline{B}|=x,|F\cap \overline{A}\cap B|=y,|F\cap \overline{A}\cap \overline{B}|=z\right\}.$$ So $|F_{w,x,y,z}|=\mathbf{1}_{w+x+y+z=k}(w,x,y,z)\binom{a}{w}\binom{a}{z}\binom{k-a}{x}\binom{k-a}{y}$. For fixed $w,x,y,z$, either $F_{w,x,y,z}\subseteq N(\{B,\overline{B}\})\setminus N(\{A,\overline{A}\})$ or $F_{w,x,y,z}\cap (N(\{B,\overline{B}\})\setminus N(\{A,\overline{A}\}))=\emptyset$. Denote $$g_L(w,x,y,z)=\mathbf{1}_{w+x+y+z=k}(w,x,y,z)\mathbf{1}_{w+x\in L}(w,x)\mathbf{1}_{w+y\notin L}(w,y)\binom{a}{w}\binom{a}{z}\binom{k-a}{x}\binom{k-a}{y}$$ for $0\leq w,x,y,z\leq k$. Then
    \begin{align}
        2=|N(\{B,\overline{B}\})\setminus N(\{A,\overline{A}\})|
        &=\sum_{0\leq w,z\leq a,0\leq x,y\leq k-a}g_L(w,x,y,z).\label{eq5}
    \end{align}

    Suppose $\{0,k\}\subseteq L$. There exists at least one $(w,x,y,z)$ with $g_L(w,x,y,z)>0$. Fix such a tuple. If any of $\binom{a}{w},\binom{a}{z},\binom{k-a}{x},\binom{k-a}{y}$ exceeds 1, by $w+x+y+z=k$, we have either at least two of them do, or $(a-w,k-a-x,k-a-y,a-z)\neq (w,x,y,z)$ with $g(a-w,k-a-x,k-a-y,a-z)=g_L(w,x,y,z)$. But in those two cases, we have  $|N(\{B,\overline{B}\})\setminus N(\{A,\overline{A}\})|\geq \sum_{0\leq w,x,y,z\leq k} g_L(w,x,y,z)\geq 4$, a contradiction with (\ref{eq5}). Thus we have $(w,x,y,z)=(0,0,a,k-a)$ or $(a,k-a,0,0)$, and then  $a,k-a\notin L$. If $\{0,k\}\subsetneqq L$,  there exists $\{b,k-b\}\subseteq L$ with $b\leq\frac{k}{2}$. Then $(w,x,y,z)=(a,k-2a,b-a,2a-b)$ (if $b>a$) or $(a,0,k-b-a,b)$ (if $a>b$) would satisfy $g_L(w,x,y,z)\geq 4$, a contradiction with (\ref{eq5}). Thus, $L=\{0,k\}$ is the only possible solution of (\ref{eq5}).

    Suppose  $\{0,k\}\subseteq\overline{L}$. By (\ref{eq5}), we have
    \begin{align}
        2=|N(\{B,\overline{B}\})\setminus N(\{A,\overline{A}\})|
        &=\sum_{0\leq w,z\leq a,0\leq x,y\leq k-a}g_L(w,x,y,z)\nonumber\\
        &=\sum_{0\leq w,z\leq a,0\leq x,y\leq k-a}g_L(a-w,k-a-x,k-a-y,a-z)\nonumber\\
        &=\sum_{0\leq w,z\leq a,0\leq x,y\leq k-a}g_{\overline{L}}(w,x,y,z).\label{eq6}
    \end{align}
    By (\ref{eq6}) and the same discussion  above, we have derive a contradiction if $\{0,k\}\subsetneqq \overline{L}$. Thus, $\overline{L}=\{0,k\}$ and $L=[k-1]$.

    From the analysis above, $L$ must be $\{0,k\}$ or $[k-1]$. If $L=\{0,k\}$, we have $N(S)=Y$, which implies $S$ is not a fragment, a contradiction. If $L=[k-1]$, easily verify that any nontrivial family $\mathcal{A}$ of $k$-sets closed under complementation is a fragment. Let $\emptyset\neq\mathcal{A}=\overline{\mathcal{B}}\subseteq \binom{[n]}{k}$ and we are done.

\vskip.2cm
     \noindent {\bf Subcase 3.2} $S=\{A_1\cup A_2,A_1\cup A_3,A_1\cup A_4,A_2\cup A_3,A_2\cup A_4,A_3\cup A_4\}$, where $\{A_i\}^4_{i=1}$ partitions $[2k]$ with $|A_i|=\frac{k}{2}$.

     Let $A=A_1\cup A_2$, $B=A_1\cup A_3$, $C=A_1\cup A_4$.
    We have
    \begin{align*}
        d(X)-2&=|N(S)|-|S|\\
        &=|N(\{A,\overline{A}\})|+|N(\{B,\overline{B}\})\setminus N(\{A,\overline{A}\})|\\&+|N(\{C,\overline{C}\})\setminus (N(\{A,\overline{A}\})\cup N(\{B,\overline{B}\}))|-6\\
        &\geq d(X)+|N(\{B\})\setminus N(\{A\})|-6.
    \end{align*}
    Thus, $|N(\{B\})\setminus N(\{A\})|\leq 4$. On the other hand,
    \begin{align*}
         d(X)-2&=|N(S)|-|S|\\
        &\leq |N(\{A\})|+|N(\{B\})\setminus N(\{A\})|+|N(\{C\})\setminus N(\{A\}))|-6.\\
    \end{align*}
    Thus, $|N(\{B,\overline{B}\})\setminus N(\{A,\overline{A}\})|+|N(\{C,\overline{C}\})\setminus N(\{A,\overline{A}\}))|\geq 4$. Without loss of generality, assume $|N(\{B\})\setminus N(\{A\})|>0$.

    Let $$F_{w,x,y,z}=\left\{F\in \binom{[2k]}{k}:|F\cap A_1|=w,|F\cap A_2|=x,|F\cap A_3|=y,|F\cap A_4|=z\right\}.$$ So $|F_{w,x,y,z}|=\mathbf{1}_{w+x+y+z=k}(w,x,y,z)\binom{k/2}{w}\binom{k/2}{z}\binom{k/2}{x}\binom{k/2}{y}$.  Write $$g_L(w,x,y,z)=\mathbf{1}_{w+x+y+z=k}(w,x,y,z)\mathbf{1}_{w+x\in L}(w,x)\mathbf{1}_{w+y\notin L}(w,y)\binom{k/2}{w}\binom{k/2}{z}\binom{k/2}{x}\binom{k/2}{y}$$ for $0\leq w,x,y,z\leq \frac{k}{2}$.
    Therefore
    \begin{align}\label{eq7}
        4\geq|N(\{B\})\setminus N(\{A\})|
       =\sum_{0\leq w,x,y,z\leq k/2}g_L(w,x,y,z).
    \end{align}

    If  $g_L(w,x,y,z)\le1$ for any $0\leq w,x,y,z\leq k/2$, then the only possible solution such that $g_L(w,x,y,z)>0$ is $(w,x,y,z)=(0,0,a,k-a)$ or $(a,k-a,0,0)$. Thus, $|N(\{B,\overline{B}\})\setminus N(\{A,\overline{A}\})|=2$, meaning $\{A,B,\overline{A},\overline{B}\}$ is a fragment, a contradiction to the minimality  of $S$.

    If there is a $(w,x,y,z)$ satisfies $g_L(w,x,y,z)>1$, then $g_L(w,x,y,z)\geq 4$ (since at least two binomial coefficients exceed 1). The equality holds only when $k=4$ and $(w,x,y,z)=(2,1,0,1)$, $(1,1,0,2)$, $(2,0,1,1)$, or $(1,0,1,2)$. In each case, $g(2-w,2-x,2-y,2-z)=4$. So $|N(\{B\})\setminus N(\{A\})|\geq 8$, a contradiction with (\ref{eq7}).
 \hfill$\square$
\end{mycase}

\section{Pairwise cross $L$-intersecting}
For any $S\subseteq [n]$ and $\mathcal{F}\subseteq\binom{[n]}{k}$, define $\mathcal{F}(S)=\{F\in \mathcal{F}:S\subseteq F\}$ and $\hat{\mathcal{F}}(S)=\{F\setminus S:F\in \mathcal{F}(S)\}$. Let $L$ be a set of integers with $L\subseteq[n]$ and $\mathcal{A}_1, \mathcal{A}_2, \dots, \mathcal{A}_r \subseteq \binom{[n]}{k}$ be pairwise non-empty cross-intersecting families.
When $L=[0,k]$, it's trivial to have $\sum_{i=1}^r|\mathcal{A}_i|\leq r\binom{n}{k}$.
Let $L=[0,k-1]$. For every $A\in \mathcal{A}_i$ and $B\in \mathcal{A}_j$ with $i\neq j$, we have $A\neq B$. So $\mathcal{A}_1,\dots,\mathcal{A}_r$ are disjoint and then $\sum_{i=1}^r|\mathcal{A}_i|\leq \binom{n}{k}$. When $L=[t,k]$ for some $t\in[k]$, it has been done by Theorem \ref{t-intersecting}. Then we left the case (iv) in Theorem \ref{thm: r cross L intersecting}. Recall the conditions in Theorem \ref{thm: r cross L intersecting}(iv) are $k\in L$ and $L\neq [t,k]$ for any $t\in [0,k]$.

Let $\mathcal{A}_1,\dots,\mathcal{A}_r\subseteq \binom{[n]}{k}$ be the non-empty pairwise cross $L$-intersecting families, which obtain the maximum $\sum^r_{i=1}|\mathcal{A}_i|$.
Set $\mathcal{B}_1=\mathcal{B}_2=\dots=\mathcal{B}_{r-1}=\{[k]\}$ and $\mathcal{B}_r=\{A\in \binom{[n]}{k}:~|A\cap [k]|\in L\}$. Since $k\in L$, $\mathcal{B}_1,\dots,\mathcal{B}_r$ are pairwise cross $L$-intersecting. Then
  \begin{align}\label{seq4-1}
  \sum^r_{i=1}|\mathcal{A}_i|\ge \sum_{i\in L}\binom{k}{i}\binom{n-k}{k-i}+r-1=\binom{n}{k}-\binom{k}{s}\binom{n-s}{k-s}-o(n^{k-s}).\end{align}

We first give some lemmas which will be used in the proof of our result  by considering whether $0\in L$. First, we deal with the case where $0\in L$.
\subsection{The case when $0\in L$}
In this subsection, let $n,k,r$ be positive integers with $n\geq 2k$ and $k, r\geq 2$. Let $0,k\in L$  and $L\neq [t,k]$ for any $t\in [0,k]$. Let $s$ be the minimum non-negative integer such that $s\notin L$, then $1\leq s\leq k-1$. Assume   $|\mathcal{A}_1|\leq |\mathcal{A}_2|\leq \dots \leq |\mathcal{A}_r|$.
 We have the following results.
\begin{lemma}\label{3.1}
    For any $S\in \binom{[n]}{s}$, one of the following results holds:
    \begin{enumerate}
        \item $\sum^r_{i=1}|\mathcal{A}_i(S)|\leq \max\left\{\binom{n-s}{k-s}-\binom{n-k}{k-s}+r-1,r\binom{n-s-1}{k-s-1}\right\}$.
        \item There are  exactly $r-1$ families in $\mathcal{A}_1(S), \mathcal{A}_2(S), \ldots, \mathcal{A}_r(S)$ are empty.
    \end{enumerate}
\end{lemma}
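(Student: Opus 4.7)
The plan is to reduce Lemma 3.1 to the Shi--Frankl--Qian cross-intersecting theorem (Theorem 1.5) by a simple shift-down. For each $i$, observe that $|\mathcal{A}_i(S)| = |\hat{\mathcal{A}}_i(S)|$, where $\hat{\mathcal{A}}_i(S) \subseteq \binom{[n] \setminus S}{k-s}$; hence it suffices to bound $\sum_{i=1}^r |\hat{\mathcal{A}}_i(S)|$ on the smaller ground set $[n] \setminus S$.

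The first step is to verify that $\hat{\mathcal{A}}_1(S), \ldots, \hat{\mathcal{A}}_r(S)$ are pairwise cross-intersecting. Take any $A \in \mathcal{A}_i(S)$ and $B \in \mathcal{A}_j(S)$ with $i \neq j$. Then $S \subseteq A \cap B$, so $|A \cap B| \geq s$. The cross $L$-intersecting hypothesis forces $|A \cap B| \in L$, and by the definition of $s$ we have $s \notin L$, so $|A \cap B| \geq s+1$. Removing $S$ yields $|(A \setminus S) \cap (B \setminus S)| \geq 1$, which is precisely the cross-intersecting condition on $\binom{[n]\setminus S}{k-s}$. This is the one place where the hypothesis ``$s \notin L$'' enters, and it is really the crux of the reduction.

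The second step is a case split on how many of the $\hat{\mathcal{A}}_i(S)$ are non-empty. If all are empty, the sum is $0$ and the bound in case 1 holds trivially. If exactly one is non-empty, then $r-1$ are empty and we are in case 2. If $m \geq 2$ of them are non-empty, Theorem 1.5 applies on the ground set $[n] \setminus S$ of size $n - s$, since $n \geq 2k$ gives $n - s \geq 2(k - s)$. It yields
\[
\sum_{i=1}^r |\mathcal{A}_i(S)| \;=\; \sum_{i : \hat{\mathcal{A}}_i(S) \neq \emptyset} |\hat{\mathcal{A}}_i(S)| \;\leq\; \max\!\left\{\binom{n-s}{k-s} - \binom{n-k}{k-s} + m - 1,\; m\binom{n-s-1}{k-s-1}\right\}.
\]
Both arguments inside the $\max$ are increasing in $m$, and $m \leq r$, so this is at most the case-1 bound. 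The identifications $\binom{n'-k'}{k'} = \binom{n-k}{k-s}$ and $\binom{n'-1}{k'-1} = \binom{n-s-1}{k-s-1}$ under $n' = n-s$, $k' = k-s$ match the stated bound on the nose.

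There is no real obstacle: once the shift-down is performed and the cross-intersecting property is verified, the lemma is a direct corollary of Theorem 1.5 together with the trivial monotonicity of the bound in the number of families. The only conceptual point worth stating clearly is that the dichotomy in the lemma arises simply from whether at most one, or at least two, of the $\hat{\mathcal{A}}_i(S)$ survive.
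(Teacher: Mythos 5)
Your proposal is correct and follows essentially the same route as the paper: pass to the links $\hat{\mathcal{A}}_i(S)$, show they are pairwise cross-intersecting because $S\subseteq A\cap B$ together with $|A\cap B|\in L$ and $s\notin L$ forces $|A\cap B|\ge s+1$, and then invoke the known bound for non-empty pairwise cross-intersecting families restricted to the non-empty links, using monotonicity in the number of families. The only cosmetic difference is that the paper applies Theorem \ref{t-intersecting} with $t=1$ (which requires $k-s>1$ and hence treats $s=k-1$ by a separate one-line argument), whereas your single application of the Shi--Frankl--Qian theorem covers both cases at once since the two bounds coincide for $t=1$.
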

\begin{proof}
For any $S \in \binom{[n]}{s}$, if all families $\mathcal{A}_1(S), \mathcal{A}_2(S), \ldots, \mathcal{A}_r(S)$ are empty, then  the conclusion holds trivially. 
Therefore, we only need to consider the case where at least two families of $\mathcal{A}_1(S), \mathcal{A}_2(S), \ldots, \mathcal{A}_r(S)$ are non-empty. Assume $\mathcal{A}_{i_1}(S), \mathcal{A}_{i_2}(S), \ldots, \mathcal{A}_{i_{r'}}(S)$ are the non-empty families, where  $2\le r' \leq r$. 
We have the following claim.
\begin{claim}\label{C_1}
The families $\hat{\mathcal{A}}_{i_1}(S), \hat{\mathcal{A}}_{i_2}(S), \ldots, \hat{\mathcal{A}}_{i_{r'}}(S)$ are pairwise intersecting.
\end{claim}

\noindent{\bf Proof of Claim \ref{C_1}}
By contradiction. Suppose, without loss of generality, there are $F_1 \in \hat{\mathcal{A}}_{i_1}(S)$ and $F_2 \in \hat{\mathcal{A}}_{i_2}(S)$ with $F_1 \cap F_2 = \emptyset$. Then $F_1\cup S\in \mathcal{A}_{i_1}$ and $F_2\cup S\in \mathcal{A}_{i_2}$ and $|(F_1\cup S)\cap(F_2\cup S)| = s \notin L$, a contradiction.
\q

To complete the proof of Lemma \ref{3.1}, we proceed by case analysis on $s$.

\textbf{Case 1:} $1\leq s\leq k-2$.

Since $\hat{\mathcal{A}}_{i_1}(S),\ldots,\hat{\mathcal{A}}_{i_{r'}}(S)\subseteq\binom{[n]\setminus S}{k-s}$, by Claim \ref{C_1} and Theorem~\ref{t-intersecting}, we have
 \begin{align*}
\sum^r_{i=1}|\mathcal{A}_i(S)| &= \sum^{r'}_{j=1}|\hat{\mathcal{A}}_{i_j}(S)| \leq \max\left\{\binom{n-s}{k-s}-\binom{n-k}{k-s}+r'-1, r'\binom{n-s-1}{k-s-1}\right\}\\
&\le\max\left\{\binom{n-s}{k-s}-\binom{n-k}{k-s}+r-1, r\binom{n-s-1}{k-s-1}\right\}.
 \end{align*}

\textbf{Case 2:} $s=k-1$.

In this case, the $1$-uniform families $\hat{\mathcal{A}}_{i_j}(S)$ must be identical singletons  by Claim \ref{C_1}, that is, there exists $x\in[n]$ with $\hat{\mathcal{A}}_{i_j} = \{x\}$ for all $j\in[r']$. Thus
$$
\sum^r_{i=1}|\mathcal{A}_i(S)| = r' \leq r = \max\left\{\binom{n-s}{k-s}-\binom{n-k}{k-s}+r-1, r\binom{n-s-1}{k-s-1}\right\},
$$
where the equality holds because $\binom{n-(k-1)}{1}- \binom{n-k}{1} + r - 1 = r-1$ and $r\binom{n-(k-1)-1}{0} = r$.
\end{proof}

For  $\mathcal{F}\subseteq\binom{[n]}{k}$, we define $$\mathcal{S}_s(\mathcal{F})=\left\{S\in \binom{[n]}{s}:|\mathcal{F}(S)|> \frac{3}{2}\binom{n-s}{k-s}-\binom{n-k}{k-s}\right\}.$$
\begin{lemma}\label{3.3}
    The families $\mathcal{S}_s(\mathcal{A}_1),\dots,\mathcal{S}_s(\mathcal{A}_r)\subseteq \binom{[n]}{s}$ are pairwise cross $[0,\max\{0,2s-k-1\}]$-intersecting.
\end{lemma}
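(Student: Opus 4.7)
The plan is to argue by contradiction. Assume that for some $i\ne j$ there exist $S_1\in\mathcal{S}_s(\mathcal{A}_i)$ and $S_2\in\mathcal{S}_s(\mathcal{A}_j)$ with $t:=|S_1\cap S_2|\ge t_0:=\max\{1,2s-k\}$. Setting $T=S_1\cap S_2$, $U=S_1\setminus S_2$, $V=S_2\setminus S_1$ and $W=[n]\setminus(S_1\cup S_2)$, the strategy is to restrict $\mathcal{A}_i(S_1)$ and $\mathcal{A}_j(S_2)$ to the parts avoiding the ``other'' side, obtain two cross-intersecting families inside $\binom{W}{k-s}$, and then invoke Theorem~\ref{thm: 2 cross L intersecting}. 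Concretely, define
\[
\mathcal{A}'_i=\{A\setminus S_1:A\in\mathcal{A}_i(S_1),\,A\cap V=\emptyset\},\quad \mathcal{A}'_j=\{B\setminus S_2:B\in\mathcal{A}_j(S_2),\,B\cap U=\emptyset\},
\]
both viewed as subfamilies of $\binom{W}{k-s}$.

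The key structural claim is that $\mathcal{A}'_i$ and $\mathcal{A}'_j$ are cross $L'$-intersecting in $\binom{W}{k-s}$ with $L'=[0,k-s]\setminus\{s-t\}$: for $A'\in\mathcal{A}'_i$ and $B'\in\mathcal{A}'_j$, setting $A=S_1\cup A'$ and $B=S_2\cup B'$ gives $A\cap B=T\cup(A'\cap B')$ because $A',B'\subseteq W$ are disjoint from $S_1\cup S_2$, hence $|A\cap B|=t+|A'\cap B'|$, and $s\notin L$ forces $|A'\cap B'|\ne s-t$ (here $0\le s-t\le k-s$, using $t\le s$ and $t\ge 2s-k$). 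Since at most $\binom{n-s}{k-s}-\binom{n-2s+t}{k-s}$ sets $A\supseteq S_1$ meet $V$, the popularity hypothesis gives
\[
|\mathcal{A}'_i|>\tfrac12\binom{n-s}{k-s}-\binom{n-k}{k-s}+\binom{n-2s+t}{k-s},
\]
and the analogous bound for $|\mathcal{A}'_j|$; both are non-empty for large $n$ since $t\ge 2s-k$ yields $\binom{n-2s+t}{k-s}\ge\binom{n-k}{k-s}$. For $n$ large enough, $|W|=n-2s+t>2(k-s)$ and $L'\ne[0,k-s]$, so case~(ii) of Theorem~\ref{thm: 2 cross L intersecting} applies and, after Vandermonde, yields
\[
|\mathcal{A}'_i|+|\mathcal{A}'_j|\le\binom{n-2s+t}{k-s}-\binom{k-s}{s-t}\binom{n-s-k+t}{k-2s+t}+1.
\]

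Combining the two bounds reduces the whole argument to verifying the inequality
\[
\binom{n-s}{k-s}-2\binom{n-k}{k-s}+\binom{n-2s+t}{k-s}+\binom{k-s}{s-t}\binom{n-s-k+t}{k-2s+t}>1
\]
for $n$ sufficiently large; this is the main technical point. An asymptotic expansion in $n$ shows the $n^{k-s}$-coefficients of the first three binomials cancel to $1-2+1=0$, while the coefficient of $n^{k-s-1}$ equals $(2k-3s+t)/(k-s-1)!$, which is strictly positive because $t\ge t_0$ forces $t>3s-2k$ in both cases $2s\le k$ (where $3s-2k<0$) and $2s>k$ (where $t_0-(3s-2k)=k-s>0$). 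Hence the left-hand side tends to infinity with $n$, the inequality holds for all sufficiently large $n$, and the desired contradiction follows.
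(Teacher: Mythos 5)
Your proof is correct and follows essentially the same route as the paper: restrict $\mathcal{A}_i(S_1)$ and $\mathcal{A}_j(S_2)$ to the sets avoiding the other side, observe that the resulting quotient families in $\binom{W}{k-s}$ are cross $([0,k-s]\setminus\{s-t\})$-intersecting, and play the popularity lower bounds off against the upper bound of Theorem~\ref{thm: 2 cross L intersecting}(ii). The only (harmless) blemish is the claim that the left-hand side of your final inequality tends to infinity: in the edge case $k-s=1$, $t=s-1$ it is the constant $2$, which still exceeds $1$, so the contradiction goes through unchanged.
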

\begin{proof}
    To the contrary, without loss of generality, suppose that $|S_1 \cap S_2| > \max\{0,2s-k-1\}$, where $S_1\in \mathcal{S}_s(\mathcal{A}_1)$ and $S_2\in \mathcal{S}_s(\mathcal{A}_2)$. Since $2s-k-1<|S_1 \cap S_2|\leq s$, we have  $|S_1\cup S_2|=2s-|S_1\cap S_2|\leq k$.

    Define $\mathcal{G}_1=\left\{F\in \mathcal{A}_1(S_1):F\cap S_2=S_1\cap S_2\right\}$, $\mathcal{G}_2=\left\{F\in \mathcal{A}_2(S_2):F\cap S_1=S_1\cap S_2\right\}$ and define $\mathcal{G}'_1=\left\{G\setminus S_1:G\in \mathcal{G}_1\right\}$, $\mathcal{G}'_1=\left\{G\setminus S_2:G\in \mathcal{G}_2\right\}$. For  $\{i,j\}=\{1,2\}$, we have
    \begin{align*}
        |\mathcal{G}'_i|&=|\mathcal{A}_i(S_i)|-|\left\{F\in \mathcal{A}_i(S_i):(F\setminus S_i)\cap S_j\neq\emptyset\right\}|\\
        &> \frac{3}{2}\binom{n-s}{k-s}-\binom{n-k}{k-s}-\left(\binom{n-s}{k-s}-\binom{n-k}{k-s}\right)\\
        &=\frac{1}{2}\binom{n-s}{k-s}.
    \end{align*}
    Therefore,
    \begin{align*}
        |\mathcal{G}'_1|+|\mathcal{G}'_2|&\ge \binom{n-s}{k-s}+1.
    \end{align*}
    Noticing that $\mathcal{G}'_1,\mathcal{G}'_2\subseteq \binom{[n]\setminus (S_1\cup S_2)}{k-s}$ and
    $$\binom{n-s}{k-s}>\binom{n-|S_1\cup S_2|}{k-s}-\binom{k-s}{s-|S_1\cap S_2|}\binom{n-|S_1\cup S_2|-k+s}{k-|S_1\cap S_2|},$$
    by Theorem \ref{thm: 2 cross L intersecting} (ii), $\mathcal{G}'_1,\mathcal{G}'_2$ can not be cross ($[0,k-s]\setminus \{s-|S_1\cap S_2|\}$)-intersecting (note that $0\leq s-|S_1\cap S_2|\leq k-s$).
    Then, there exist $G_1\in \mathcal{G}'_1$ and $G_2\in \mathcal{G}'_2$ such that $|G_1\cap G_2|=s-|S_1\cap S_2|$. Therefore, $|(G_1\cup S_1)\cap(G_2\cup S_2)|=s\notin L$, where $G_1\cup S_1\in \mathcal{A}_1$ and $G_2\cup S_2\in \mathcal{A}_2$, a contradiction.
\end{proof}
\begin{lemma}\label{lem: x is O(1)}
    Let $n$ be sufficiently large $($relative to $r$$)$ and $r\geq 2$. Then $\sum_{i=1}^r|\mathcal{S}_s(\mathcal{A}_i)|\geq \binom{n}{s}-3\binom{k}{s}^2$.
    \end{lemma}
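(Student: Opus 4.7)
The plan is to bound the complement $U:=\binom{[n]}{s}\setminus\bigcup_{i=1}^r\mathcal{S}_s(\mathcal{A}_i)$, after first observing that the sets $\mathcal{S}_s(\mathcal{A}_1),\dots,\mathcal{S}_s(\mathcal{A}_r)$ are pairwise disjoint. Indeed, if $S\in\mathcal{S}_s(\mathcal{A}_i)\cap\mathcal{S}_s(\mathcal{A}_j)$ for some $i\neq j$, then applying Lemma~\ref{3.3} with $S_1=S_2=S$ would force $s\in[0,\max\{0,2s-k-1\}]$, contradicting $1\le s\le k-1$. Hence $\sum_{i=1}^r|\mathcal{S}_s(\mathcal{A}_i)|=\binom{n}{s}-|U|$, and it suffices to show $|U|\le 3\binom{k}{s}^2$ for $n$ sufficiently large.

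I would then exploit the double-counting identity $\binom{k}{s}\sum_{i=1}^r|\mathcal{A}_i|=\sum_{S\in\binom{[n]}{s}}\sum_{i=1}^r|\mathcal{A}_i(S)|$. For the left side, (\ref{seq4-1}) gives the lower bound
\[
\binom{k}{s}\sum_{i=1}^r|\mathcal{A}_i|\ge \binom{k}{s}\binom{n}{k}-\binom{k}{s}^2\binom{n-k}{k-s}-O(n^{k-s-1})+\binom{k}{s}(r-1).
\]
For the right side I would apply Lemma~\ref{3.1} pointwise. The crucial observation is that for every $S\in T:=\bigcup_i\mathcal{S}_s(\mathcal{A}_i)$, some $|\mathcal{A}_{i_0}(S)|$ strictly exceeds $\tfrac{3}{2}\binom{n-s}{k-s}-\binom{n-k}{k-s}$, and for large $n$ this quantity dominates $\max\{\binom{n-s}{k-s}-\binom{n-k}{k-s}+r-1,\ r\binom{n-s-1}{k-s-1}\}$; hence option (1) of Lemma~\ref{3.1} is ruled out, so option (2) forces $\sum_i|\mathcal{A}_i(S)|=|\mathcal{A}_{i_0}(S)|\le \binom{n-s}{k-s}$. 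For $S\in U$, both options of Lemma~\ref{3.1} yield $\sum_i|\mathcal{A}_i(S)|\le \tfrac{3}{2}\binom{n-s}{k-s}-\binom{n-k}{k-s}$ by the definition of $U$ combined with the same threshold comparison.

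Combining these pointwise bounds with $|T|+|U|=\binom{n}{s}$ and the standard identity $\binom{n}{s}\binom{n-s}{k-s}=\binom{k}{s}\binom{n}{k}$ reduces the inequality to
\[
|U|\cdot\left[\binom{n-k}{k-s}-\tfrac{1}{2}\binom{n-s}{k-s}\right]\le \binom{k}{s}^2\binom{n-k}{k-s}+O(n^{k-s-1}).
\]
Since $\binom{n-k}{k-s}/\binom{n-s}{k-s}\to 1$ as $n\to\infty$, the bracket is asymptotic to $\tfrac{1}{2}\binom{n-k}{k-s}$, and dividing gives $|U|\le 2\binom{k}{s}^2(1+o(1))\le 3\binom{k}{s}^2$ for $n$ large enough, as desired. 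The main technical hurdle is upgrading the trivial estimate $\sum_i|\mathcal{A}_i(S)|\le r\binom{n-s}{k-s}$ on $T$ to the sharper $\binom{n-s}{k-s}$; this factor-of-$r$ savings, obtained by ruling out option (1) of Lemma~\ref{3.1} via the asymptotic comparison of $\Theta(n^{k-s})$ with $O(n^{k-s-1})$, is precisely what makes $|U|$ bounded by a constant independent of $n$.
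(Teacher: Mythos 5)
Your proof is correct and follows essentially the same route as the paper: the same double count of $\sum_{S}\sum_i|\mathcal{A}_i(S)|$, the same pointwise bounds obtained by ruling out option (1) of Lemma~\ref{3.1} on $\bigcup_i\mathcal{S}_s(\mathcal{A}_i)$ and invoking the definition of $\mathcal{S}_s$ on the complement, and the same comparison with (\ref{seq4-1}) to force $|U|\le 3\binom{k}{s}^2$. Your opening observation that the $\mathcal{S}_s(\mathcal{A}_i)$ are pairwise disjoint is a correct (and slightly cleaner) refinement, but the paper gets by with the trivial bound $|\bigcup_i\mathcal{S}_s(\mathcal{A}_i)|\le\sum_i|\mathcal{S}_s(\mathcal{A}_i)|$, so it is not essential.
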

    \begin{proof}
        Let $\sum_{i=1}^r|\mathcal{S}_s(\mathcal{A}_i)|= \binom{n}{s}-a$. It remains to show that $a\leq 3\binom{k}{s}^2$. If there are at least $r-1$ families in  $\{\mathcal{A}_i(S);1\leq i\leq s\}$ are empty, then we have
        \begin{align}\label{09}
            \sum_{i=1}^r|\mathcal{A}_i(S)|&\leq
            \begin{cases}
                \binom{n-s}{k-s}\text{\qquad\qquad\,\, for every }S\in \bigcup_{i=1}^r\mathcal{S}_s(\mathcal{A}_i)\\
                \frac{3}{2}\binom{n-s}{k-s}-\binom{n-k}{k-s}\text{ for every }S\in \binom{[n]}{s}\setminus\bigcup_{i=1}^r\mathcal{S}_s(\mathcal{A}_i).
            \end{cases}
        \end{align}
        Assume there are at least two families in  $\{\mathcal{A}_i(S);1\leq i\leq s\}$ are non-empty. By Lemma \ref{3.1}, $\sum_{i=1}^r|\mathcal{A}_i(S)|\leq \max\left\{\binom{n-s}{k-s}-\binom{n-k}{k-s}+r-1,r\binom{n-s-1}{k-s-1}\right\}=O(n^{k-s-1})< \min\{\binom{n-s}{k-s},\frac{3}{2}\binom{n-s}{k-s}-\binom{n-k}{k-s}\}$ for every $S\in \binom{[n]}{s}$ which implies  (\ref{09}) always holds. Now we estimate the size of $\sum_{i=1}^r|\mathcal{A}_r|$.
 \begin{align*}\label{eq:2}
               \sum_{i=1}^r|\mathcal{A}_r|&=\binom{k}{s}^{-1}\sum_{S\in \binom{[n]}{s}}\sum_{i=1}^r|\mathcal{A}_i(S)|\\
        &=\binom{k}{s}^{-1} \left( \sum_{S\in \bigcup^r_{i=1}\mathcal{S}_s(\mathcal{A}_i) }\sum_{i=1}^r|\mathcal{A}_i(S)|+\sum_{S\in \binom{[n]}{s}\setminus(\bigcup^r_{i=1}\mathcal{S}_s(\mathcal{A}_i))}\sum_{i=1}^r|\mathcal{A}_i(S)| \right)\\
        &\leq \binom{k}{s}^{-1}|\bigcup^r_{i=1}\mathcal{S}_s(\mathcal{A}_i)|\binom{n-s}{k-s}\\&+\binom{k}{s}^{-1}\left(\binom{n}{s}-|\bigcup^r_{i=1}\mathcal{S}_s(\mathcal{A}_i)|\right)\left(\frac{3}{2}\binom{n-s}{k-s}-\binom{n-k}{k-s}\right)\\
        &=\binom{k}{s}^{-1}\left(\binom{n-k}{k-s}-\frac{1}{2}\binom{n-s}{k-s}\right)\left(|\bigcup^r_{i=1}\mathcal{S}_s(\mathcal{A}_i)|\right)\\&+\binom{k}{s}^{-1}\left(\frac{3}{2}\binom{n-s}{k-s}-\binom{n-k}{k-s}\right)\binom{n}{s}.\\
        &\leq \binom{k}{s}^{-1}\left(\binom{n-k}{k-s}-\frac{1}{2}\binom{n-s}{k-s}\right)\left(\sum^r_{i=1}|\mathcal{S}_s(\mathcal{A}_i)|\right)\\&+\binom{k}{s}^{-1}\left(\frac{3}{2}\binom{n-s}{k-s}-\binom{n-k}{k-s}\right)\binom{n}{s}.
 \end{align*}
 Noticing that $\binom{n}{k}\binom{k}{s}=\binom{n-s}{k-s}\binom{n}{s}$, and combining with (\ref{seq4-1}),  we have
    \begin{align*}
        &\binom{k}{s}\binom{n}{k}-\binom{k}{s}^2\binom{n-k}{k-s}+o(n^{k-s})\\&\leq \left(\binom{n-k}{k-s}-\frac{1}{2}\binom{n-s}{k-s}\right)\left(\binom{n}{s}-a\right)+\left( \frac{3}{2}\binom{n-s}{k-s}-\binom{n-k}{k-s}\right)\binom{n}{s}\\
       &=\binom{k}{s}\binom{n}{k}-\left(\binom{n-k}{k-s}-\frac{1}{2}\binom{n-s}{k-s}\right)a.
    \end{align*}
    Then we have $$a\leq\frac{{\binom{k}{s}^2\binom{n-k}{k-s}+o(n^{k-s})}}{\binom{n-k}{k-s}-\frac{1}{2}\binom{n-s}{k-s}}\leq 3\binom{k}{s}^2$$
    when $n$ is large enough.
    \end{proof}
\begin{lemma}\label{lem: x at most k}
    Let $n$ be sufficiently large $($relative to $r$$)$ and $r\geq 2$. If  there are exactly $r-1$ families of $\mathcal{S}_s(\mathcal{A}_1),\dots,\mathcal{S}_s(\mathcal{A}_r)$ $(i.e.\,  \mathcal{S}_s(\mathcal{A}_1),\dots,\mathcal{S}_s(\mathcal{A}_{r-1}))$ are empty,  then $|\bigcup_{i=1}^{r-1}\mathcal{A}_i|= 1$.
\end{lemma}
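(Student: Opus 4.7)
The plan is to argue by contradiction: set $\mathcal{U} := \bigcup_{i=1}^{r-1}\mathcal{A}_i$, assume $N := |\mathcal{U}| \geq 2$, and derive a contradiction with the lower bound (\ref{seq4-1}) by combining a Kruskal--Katona-type bound on $|\mathcal{U}|$ with a direct count of the loss in $|\mathcal{A}_r|$ forced by the cross $L$-intersection with two distinct sets of $\mathcal{U}$.

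First I would show that for every $A \in \mathcal{U}$ and every $S \in \binom{A}{s}$, one has $S \notin \mathcal{S}_s(\mathcal{A}_r)$. Indeed, $A$ lies in some $\mathcal{A}_j$ with $j \neq r$, so every $F \in \mathcal{A}_r(S)$ satisfies $|F \cap A| \in L$; since $F \supseteq S \subseteq A$ gives $|F \cap A| \geq s$, and $s \notin L$, we get $|F \cap A| \geq s + 1$, whence $F \cap (A \setminus S) \neq \emptyset$. A direct count then yields
\begin{equation*}
|\mathcal{A}_r(S)| \leq \binom{n-s}{k-s} - \binom{n-k}{k-s} < \tfrac{3}{2}\binom{n-s}{k-s} - \binom{n-k}{k-s},
\end{equation*}
so $S \notin \mathcal{S}_s(\mathcal{A}_r)$. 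Hence $\partial_s(\mathcal{U}) \subseteq \binom{[n]}{s} \setminus \mathcal{S}_s(\mathcal{A}_r)$. Since $\mathcal{S}_s(\mathcal{A}_i) = \emptyset$ for $i \in [r-1]$, Lemma \ref{lem: x is O(1)} gives $|\mathcal{S}_s(\mathcal{A}_r)| \geq \binom{n}{s} - 3\binom{k}{s}^2$, so $|\partial_s(\mathcal{U})| \leq 3\binom{k}{s}^2$. By Corollary \ref{cor}, there exists a constant $C = C(k,s)$, independent of $n$, with $|\mathcal{U}| \leq C$.

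Next, assuming $N \geq 2$, fix distinct $A_1, A_2 \in \mathcal{U}$ and set $\ell = |A_1 \cap A_2| \in [0,k-1]$. I would construct many $F \in \binom{[n]}{k}$ with $|F \cap A_1| \in L$ and $|F \cap A_2| = s$, none of which can lie in $\mathcal{A}_r$. Taking $c := \min(s-1,\ell)$ and checking the two cases $\ell \leq s-1$ and $\ell \geq s$, one verifies both that $c \in [0,s-1] \subseteq L$ and that $c$ lies in the feasible interval $[\max(0,s-k+\ell),\min(s,\ell)]$. Placing $c$ elements in $A_1 \cap A_2$, $s-c$ elements in $A_2 \setminus A_1$, no elements in $A_1 \setminus A_2$, and $k-s$ elements in $[n] \setminus (A_1 \cup A_2)$ gives $|F \cap A_1| = c \in L$ and $|F \cap A_2| = s \notin L$; the number of such $F$ is at least $\binom{\ell}{c}\binom{k-\ell}{s-c}\binom{n-2k+\ell}{k-s} = \Omega(n^{k-s})$. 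Setting $M := \sum_{i \in L}\binom{k}{i}\binom{n-k}{k-i}$ and using $\mathcal{A}_r \subseteq \{F : |F \cap A_1| \in L\}$ of size $M$, we conclude $|\mathcal{A}_r| \leq M - \Omega(n^{k-s})$.

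Combining with $\sum_{i=1}^{r-1}|\mathcal{A}_i| \leq (r-1)|\mathcal{U}| \leq (r-1)C$ yields
\begin{equation*}
\sum_{i=1}^{r}|\mathcal{A}_i| \leq (r-1)C + M - \Omega(n^{k-s}) < M + r - 1
\end{equation*}
for $n$ sufficiently large, contradicting (\ref{seq4-1}); hence $N = 1$. The main obstacle is establishing the $\Omega(n^{k-s})$ lower bound uniformly over $\ell \in [0,k-1]$, which is where I crucially use that $s$ is the \emph{smallest} positive integer missing from $L$, so that $[0,s-1] \subseteq L$ always supplies a valid choice of $c$.
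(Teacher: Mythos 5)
Your proposal is correct. The first half coincides with the paper's argument: both show that the $s$-shadow of $\mathcal{U}=\bigcup_{i=1}^{r-1}\mathcal{A}_i$ is disjoint from $\mathcal{S}_s(\mathcal{A}_r)$ (the paper does this by invoking Lemma \ref{3.1} to force $\mathcal{A}_i(S)=\emptyset$ for $i\in[r-1]$ when $S\in\mathcal{S}_s(\mathcal{A}_r)$, you do it by a direct count showing $|\mathcal{A}_r(S)|\le\binom{n-s}{k-s}-\binom{n-k}{k-s}$ for $S\in\partial_s(\mathcal{U})$ — contrapositive formulations of the same fact), and then both apply Lemma \ref{lem: x is O(1)} and Corollary \ref{cor} to get $|\mathcal{U}|=O(1)$. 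Where you genuinely diverge is in upgrading $O(1)$ to $1$: the paper runs a second global double-counting over $S\in\bigcup_{i<r}\partial_s(\mathcal{A}_i)$, showing $\sum_i|\mathcal{A}_i|\le\binom{n}{k}-(1-o(1))\binom{x}{s}\binom{n-s}{k-s}$ and comparing with (\ref{seq4-1}) to force $x\le k$, hence $|\mathcal{U}|\le\binom{x}{k}=1$; you instead suppose $|\mathcal{U}|\ge 2$ and exhibit, for two distinct $A_1,A_2\in\mathcal{U}$, an explicit family of $\Omega(n^{k-s})$ sets $F$ with $|F\cap A_1|=c\in[0,s-1]\subseteq L$ but $|F\cap A_2|=s\notin L$, which live inside the size-$M$ superset $\{F:|F\cap A_1|\in L\}\supseteq\mathcal{A}_r$ yet are excluded from $\mathcal{A}_r$, again contradicting (\ref{seq4-1}). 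Your case analysis on $c=\min(s-1,\ell)$ is sound and correctly exploits that $s$ is the least element missing from $L$. Your route avoids the paper's slightly delicate asymptotic shadow computation and is more self-contained, at the cost of losing the cleaner structural byproduct $x\le k$; both arguments rely on the standing extremality assumption behind (\ref{seq4-1}), which is legitimate since the paper's own proof does too.
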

\begin{proof} Recall $\partial_s(\mathcal{F}) = \left\{G \in \binom{[n]}{s} : G \subset F \text{ for some } F \in \mathcal{F}\right\}$.
Note that $\mathcal{S}_s(\mathcal{A}_r)\not=\emptyset$. Then $\mathcal{A}_r(S)\not=\emptyset$ if $S\in \mathcal{S}_s(\mathcal{A}_r)$.
Let $S\in \mathcal{S}_s(\mathcal{A}_r)$. Since
$$\sum^r_{i=1}|\mathcal{A}_i(S)|\ge |\mathcal{A}_r(S)|>\frac{3}{2}\binom{n-s}{k-s}-\binom{n-k}{k-s},$$
    by Lemma \ref{3.1} and $\mathcal{A}_r(S)\not=\emptyset$, we have $\mathcal{A}_i(S)=\emptyset$ for $i\in [r-1]$. Hence
        $\mathcal{S}_s(\mathcal{A}_r)\cap \partial_s(\mathcal{A}_i)= \emptyset$ for any $i\in[r-1]$ which implies $\binom{n}{s}-\sum^{r-1}_{i=1}|\mathcal{S}_s(\mathcal{A}_r)|=\binom{n}{s}-|\mathcal{S}_s(\mathcal{A}_r)|\geq |\bigcup^{r-1}_{i=1}\partial_s(\mathcal{A}_i)|$. Assume $|\partial_s(\bigcup^{r-1}_{i=1}\mathcal{A}_i)|=\binom{x}{s}$, where $x$ is a real number.  By Lemma \ref{lem: x is O(1)}, we have $\binom{n}{s}-\sum^{r-1}_{i=1}|\mathcal{S}_s(\mathcal{A}_r)|\leq 3\binom{k}{s}^2=O(1)$. Hence, $\binom{x}{s}\leq3\binom{k}{s}^2=O(1)$. By Corollary \ref{cor}, we have $|\bigcup^{r-1}_{i=1}\mathcal{A}_i|\leq \binom{x}{k}=O(1)$. For any $S\in \bigcup^{r-1}_{i=1}\partial_s(\mathcal{A}_i)$, say $S\in\partial_s(\mathcal{A}_1)$ and $S\subseteq K\in \mathcal{A}_1$, we have $\hat{\mathcal{A}}_1(S)\not=\emptyset$. Assume, without loss of generality, that $\hat{\mathcal{A}}_r(S)\not=\emptyset$. By the proof of  Claim $\ref{C_1}$, we have $\hat{\mathcal{A}}_1(S),\hat{\mathcal{A}}_r(S)$ are cross-intersecting. Then $|\mathcal{A}_r(S)|=|\hat{\mathcal{A}}_r(S)|\leq |\{F\in\binom{[n]\setminus S}{k-s}:F\cap K\neq \emptyset\}|\leq \binom{n-s}{k-s}-\binom{n-k}{k-s}$. Thus $\sum_{i=1}^r|\mathcal{A}_i(S)|\leq \sum_{i=1}^{r-1}|\mathcal{A}_i(S)|+|\mathcal{A}_r(S)|\leq (r-1)\binom{x}{k}+\binom{n-s}{k-s}-\binom{n-k}{k-s}=O(n^{k-s-1})$.
    Then
    \begin{align*}
         \sum_{i=1}^r|\mathcal{A}_i|
        &\leq  \sum_{i=1}^{r-1}|\mathcal{A}_i| +\left|\left\{F\in\mathcal{A}_r:\partial_s(F)\cap (\bigcup^{r-1}_{i=1}\partial_s(\mathcal{A}_i))=\emptyset\right\}\right|+\sum_{S\in \bigcup^{r-1}_{i=1}\partial_s(\mathcal{A}_i)}|\mathcal{A}_r(S)|\\
        &\leq (r-1)\binom{x}{k}+\left(\binom{n}{k}-(1-o(1))\binom{x}{s}\binom{n-s}{k-s}\right)
        +\binom{x}{s}O(n^{k-s-1})\\
        &=\binom{n}{k}-(1-o(1))\binom{n-s}{k-s}\binom{x}{s}.
    \end{align*}
   Combining with (\ref{seq4-1}), we have  $x\leq k$ which implies $|\bigcup_{i=1}^{r-1}\mathcal{A}_i|\leq \binom{x}{k}=1$. Since $\mathcal{A}_i\not=\emptyset$ for all $1\le i\le r-1$, we have $|\bigcup_{i=1}^{r-1}\mathcal{A}_i|=1$.
\end{proof}
\begin{lemma}\label{3.6}
     Let $n,r$ be positive integers with $n$ sufficiently large $($relative to $r)$ and $r\geq 2$. Let $L= \{0,2\}$. If non-empty families $\mathcal{A}_1,\dots,\mathcal{A}_r\subseteq \binom{[n]}{2}$ are pairwise cross $L$-intersecting, then
    $$\sum_{i=1}^r|\mathcal{A}_i|\leq \frac{(n-2)(n-3)}{2}+r,$$
    the equality holds only when $\mathcal{A}_1=\mathcal{A}_2=\dots=\mathcal{A}_{r-1}=\{[2]\}$ and $\mathcal{A}_r=\{A\in \binom{[n]}{2}:~|A\cap [2]|\in L\}$ (up to isomorphism).
\end{lemma}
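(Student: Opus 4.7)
The plan is to apply the machinery of Lemmas \ref{3.3}, \ref{lem: x is O(1)}, and \ref{lem: x at most k}, specialised to $k=2$, $L=\{0,2\}$, and $s=1$ (the smallest element of $[0,k]\setminus L$). In this regime the threshold defining $\mathcal{S}_1(\mathcal{F})$ simplifies to $\frac{3}{2}\binom{n-1}{1}-\binom{n-2}{1}=\frac{n+1}{2}$, while $\max\{0,2s-k-1\}=0$, so by Lemma \ref{3.3} the families $\mathcal{S}_1(\mathcal{A}_1),\dots,\mathcal{S}_1(\mathcal{A}_r)$ are pairwise disjoint subsets of $\binom{[n]}{1}$, and by Lemma \ref{lem: x is O(1)} their sizes sum to at least $n-12$. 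Thus for $n$ large enough at least one $\mathcal{S}_1(\mathcal{A}_i)$ is non-empty.

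The crucial step will be to show that in fact exactly one of the $\mathcal{S}_1(\mathcal{A}_i)$ is non-empty. Suppose for contradiction $\{x\}\in\mathcal{S}_1(\mathcal{A}_i)$ and $\{y\}\in\mathcal{S}_1(\mathcal{A}_j)$ for some $i\ne j$; the pairwise disjointness of the $\mathcal{S}_1(\mathcal{A}_i)$ forces $x\ne y$. Setting $A=\{a:\{x,a\}\in\mathcal{A}_i\}$, we have $|A|=|\mathcal{A}_i(\{x\})|>\frac{n+1}{2}$, so $|A|\ge 2$. For any two distinct $a,a'\in A$ and any $B\in\mathcal{A}_j$, the cross $\{0,2\}$-intersecting property forces $B\cap\{x,a\}\in\{\emptyset,\{x,a\}\}$ and likewise for $\{x,a'\}$; however $B=\{x,a\}$ would give $|B\cap\{x,a'\}|=1\notin L$, a contradiction. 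Hence every $B\in\mathcal{A}_j$ is disjoint from $\{x\}\cup A$, which yields $|\mathcal{A}_j(\{y\})|\le n-2-|A|$; combined with $|\mathcal{A}_j(\{y\})|>\frac{n+1}{2}$ this forces $|A|<\frac{n-5}{2}$, contradicting $|A|>\frac{n+1}{2}$.

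With exactly one $\mathcal{S}_1(\mathcal{A}_i)$ non-empty, I would relabel (which preserves $\sum_i|\mathcal{A}_i|$) so that $\mathcal{S}_1(\mathcal{A}_r)$ is that one. Lemma \ref{lem: x at most k} then yields $\bigl|\bigcup_{i=1}^{r-1}\mathcal{A}_i\bigr|=1$, so there is some $K\in\binom{[n]}{2}$, which up to isomorphism may be taken to be $\{1,2\}$, with $\mathcal{A}_1=\cdots=\mathcal{A}_{r-1}=\{K\}$. The cross $\{0,2\}$-intersecting condition with $\{K\}$ forces each $F\in\mathcal{A}_r$ to satisfy $F=K$ or $F\cap K=\emptyset$, so $\mathcal{A}_r\subseteq\{K\}\cup\binom{[3,n]}{2}$ and $|\mathcal{A}_r|\le 1+\binom{n-2}{2}$. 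Summing produces $\sum_{i=1}^r|\mathcal{A}_i|\le(r-1)+1+\binom{n-2}{2}=r+\frac{(n-2)(n-3)}{2}$, with equality precisely when $\mathcal{A}_r=\{K\}\cup\binom{[3,n]}{2}$, which is the extremal configuration stated in the lemma.

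The main obstacle is the contradiction argument in the second paragraph: the restrictive shape $L=\{0,2\}$ must be exploited to convert the simultaneous existence of two singletons in different $\mathcal{S}_1(\mathcal{A}_i)$'s into incompatible lower and upper bounds on $|A|$. The key observation—that two edges $\{x,a\},\{x,a'\}\in\mathcal{A}_i$ meeting at $x$ prevent any $B\in\mathcal{A}_j$ from touching $\{x\}\cup A$—is peculiar to $L$ containing $\{0,k\}$ but not $1$, and is what allows us to rule out the multi-vertex case and pin down the extremal structure.
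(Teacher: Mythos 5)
Your proposal is correct and follows essentially the same route as the paper: specialise Lemmas \ref{3.3}, \ref{lem: x is O(1)} and \ref{lem: x at most k} to $s=1$, show exactly one $\mathcal{S}_1(\mathcal{A}_i)$ is non-empty, and conclude via $|\bigcup_{i=1}^{r-1}\mathcal{A}_i|=1$. The only (harmless) deviation is in ruling out two non-empty $\mathcal{S}_1(\mathcal{A}_i)$'s: the paper uses a direct pigeonhole on the two neighbourhoods ($|\mathcal{B}_i|+|\mathcal{B}_j|>n-1>n-2$ forces a common neighbour $x$ with $|\{x,i\}\cap\{x,j\}|=1\notin L$), whereas you derive that all of $\mathcal{A}_j$ must avoid the star at $x$, reaching the same contradiction.
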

\begin{proof} Assume $\sum_{i=1}^r|\mathcal{A}_i|$ is maximized. By  (\ref{seq4-1}), we  have
       $\sum_{i=1}^r|\mathcal{A}_r|\geq \frac{(n-2)(n-3)}{2}+r$. If $\mathcal{S}_1(\mathcal{A}_1),\mathcal{S}_1(\mathcal{A}_2),\ldots,\mathcal{S}_1(\mathcal{A}_r)$ are all empty, by (\ref{09}),
       \begin{align*}
           \sum_{i=1}^r|\mathcal{A}_r|=\frac{1}{2}\sum_{i\in[n]}|\mathcal{A}_r(\{i\})|
           \leq \frac{1}{2}\sum_{i\in[n]}(\frac{3}{2}(n-1)-(n-2))\leq \frac{1}{4}n^2+\frac{1}{4}n<\frac{(n-2)(n-3)}{2}+r,
       \end{align*}
    yielding a contradiction. Thus, at least one of $\mathcal{S}_1(\mathcal{A}_1),\mathcal{S}_1(\mathcal{A}_2),\ldots,\mathcal{S}_1(\mathcal{A}_r)$ is non-empty.

    We first show that there are exactly $r-1$ empty families in
     $\mathcal{S}_1(\mathcal{A}_1),\mathcal{S}_1(\mathcal{A}_2),\ldots,\mathcal{S}_1(\mathcal{A}_r)$. Otherwise, at least two of them are non-empty. Without loss of generality, assume $\mathcal{S}_1(\mathcal{A}_1)$, $\mathcal{S}_1(\mathcal{A}_2)$ are non-empty and $\{i\}\in \mathcal{S}_1(\mathcal{A}_1)$, $\{j\}\in \mathcal{S}_1(\mathcal{A}_2)$. Then, $i\neq j$ follows from Lemma \ref{3.3}. Define $\mathcal{B}_i=\left\{x\in [n]\setminus\{i,j\}:\{i,x\}\in \mathcal{A}_1\right\}$ and $\mathcal{B}_j=\left\{x\in [n]\setminus\{i,j\}:\{j,x\}\in \mathcal{A}_2\right\}$. And we have $|\mathcal{B}_i|+|\mathcal{B}_j|>2(\frac{1}{2}(n+1)-1)= n-1$. Thus, we can select $x\in \mathcal{B}_i\cap \mathcal{B}_j$ such that $|\{x,i\}\cap\{x,j\}|=1\notin L$ where $\{x,i\}\in \mathcal{A}_1$, $\{x,j\}\in \mathcal{A}_2$, a contradiction.

   By Lemma \ref{lem: x at most k}, when $n$ is large enough, we have $|\bigcup_{i=1}^{r-1}\mathcal{A}_i|=1$. It is immediately obtained that $\mathcal{A}_i$ are identical singletons for $i\in[1,r-1]$, which ends the proof.
\end{proof}

\subsection{The case when $0\not\in L$}
 By Theorem \ref{t-intersecting}, we only need to consider the case $L=\{\ell_1,\dots,\ell_t\}\neq [\ell_1,k], \ell_1< \ell_2<\dots< \ell_t$ and $\ell_1\neq 0$.

First, we present a result which is helpful in the proof.
\begin{theorem}[\cite{Erdos1965APO}]\label{thm: matchings}
    For positive integers $s$ and $k$, there exist $n_0=n_0(s,k)$, such that when $n\geq n_0$, if a family $\mathcal{F}\subseteq \binom{[n]}{k}$ with $|\mathcal{F}|\geq \binom{n}{k}-\binom{n-s}{k}=D(s,k)n^{k-1}+O(n^{k-2})$ for some $D(s,k)$, then there exist at least $s$ disjoint sets in $\mathcal{F}$.
\end{theorem}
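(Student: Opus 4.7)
The plan is to derive Theorem \ref{thm: matchings} from Erd\H{o}s's classical matching theorem (1965), whose standard proof I would sketch as follows. Suppose for contradiction that $\mathcal{F} \subseteq \binom{[n]}{k}$ satisfies $|\mathcal{F}| \geq \binom{n}{k} - \binom{n-s}{k}$ but contains no $s$ pairwise disjoint sets, i.e.\ the matching number satisfies $\nu(\mathcal{F}) \leq s-1$.

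First I would apply the iterated left-shifts $L_{ij}$ (for $1 \leq i < j \leq n$) until $\mathcal{F}$ becomes shifted. These compression operators preserve $|\mathcal{F}|$ and do not increase $\nu(\mathcal{F})$, so I may assume $\mathcal{F}$ is shifted with $\nu(\mathcal{F}) \leq s-1$. The core structural lemma (this is the Erd\H{o}s--Ko--Rado-style step) is that for $n \geq n_0(s,k)$, every member of a shifted $k$-uniform family with matching number at most $s-1$ must intersect $[s-1]$. The argument is by contradiction: if some $F \in \mathcal{F}$ avoids $[s-1]$, then repeatedly invoking shiftedness allows one to replace elements of $F$ by smaller labels lying outside a growing partial matching, and thereby extract $s$ pairwise disjoint members of $\mathcal{F}$, contradicting the bound on $\nu(\mathcal{F})$. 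This extraction requires $n$ to be large enough that $[n] \setminus [s-1]$ can accommodate $s$ disjoint $k$-sets, which is precisely where the hypothesis $n \geq n_0(s,k)$ enters.

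Granted the lemma, $\mathcal{F} \subseteq \{F \in \binom{[n]}{k} : F \cap [s-1] \neq \emptyset\}$, so
\[
|\mathcal{F}| \leq \binom{n}{k} - \binom{n-s+1}{k}.
\]
Since $\binom{n-s+1}{k} > \binom{n-s}{k}$ whenever $n-s+1 \geq k$ (which certainly holds for $n \geq n_0$), this gives $|\mathcal{F}| < \binom{n}{k} - \binom{n-s}{k}$, contradicting the hypothesis.

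The main obstacle is the shifted-family structural lemma; it is routine in spirit but requires careful bookkeeping of the disjointness-extraction procedure, and is the only place where the threshold $n_0(s,k)$ is actually used. Once that lemma is established, the deduction of Theorem \ref{thm: matchings} from the strict inequality $\binom{n-s+1}{k} > \binom{n-s}{k}$ is immediate, and one can in fact read off an explicit admissible $n_0(s,k)$ from the extraction argument.
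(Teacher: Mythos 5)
First, note that the paper offers no proof of this statement at all: Theorem \ref{thm: matchings} is imported verbatim from Erd\H{o}s's 1965 paper and used as a black box, so your proposal can only be judged on its own merits.

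The strategy (shift, then control the shifted family, then count) is the standard modern route to Erd\H{o}s's matching theorem, and the shifting step is fine: the compressions preserve $|\mathcal{F}|$ and do not increase the matching number. But your core structural lemma --- ``for $n\ge n_0(s,k)$, every member of a shifted $k$-uniform family with matching number at most $s-1$ meets $[s-1]$'' --- is false, and the extraction argument you sketch cannot be made to work. Take $s=2$, $k=2$ and $\mathcal{F}=\{\{1,2\},\{1,3\},\{2,3\}\}$ inside $\binom{[n]}{2}$ for any large $n$: this family is shifted and intersecting (so $\nu(\mathcal{F})=1\le s-1$), yet $\{2,3\}$ is disjoint from $[s-1]=\{1\}$. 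The failure of the greedy extraction is visible here: shiftedness lets you push elements of $\{2,3\}$ down to $1$, but the resulting sets $\{1,2\}$ and $\{1,3\}$ collide with each other, so no two disjoint members can be produced. Consequently the containment $\mathcal{F}\subseteq\{F:F\cap[s-1]\neq\emptyset\}$ does not follow, and your final counting step collapses.

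The correct version of the lemma (due to Frankl) is weaker: if $\mathcal{F}$ is shifted with $\nu(\mathcal{F})\le s-1$, then every $F=\{f_1<\cdots<f_k\}\in\mathcal{F}$ satisfies $f_i\le si-1$ for some $i\in[k]$. The members with $f_1\le s-1$ are exactly those meeting $[s-1]$, and there are at most $\binom{n}{k}-\binom{n-s+1}{k}$ of them; the remaining members must satisfy $f_i\le si-1$ for some $i\ge 2$, which forces two coordinates into a bounded range and hence contributes only $O(n^{k-2})$ sets. Thus
\[
|\mathcal{F}|\;\le\;\binom{n}{k}-\binom{n-s+1}{k}+O(n^{k-2})\;<\;\binom{n}{k}-\binom{n-s}{k}
\]
for large $n$, because the slack $\binom{n-s+1}{k}-\binom{n-s}{k}=\binom{n-s}{k-1}=\Theta(n^{k-1})$ absorbs the error term. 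So your overall architecture is repairable, but as written the decisive lemma is wrong and the proof has a genuine gap.
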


In the following discussion, we set $M=M(k,L)=\max\{ C(k,[\ell_1,k]),D(k+1,k-\ell_1)+1\}$, where $C(k,[\ell_1,k])$ is described in Theorem \ref{thm: frankl intersect}, and $D(k+1,k-\ell_1)$ is described in Theorem \ref{thm: matchings}.

\begin{lemma}\label{claim: l1 intersecting}
    Let  $\mathcal{A}_1,\dots,\mathcal{A}_r\subseteq \binom{[n]}{k}$ be pairwise cross $L$-intersecting non-empty families, where $L=\{\ell_1,\dots,\ell_t\}\neq [\ell_1,k], \ell_1< \ell_2< \dots< \ell_t$ and $\ell_1\neq 0$. Assume $|\mathcal{A}_1|\leq |\mathcal{A}_2|\leq \dots \leq |\mathcal{A}_r|$. If $n$ is large enough and $\sum_{i=1}^r|\mathcal{A}_r|\geq \sum_{i\in L}\binom{k}{i}\binom{n-k}{k-i}+r-1$, we have the following properties:

    \noindent \textbf{\rm(i)} $\mathcal{A}_i$ is $\ell_1$-intersecting for any $1\leq i\leq r-1$.

    \noindent \textbf{\rm(ii)} If $|\mathcal{A}_i|\geq M n^{k-\ell_1-1}$ for some $1\leq i\leq r-1$, then there exists a set $X$ with size $\ell_1$ such that $X\subseteq A$ for every $A\in \bigcup_{j=1}^r \mathcal{A}_j$.
\end{lemma}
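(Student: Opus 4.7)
The plan is to handle (i) via a counting contradiction against $\mathcal{A}_r$, and then derive (ii) from (i) by combining Theorem \ref{thm: frankl intersect} with Theorem \ref{thm: matchings}.

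For (i), I would argue by contradiction: suppose some $\mathcal{A}_i$ with $i\le r-1$ fails to be $\ell_1$-intersecting, so there exist $A,B\in\mathcal{A}_i$ with $j:=|A\cap B|<\ell_1$. Every $C\in\mathcal{A}_r$ must satisfy $|A\cap C|,|B\cap C|\ge\ell_1$ by the pairwise cross $L$-intersecting property (since $L\subseteq[\ell_1,k]$), which forces $|C\cap(A\cup B)|\ge 2\ell_1-j\ge\ell_1+1$. Counting such $k$-sets $C$ yields
$$|\mathcal{A}_r|\le\binom{|A\cup B|}{\ell_1+1}\binom{n-\ell_1-1}{k-\ell_1-1}=O(n^{k-\ell_1-1}).$$
On the other hand, the leading term of the assumed lower bound on $\sum_i|\mathcal{A}_i|$ is $\binom{k}{\ell_1}\binom{n-k}{k-\ell_1}=\Theta(n^{k-\ell_1})$, so $|\mathcal{A}_r|\ge\frac{1}{r}\sum_i|\mathcal{A}_i|=\Omega(n^{k-\ell_1})$. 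For $n$ sufficiently large the two estimates contradict each other.

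For (ii), part (i) gives that $\mathcal{A}_i$ is $[\ell_1,k]$-intersecting. Using $|\mathcal{A}_i|\ge Mn^{k-\ell_1-1}\ge C(k,[\ell_1,k])\,n^{k-\ell_1-1}$, Theorem \ref{thm: frankl intersect} supplies a common core of size at least $\ell_1$ for $\mathcal{A}_i$, and I would shrink it to a set $X$ of size exactly $\ell_1$. Now $\hat{\mathcal{A}}_i(X)\subseteq\binom{[n]\setminus X}{k-\ell_1}$ has cardinality $|\mathcal{A}_i|\ge(D(k+1,k-\ell_1)+1)n^{k-\ell_1-1}$, which for large $n$ exceeds the Erd\H{o}s matching threshold $\binom{n-\ell_1}{k-\ell_1}-\binom{n-\ell_1-(k+1)}{k-\ell_1}$. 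Theorem \ref{thm: matchings} then produces $k+1$ pairwise disjoint sets $F_1,\dots,F_{k+1}\in\hat{\mathcal{A}}_i(X)$, each of size $k-\ell_1$. For any $j\ne i$ and any $A\in\mathcal{A}_j$, since $|A|=k<k+1$, at least one $F_a$ is disjoint from $A$; then $X\cup F_a\in\mathcal{A}_i$ and pairwise cross $L$-intersection give $|(X\cup F_a)\cap A|=|X\cap A|\in L$. Because $|X\cap A|\le|X|=\ell_1=\min L$, equality holds and $X\subseteq A$, completing the proof.

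The delicate step is the calibration of $M$ relative to Theorem \ref{thm: frankl intersect}: the threshold as stated there is $Cn^{|L|-1}=Cn^{k-\ell_1}$ for $L=[\ell_1,k]$, whereas the hypothesis only supplies $|\mathcal{A}_i|\ge Mn^{k-\ell_1-1}$. I rely here on the sharper form of the Deza--Erd\H{o}s--Frankl theorem for $t$-intersecting families, which forces a common $t$-core already at threshold $\Omega(n^{k-t-1})$; the constant $C(k,[\ell_1,k])$ in the definition of $M$ should be taken as one witnessing this sharper bound. The rest of the argument is routine counting plus Theorem \ref{thm: matchings}, and I expect no further obstacle.
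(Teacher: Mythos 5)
Your proposal is correct and follows essentially the same route as the paper: part (i) by the identical counting contradiction against $|\mathcal{A}_r|$, and part (ii) by combining Theorem \ref{thm: frankl intersect} with Theorem \ref{thm: matchings} and the observation that one of the $k+1$ disjoint sets must miss any given $k$-set. The calibration issue you flag is real but afflicts the paper's own proof equally, and your fix is the intended one: since distinct $k$-sets meet in at most $k-1$ elements, an $\ell_1$-intersecting family is $[\ell_1,k-1]$-intersecting, and applying Theorem \ref{thm: frankl intersect} with that $L$ gives the threshold exponent $|L|-1=k-\ell_1-1$.
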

\noindent\textit{Proof } (i)
       Suppose there is $ i\in [r-1]$, say $i=1$ and $A_1,A_2\in \mathcal{A}_1$ such that $|A_1\cap A_2|\leq \ell_1-1$.
    Then we have $|F\cap (A_1\cup A_2)|\geq \ell_1+1$ for every $F\in \mathcal{A}_r$. Hence
    $$|\mathcal{A}_r|\leq \sum_{i=\ell_1+1}^k\binom{|A_1\cup A_2|}{i}\binom{n-|A_1\cup A_2|}{k-i}=O(n^{k-\ell_1-1}),$$
   and then $\sum_{i=1}^{r}|\mathcal{A}_i|=O(n^{k-\ell_1-1})$, a contradiction with the assumption.

\noindent(ii)
    Assume there is $i\in[r-1]$ such that $|\mathcal{A}_i|\ge Mn^{k-\ell_1-1}$. By (i) and  Theorem \ref{thm: frankl intersect}, there exist a set $X$ with size $\ell_1$ such that $X\subseteq \cap_{A\in \mathcal{A}_i}A$.  Then $|\hat{\mathcal{A}}_i(X)|\geq (D(k+1,k-\ell_1)+1)n^{k-\ell_1-1}$. By Theorem \ref{thm: matchings}, there exist $k+1$  disjoint sets (denoted as $B_1,\dots,B_{k+1}$) in $\hat{\mathcal{A}}_i(X)$ when $n$ is large enough.

    Suppose there is $A\in \mathcal{A}_j$ with $j\in[r]\setminus\{i\}$  such that $A\cap X\neq X$. Then there is $t\in [k+1]$ such that $|A\cap (X\cup B_t)|<\ell_1$. Since $X\cup B_t\in \mathcal{A}_i$,  a contradiction with (i).\hfill$\square$
\vskip.2cm

\subsection{Proof of Theorem \ref{thm: r cross L intersecting}}
Now we complete the proof of Theorem \ref{thm: r cross L intersecting}.
\vskip.2cm

\noindent\text{\em Proof of Theorem \ref{thm: r cross L intersecting}} Let $\mathcal{A}_1,\dots,\mathcal{A}_r$ be pairwise cross $L$-intersecting families with $k\in L$ which obtain the maximum $\sum^r_{i=1}|\mathcal{A}_i|$, where $L=\{\ell_1,\dots,\ell_t\}\neq [\ell_1,k]$ with $\ell_1< \ell_2< \dots< \ell_t$.
 We complete the proof by consider two cases.


\noindent{\bf Case 1.} $\ell_1=0$.

In this case, we proceed by induction on $k$.  
By Lemma \ref{3.6}, together with the trivial case when $L=[1]$ and $L=[2]$, the conclusion holds for $k=2$. Assume $k\geq 3$. Let $s$ be the minimum non-negative integer such that $s\notin L$. Then $1\le s\le k-1$.
 By (\ref{seq4-1}), we have $\sum_{i=1}^r|\mathcal{A}_i|\geq \sum_{i\in L}\binom{k}{i}\binom{n-k}{k-i}+r-1$.

 We claim that there are exactly $r-1$ empty families in $\mathcal{S}_s(\mathcal{A}_1),\dots,\mathcal{S}_s(\mathcal{A}_r)$.

 If $\mathcal{S}_s(\mathcal{A}_1),\dots,\mathcal{S}_s(\mathcal{A}_r)$ are all empty, by (\ref{09})
 \begin{align*}
     \sum_{i=1}^r|\mathcal{A}_i|&=\binom{k}{s}^{-1}\sum_{S\in\binom{[n]}{s}}|\sum_{i=1}^r|\mathcal{A}_i(S)||\leq \binom{k}{s}^{-1}\binom{n}{s}\left(\frac{3}{2}\binom{n-s}{k-s}-\binom{n-k}{k-s}\right)\\
     &<\sum_{i\in L}\binom{k}{i}\binom{n-k}{k-i}+r-1,
 \end{align*}
 yielding a contradiction.

 Now, consider that there are at least two families
 in $\mathcal{S}_s(\mathcal{A}_1),\mathcal{S}_s(\mathcal{A}_2),\ldots,\mathcal{S}_s(\mathcal{A}_r)$ are non-empty. By Lemma \ref{3.3}, $\mathcal{S}_s(\mathcal{A}_1)$,$\mathcal{S}_s(\mathcal{A}_2)$, $\ldots,\mathcal{S}_s(\mathcal{A}_r)$ are pairwise cross-$[0,\max\{0,2s-k-1\}]$-intersecting, and also $[0,\max\{0,2s-k-1\}]\cup\{s\}$-intersecting. Set $L'=[0,\max\{0,2s-k-1\}]\cup\{s\}$. Note that $s-1\notin L'$ by $s\leq k-1$. Then $L'\subseteq [0,s-2]\cup\{s\}\subseteq[0,s]$.
 By induction hypothesis, we have
 \begin{equation}\label{eq:1}
     \sum_{i=1}^r|\mathcal{S}_s(\mathcal{A}_i)|\leq \binom{n}{s}-\binom{s}{s-1}\binom{n-s}{1}+r-1=\binom{n}{s}-s(n-s)+r-1.
 \end{equation}
 Set $\sum_{i=1}^r|\mathcal{S}_s(\mathcal{A}_i)|=\binom{n}{s}-y$, then we have $y\geq s(n-s)-r+1$ by (\ref{eq:1}), a contradiction with Lemma \ref{lem: x is O(1)} when $n$ is large enough. Hence there are exactly $r-1$ empty families in $\mathcal{S}_s(\mathcal{A}_1),\dots,\mathcal{S}_s(\mathcal{A}_r)$.

 Assume $\mathcal{S}_s(\mathcal{A}_i)=\emptyset$ for any $i\in[r-1]$.
    By Lemma \ref{lem: x at most k}, $|\bigcup_{i=1}^{r-1}\mathcal{A}_i|=1$, it is immediately obtained that $\mathcal{A}_i$ are identical singletons for $i\in[r-1]$ and we are done.

\noindent{\bf Case 2.}  $\ell_1\not=0$.

We first show that $|\mathcal{A}_{i}|=1$ for any $i\in[r-1]$.
  Suppose there is $i\in [r-1]$, say $i=r-1$, such that $|\mathcal{A}_{r-1}|\geq 2$. Let $A_1,A_2\in \mathcal{A}_{r-1}$.
Then we have
\begin{align*}
        |\mathcal{A}_r|&=\sum^k_{i=\ell_1}\left|\left\{F\in \binom{[n]}{k}:|F\cap A_1|,|F\cap A_2|\in L,|F\cap(A_1\cup A_2)|=i\right\}\right|\\
        &=\left|\left\{F\in \binom{[n]}{k}:|F\cap A_1|,|F\cap A_2|\in L,|F\cap(A_1\cup A_2)|=\ell_1\right\}\right|+O(n^{k-\ell_1-1})\\
        &= \binom{|A_1\cap A_2|}{\ell_1}\binom{n-|A_1\cup A_2|}{k-\ell_1}+O(n^{k-\ell_1-1})\\
        &\leq  \binom{k-1}{\ell_1}\binom{n-(k+1)}{k-\ell_1}+O(n^{k-\ell_1-1}).
    \end{align*}
Since  $\sum_{i=1}^r|\mathcal{A}_i|\geq\sum_{i\in L}\binom{k}{i}\binom{n-k}{k-i}+r-1= \binom{k}{\ell_1}\binom{n-k}{k-\ell_1}-O(n^{k-\ell_1-1})$ by (\ref{seq4-1}), we have
\begin{align*}
    \sum_{i=1}^{r-1}|\mathcal{A}_i|&\geq \left( \binom{k}{\ell_1}-\binom{k-1}{\ell_1} \right)\binom{n-(k+1)}{k-\ell_1}-O(n^{k-\ell_1-1})\\
    &=\binom{k-1}{\ell_1-1}\binom{n-(k+1)}{k-\ell_1}-O(n^{k-\ell_1-1}).
\end{align*}

 For a fixed constant $M\geq \max\{C(k,[\ell_1,k]),D(k+1,k-\ell_1)+1\}$, where $C(k,[\ell_1,k])$ is described in Theorem \ref{thm: frankl intersect} and $D(k+1,k-\ell_1)$ is described in Theorem \ref{thm: matchings}, $|\mathcal{A}_i|\geq Mn^{k-\ell_1-1}$ holds for some $i\in[1,r-1]$. By Lemma \ref{claim: l1 intersecting} (ii), there exists a set $X$ with size $\ell_1$ such that $X\subseteq A$ for every $A\in \bigcup_{j=1}^r \mathcal{A}_j$. Define $\mathcal{A}_i^0=\{F\setminus X:F\in \mathcal{A}_i\}$ and $L'=\{x-\ell_1:x\in L\}$.
Then  $\mathcal{A}_1^0,\dots,\mathcal{A}_r^0$ are $(k-\ell_1)$-uniform pairwise cross $L'$-intersecting families and $0\in L'$. By Case 1, we have
\begin{align*}
    \sum_{i=1}^r|\mathcal{A}_i|=\sum_{i=1}^r|\mathcal{A}_i^0|&\leq\sum_{i\in L'}\binom{k-\ell_1}{i}\binom{n-k}{k-\ell_1-i}+(r-1)\\
    &<\sum_{i\in L}\binom{k}{i}\binom{n-k}{k-i}+r-1,
\end{align*}
 a contradiction with (\ref{seq4-1}).

Thus we have $|\mathcal{A}_i|=1$ for $i\in [r-1]$. When $r\geq 3$, we may assume $\mathcal{A}_1=\dots=\mathcal{A}_{r-1}$, otherwise we can similarly have $|\mathcal{A}_r|\leq \binom{k-1}{\ell_1}\binom{n-(k+1)}{k-\ell_1}+O(n^{k-\ell_1-1})$ and then
$$\sum_{i=1}^r|\mathcal{A}_i|\leq \binom{k-1}{\ell_1}\binom{n-(k+1)}{k-\ell_1}+O(n^{k-\ell_1-1})<\binom{k}{\ell_1}\binom{n-k}{k-\ell_1}-O(n^{k-\ell_1-1}),$$
a contradiction with (\ref{seq4-1}).

Hence, we obtain that $\mathcal{A}_1=\dots=\mathcal{A}_{r-1}=\{A\}$, which completes the proof. \hfill$\square$

\section{$r$-cross $L$-intersecting families}
\noindent\text{\em Proof of Theorem \ref{thm: cross interval intersecting}} Let $\mathcal{A}_1,\dots,\mathcal{A}_r\subseteq\binom{[n]}{k}$ be a non-empty $r$-cross $[\ell,s-1]$-intersecting families that attain the maximum value of $\sum_{i=1}^r|\mathcal{A}_i|$, where $s\in[k]$.
Let $\mathcal{A}_1^0=\{[k]\}$, $\mathcal{A}_2^0=\{A\in\binom{[n]}{k}:|A\cap [k]|\leq s-1, [\ell]\subseteq A\}$ and $\mathcal{A}_3^0=\dots=\mathcal{A}_r^0=\{A\in \binom{[n]}{k}:[\ell]\subseteq A\}$ (if $\ell=0$, let $[\ell]=\emptyset$). It is easy to check $\mathcal{A}_1^0,\dots,\mathcal{A}_r^0$ are $r$-cross $[\ell,s-1]$-intersecting. Thus we have
\begin{align}\label{e4-1}
\sum_{i=1}^r|\mathcal{A}_i|\geq \sum_{i=1}^r|\mathcal{A}_i^0|=(r-1)\binom{n-\ell}{k-\ell}-\sum_{i=s-\ell}^{k-\ell}\binom{k-\ell}{i}\binom{n-\ell-k}{k-\ell-i}+1.\end{align}
We will divide the proof into two cases, when $\ell=0$ and $\ell>0$. Let $L=[\ell,s-1]$.

\noindent{\bf Case 1.} $\ell=0$.

In this case, we will prove $$\sum_{i=1}^r|\mathcal{A}_i|\leq (r-1)\binom{n}{k}-\sum_{i=s}^{k}\binom{k}{i}\binom{n-k}{k-i}+1.$$
We prove it by induction on $k$. When $k=1$, we have $L=\{0\}$. Since every $i\in[n]$ is contained in at most $r-1$ families of $\mathcal{A}_1,\dots,\mathcal{A}_r$,  we have $$\sum_{i=1}^r|\mathcal{A}_i|\leq (r-1)n=(r-1)\binom{n}{1}-\binom{1}{1}\binom{n-1}{1-1}+1,$$ and we are done. Then we assume $k\geq 2$, and the result holds for every $\ell\leq k-1$.

Let $t\in [k]$,  $S\in \binom{[n]}{t}$ and  $\mathcal{F}\subseteq\binom{[n]}{k}$. Recall $\mathcal{F}(S)=\{F\in \mathcal{F}:S\subseteq F\}$ and $\hat{\mathcal{F}}(S)=\{F\setminus S:F\in \mathcal{F}(S)\}$. By the definition of $r$-cross $[0,s-1]$-intersecting, for every $S\in \binom{[n]}{s}$, at least one of $\mathcal{A}_1(S),\dots,\mathcal{A}_r(S)$ is empty which implies $\sum_{i=1}^r|\mathcal{A}_i(S)|\leq (r-1)\binom{n-s}{k-s}$.
We set
$$\mathcal{T}_s(\mathcal{F})=\left\{T\in \binom{[n]}{s}:|\mathcal{F}(T)|\geq \binom{n-s}{k-s}-\binom{n-k}{k-s}\right\},$$
and  $\mathcal{S}=\left\{ S\in \binom{[n]}{s}: \sum_{i=1}^r|\mathcal{A}_i(S)|\geq (r-1)\binom{n-s}{k-s}-\binom{n-k}{k-s}\right\}$.
Then we have the following result.
\begin{claim}\label{lm-4-1}
    At least one of $\mathcal{T}_s(\mathcal{A}_1),\dots,\mathcal{T}_s(\mathcal{A}_r)$ is empty, and $|\mathcal{S}|\geq \binom{n}{s}-\binom{k}{s}^2.$
\end{claim}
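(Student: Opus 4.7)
The plan is to handle the two assertions separately; I will get the $|\mathcal{S}|$ bound first by a clean double count, and then address the emptiness of some $\mathcal{T}_s(\mathcal{A}_i)$ by a contradiction argument, which is where the main subtlety lies.

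For the $|\mathcal{S}|$ bound I would double count
\[
\binom{k}{s}\sum_{i=1}^r |\mathcal{A}_i| \;=\; \sum_{S\in\binom{[n]}{s}}\sum_{i=1}^r|\mathcal{A}_i(S)|.
\]
The $r$-cross $[0,s-1]$-intersecting hypothesis forces at least one summand $|\mathcal{A}_i(S)|$ to vanish for every $S$, so $\sum_{i=1}^r|\mathcal{A}_i(S)|\leq (r-1)\binom{n-s}{k-s}$ always, and for $S\notin\mathcal{S}$ the definition of $\mathcal{S}$ together with integrality sharpens this to $\sum_{i=1}^r|\mathcal{A}_i(S)|\leq (r-1)\binom{n-s}{k-s}-\binom{n-k}{k-s}-1$. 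Splitting the double count over $\mathcal{S}$ and its complement, using the identity $\binom{n-s}{k-s}\binom{n}{s}=\binom{n}{k}\binom{k}{s}$, and inserting the construction lower bound~(\ref{e4-1}), one obtains
\[
\left(\binom{n}{s}-|\mathcal{S}|\right)\!\left(\binom{n-k}{k-s}+1\right)\leq \binom{k}{s}\sum_{i=s}^k\binom{k}{i}\binom{n-k}{k-i}-\binom{k}{s}.
\]
The right-hand side equals $\binom{k}{s}^2\binom{n-k}{k-s}$ plus lower-order terms in $n$, so dividing through and using that the left-hand side is an integer gives $\binom{n}{s}-|\mathcal{S}|\leq \binom{k}{s}^2$ once $n$ is sufficiently large.

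For the emptiness assertion I would argue by contradiction: assume $T_i\in \mathcal{T}_s(\mathcal{A}_i)$ exists for every $i\in[r]$. The easy case is $T_1=\cdots=T_r=:T$, where each $\mathcal{A}_i(T)\neq\emptyset$, so any $(A_1,\dots,A_r)\in\prod_i \mathcal{A}_i(T)$ satisfies $T\subseteq \bigcap_iA_i$ and hence $|\bigcap_i A_i|\geq s$, violating the $r$-cross $[0,s-1]$-intersecting hypothesis. Otherwise some $T_i\neq T_j$, and I would exploit the size bound $|\hat{\mathcal{A}}_i(T_i)|\geq \binom{n-s}{k-s}-\binom{n-k}{k-s}$—which leaves at most $\binom{n-k}{k-s}$ omitted $(k-s)$-subsets in $\binom{[n]\setminus T_i}{k-s}$—to choose, for each $i$, some $V_i\in \hat{\mathcal{A}}_i(T_i)$ so that the sets $A_i=T_i\cup V_i$ share a common $s$-subset inside $\bigcap_i A_i$, again contradicting the $r$-cross bound.

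The main obstacle is this last step of the emptiness argument: the threshold $\binom{n-s}{k-s}-\binom{n-k}{k-s}$ is exactly at the Kruskal--Katona level, so a naive coordinate-wise pigeonhole on each $i$ does not immediately deliver a simultaneous choice of $V_i$'s forcing the common intersection to include an $s$-set. I expect the proof to require a case analysis on the overlap pattern $\{|T_i\cap T_j|\}_{i<j}$, and in particular on $|\bigcap_i T_i|$, combined with Corollary~\ref{cor} applied to the complements of the $\hat{\mathcal{A}}_i(T_i)$, in order to produce $V_i$'s that simultaneously fill in a common $s$-set missing from $\bigcap_i T_i$ while still lying inside each $\hat{\mathcal{A}}_i(T_i)$.
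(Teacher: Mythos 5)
Your first half---the double count giving $|\mathcal{S}|\ge\binom{n}{s}-\binom{k}{s}^2$---is essentially the paper's own argument (you are in fact a little more careful with the integrality step), so that part is fine. The gap is exactly where you flag it, and I believe it is fatal to your route rather than merely subtle. Membership of $T_i$ in $\mathcal{T}_s(\mathcal{A}_i)$ only guarantees $|\hat{\mathcal{A}}_i(T_i)|\ge\binom{n-s}{k-s}-\binom{n-k}{k-s}$, and since $\binom{n-s}{k-s}$ and $\binom{n-k}{k-s}$ are both $\Theta(n^{k-s})$ while their difference is only $\Theta(n^{k-s-1})$, the guaranteed density of $\hat{\mathcal{A}}_i(T_i)$ inside $\binom{[n]\setminus T_i}{k-s}$ is only $O(1/n)$. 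The threshold is met exactly by a family of the form $\{V: V\cap K_i\neq\emptyset\}$ for a fixed $(k-s)$-set $K_i$, and nothing in the definition of $\mathcal{T}_s$ prevents $\hat{\mathcal{A}}_i(T_i)$ from omitting \emph{every} $V$ containing a prescribed element. So no pigeonhole or shifting argument performed on a single $r$-tuple $(T_1,\dots,T_r)$ can force the $V_i$'s to supply the missing common elements of an $s$-set; I do not see how your second step can be completed from the size information alone.

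The paper proves the emptiness assertion by a completely different, global argument that \emph{uses} the first half of the claim instead of treating the two assertions independently. It first shows every $S\in\mathcal{S}$ lies in exactly $r-1$ of the families $\mathcal{T}_s(\mathcal{A}_i)$: at least one $\mathcal{A}_i(S)$ is empty (else the $r$-cross condition fails on $S$), so $S$ misses at least one $\mathcal{T}_s(\mathcal{A}_i)$; and if it missed two, then $\sum_i|\mathcal{A}_i(S)|\le(r-2)\binom{n-s}{k-s}+\binom{n-s}{k-s}-\binom{n-k}{k-s}-1$, contradicting $S\in\mathcal{S}$. Combined with $|\mathcal{S}|\ge\binom{n}{s}-\binom{k}{s}^2$ this yields $\sum_i|\mathcal{T}_s(\mathcal{A}_i)|\ge(r-1)\binom{n}{s}-(r-1)\binom{k}{s}^2$. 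It then notes that $\mathcal{T}_s(\mathcal{A}_1),\dots,\mathcal{T}_s(\mathcal{A}_r)$ are themselves $s$-uniform $r$-cross $[0,s-1]$-intersecting families (your ``all $T_i$ equal'' observation) and appeals to the induction hypothesis on the uniformity to cap this sum if all $r$ of them were nonempty. You should be aware, however, that this closing step of the paper deserves scrutiny: for $s$-uniform $[0,s-1]$-intersecting families the inductive bound is $(r-1)\binom{n}{s}$ (the correction term $\sum_{i=s}^{s}\binom{s}{i}\binom{n-s}{s-i}$ equals $1$, not the $\binom{k}{s}\binom{n-k}{k-s}$ the paper writes), and $(r-1)\binom{n}{s}$ does not contradict the lower bound just obtained, so the paper's printed inequality also appears to need repair.
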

\noindent{\bf Proof of Claim \ref{lm-4-1}}
    First,  we have the following inequality.
    \begin{align*}
    \sum_{i=1}^r|\mathcal{A}_i|&\leq \binom{k}{s}^{-1}\sum_{S\in \mathcal{S}}\sum_{i=1}^r|\mathcal{A}_i(S)|+\binom{k}{s}^{-1}\sum_{S\in\binom{[n]}{s}\setminus \mathcal{S}}\sum_{i=1}^r|\mathcal{A}_i(S)|\\
    &\leq (r-1)\binom{k}{s}^{-1}\binom{n-s}{k-s}|\mathcal{S}|\\&+\binom{k}{s}^{-1}\left((r-1)\binom{n-s}{k-s}-\binom{n-k}{k-s} \right)\left( \binom{n}{s}-|\mathcal{S}|\right ).
    \end{align*}
    Combining with (\ref{e4-1}), we have
        $$\binom{n-k}{k-s}|\mathcal{S}|\geq \binom{n}{s}\binom{n-k}{k-s}-\binom{k}{s}^2\binom{n-k}{k-s},$$
    which implies $|\mathcal{S}|\geq \binom{n}{s}-\binom{k}{s}^2$.

    Let $S\in\mathcal{S}$. Then at least one of $\mathcal{T}_s(\mathcal{A}_1),\dots, \mathcal{T}_s(A_r)$ does not contain $S$ by $\mathcal{A}_1,\dots,\mathcal{A}_r$ being $r$-cross $[0,s-1]$-intersecting. Suppose $S$ is contained in at most $r-2$ families of $\mathcal{T}_s(\mathcal{A}_1),\dots, \mathcal{T}_s(A_r)$. Then we have $$\sum_{i=1}^r|\mathcal{A}_i(S)|\leq (r-2)\binom{n-s}{k-s}+\binom{n-s}{k-s}-\binom{n-k}{k-s}-1<(r-1)\binom{n-s}{k-s}-\binom{n-k}{k-s},$$
    a contradiction with $S\in\mathcal{S}$. Hence  for every $S\in\mathcal{S}$, $S$ is contained in exactly $r-1$ families of $\mathcal{T}_s(\mathcal{A}_1),\dots, \mathcal{T}_s(A_r)$.
    Thus we have  \begin{align}\label{e4-2}
    \sum_{i=1}^r|\mathcal{T}_s(A_i)|\geq (r-1)|\mathcal{S}|\geq (r-1)\binom{n}{s}-(r-1)\binom{k}{s}^2.\end{align}
    Note that $\mathcal{T}_s(\mathcal{A}_1),\dots,\mathcal{T}_s(\mathcal{A}_r)\subseteq\binom{[n]}{s}$ are $r$-cross $[0,s-1]$-intersecting. If $\mathcal{T}_s(\mathcal{A}_i)\not=\emptyset$ for any $i\in [r]$,  then by induction hypothesis, we have
    $$\sum_{i=1}^r|\mathcal{T}_s(\mathcal{A}_i)|\leq (r-1)\binom{n}{s}-\binom{k}{s}\binom{n-k}{k-s}+1<(r-1)\binom{n}{s}-(r-1)\binom{k}{s}^2,$$
    a contradiction with (\ref{e4-2}). Thus at least one of $\mathcal{T}_s(\mathcal{A}_1),\dots,\mathcal{T}_s(\mathcal{A}_r)$ is empty.\q

 By Claim \ref{lm-4-1},
we may assume $\mathcal{T}_s(\mathcal{A}_1)$ is empty. Then for any $S\in \binom{[n]}{s}$, $|\mathcal{A}_1(S)|\leq \binom{n-s}{k-s}-\binom{n-k}{k-s}-1$.

\begin{claim}\label{D_1}
$\mathcal{S}\cap \partial_s(\mathcal{A}_1)=\emptyset$.
\end{claim}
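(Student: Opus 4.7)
The plan is to prove the claim by contradiction, exploiting the $r$-cross $[0,s-1]$-intersecting property together with the assumption that $\mathcal{T}_s(\mathcal{A}_1) = \emptyset$. The key observation is that if $S \in \partial_s(\mathcal{A}_1)$, then there exists $A \in \mathcal{A}_1$ with $S \subseteq A$, and this single witness rules out too many sets containing $S$ from belonging to all the other $\mathcal{A}_i$ simultaneously.

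The argument proceeds as follows. Assume for contradiction that some $S \in \mathcal{S} \cap \partial_s(\mathcal{A}_1)$ exists, and fix $A \in \mathcal{A}_1$ with $S \subseteq A$. If every $\mathcal{A}_i(S)$ with $2 \leq i \leq r$ were non-empty, then picking $F_i \in \mathcal{A}_i(S)$ for each such $i$ together with $F_1 = A$ would yield $S \subseteq A \cap F_2 \cap \cdots \cap F_r$, forcing $|\bigcap_{i=1}^r F_i| \geq s$, which contradicts the $r$-cross $[0,s-1]$-intersecting hypothesis. Consequently at least one index $j \in \{2,\dots,r\}$ has $\mathcal{A}_j(S) = \emptyset$.

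Combining this with the bound coming from $\mathcal{T}_s(\mathcal{A}_1) = \emptyset$, namely $|\mathcal{A}_1(S)| \leq \binom{n-s}{k-s} - \binom{n-k}{k-s} - 1$, and the trivial bound $|\mathcal{A}_i(S)| \leq \binom{n-s}{k-s}$ for the remaining $r-2$ non-trivially bounded families, we get
\[
\sum_{i=1}^r |\mathcal{A}_i(S)| \leq \binom{n-s}{k-s} - \binom{n-k}{k-s} - 1 + (r-2)\binom{n-s}{k-s} = (r-1)\binom{n-s}{k-s} - \binom{n-k}{k-s} - 1,
\]
which strictly violates the defining inequality of $\mathcal{S}$. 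This yields the desired contradiction.

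There is no genuine obstacle here; the only subtlety is correctly bookkeeping which index produces the empty $\mathcal{A}_i(S)$, and making sure the strict inequality arising from the $-1$ term in the bound on $|\mathcal{A}_1(S)|$ is preserved so that $S \notin \mathcal{S}$ follows cleanly. The step is essentially a clean combinatorial consequence of the definitions of $\mathcal{T}_s$, $\mathcal{S}$, and $\partial_s$, once the $r$-cross intersection is used to kill one of the $\mathcal{A}_i(S)$.
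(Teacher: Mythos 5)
Your proposal is correct and follows essentially the same route as the paper: both arguments note that $\mathcal{A}_1(S)\neq\emptyset$ forces one of $\mathcal{A}_2(S),\dots,\mathcal{A}_r(S)$ to be empty by the $r$-cross $[0,s-1]$-intersecting property, then combine the bound $|\mathcal{A}_1(S)|\leq\binom{n-s}{k-s}-\binom{n-k}{k-s}-1$ from $\mathcal{T}_s(\mathcal{A}_1)=\emptyset$ with the trivial bound on the remaining $r-2$ families to contradict $S\in\mathcal{S}$. The only difference is that you spell out the witness $A\supseteq S$ explicitly, which the paper leaves implicit.
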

\noindent{\bf Proof of Claim \ref{D_1}} Suppose there is $S\in \mathcal{S}\cap \partial_s(\mathcal{A}_1)$. Then $\mathcal{A}_1(S)\not=\emptyset$. Since at least one of $\mathcal{A}_1(S),\dots,\mathcal{A}_r(S)$ is empty, we have
$$\sum_{i=1}^r|\mathcal{A}_i(S)|\leq (r-2)\binom{n-s}{k-s}+\binom{n-s}{k-s}-\binom{n-k}{k-s}-1<(r-1)\binom{n-s}{k-s}-\binom{n-k}{k-s},$$
    a contradiction with $S\in\mathcal{S}$.\q

By Claims \ref{D_1} and  \ref{lm-4-1},  $|\partial_s(\mathcal{A}_1)|\leq \binom{k}{s}^2$. Assume $|\partial_s(\mathcal{A}_1)|=\binom{x}{s}$, where $x$ is a real number. Then $x=O(1)$ and by Corollary \ref{cor}, we have $|\mathcal{A}_1|\leq \binom{x}{k}$. For $i=2,\dots,r$,
 set $\mathcal{B}_i=\left\{ F\in\mathcal{A}_i:\partial_s(F)\cap\partial_s(\mathcal{A}_1)=\emptyset \right\}$.
  Since $|\partial_s(\mathcal{A}_1)|=O(1)$, we have $|\partial_1(\mathcal{A}_1)|=O(1)$. Let $A_1'=[n]\setminus \partial_1(\mathcal{A}_1)$. Then $|A_1'|=n-O(1)$. For every $S\in \partial_s(\mathcal{A}_1)$, choose $B\in \binom{A_1'}{k-s}$, we have $S\cup B\in \overline{\mathcal{B}_i}$ for all $i=2,\dots,r$, where $\overline{\mathcal{B}_i}=\binom{[n]}{k}\setminus \mathcal{B}_i$. Thus for $i=2,\dots,r$, $$|\overline{\mathcal{B}_i}|\geq |\partial_s(\mathcal{A}_1)|\binom{n-|\partial_1(\mathcal{A}_1)|}{k-s}=(1-o(1))\binom{x}{s}\binom{n-s}{k-s}.$$
Then for $i=2,\dots,r$, we have $$|\mathcal{B}_i|\leq \binom{n}{k}-(1-o(1))\binom{x}{s}\binom{n-s}{k-s}.$$
  And for every $S\in \partial_s(\mathcal{A}_1)$, at least one of $\mathcal{A}_2(S),\dots,\mathcal{A}_r(S)$ is empty. Hence $\sum_{i=2}^r\mathcal{A}_i(S)\leq (r-2)\binom{n-s}{k-s}$.
Thus
\begin{align*}
    \sum_{i=1}^r|\mathcal{A}_i|
    &\leq |\mathcal{A}_1|+\sum_{i=2}^r|\mathcal{B}_i|+\sum_{S\in \partial_s(\mathcal{A}_1)}\sum_{i=2}^r|\mathcal{A}_r(S)|\\
    &\leq \binom{x}{k}+(r-1)\left(\binom{n}{k}-(1-o(1))\binom{x}{s}\binom{n-s}{k-s} \right)+(r-2)\binom{x}{s}\binom{n-s}{k-s}\\
    &=(r-1)\binom{n}{k}-(1-o(1))\binom{x}{s}\binom{n-s}{k-s}+O(1).
\end{align*}
    By (\ref{e4-1}) and $\ell=0$, $\sum_{i=1}^r|\mathcal{A}_i|\geq (r-1)\binom{n}{k}-\binom{k}{s}\binom{n-k}{k-s}-o(n^{k-s})$.
    Then we have $x\leq k$ when $n$ is large enough. Since $\mathcal{A}_1$ is not empty, it's immediately obtained that $\mathcal{A}_1$ is identical singletons. We may assume $\mathcal{A}_1=\{[k]\}$.
    For $i=2,\dots,r$, we set $$\mathcal{C}_i^{s-1}=\left\{ A\in \mathcal{A}_i: |A\cap[k]|\leq s-1  \right\},$$ and for every integer $t\in[s,k]$, set
    $$\mathcal{C}_i^t=\left\{A\in \mathcal{A}_i:|A\cap [k]|=t \right\}.$$
    Then $\mathcal{A}_i=\bigcup_{t=s-1}^k\mathcal{C}_i^t$ for $i=2,\dots r$. And we have $$\sum_{i=1}^r|\mathcal{C}_i^{s-1}|\leq (r-1)\left(\binom{n}{k}-\sum_{i=s}^k\binom{k}{i}\binom{n-k}{k-i}\right).$$
    Since for every set $T\subseteq[k]$ with $|T|=t\geq s$, at least one of $\mathcal{A}_2(T),\dots,\mathcal{A}_r(T)$ is empty, we have
    $$\sum_{t=s}^k\sum_{i=2}^r|\mathcal{C}_i^t|\leq (r-2)\sum_{i=s}^k\binom{k}{i}\binom{n-k}{k-i}.$$
    Finally, we have
    \begin{align*}
        \sum_{i=1}^r|\mathcal{A}_i|&=1+\sum_{i=2}^r|\mathcal{C}_i^{s-1}|+\sum_{i=2}^r\sum_{t=s}^k|\mathcal{C}_i^t|\\
        &\leq 1+(r-1)\left(\binom{n}{k}-\sum_{i=s}^k\binom{k}{i}\binom{n-k}{k-i} \right)+(r-2)\sum_{i=s}^k\binom{k}{i}\binom{n-k}{k-i}\\
        &=(r-1)\binom{n}{k}-\sum_{i=s}^k\binom{k}{i}\binom{n-k}{k-i}+1,
    \end{align*}
  and we are done.

  \noindent{\bf Case 2.} $\ell>0$.

    In this case,  we will prove
    $$\sum_{i=1}^r|\mathcal{A}_i|\leq(r-1)\binom{n-\ell}{k-\ell}-\sum_{i=s-\ell}^{k-\ell}\binom{k-\ell}{i}\binom{n-k}{k-\ell-i}+1.$$
    We may assume $|\mathcal{A}_1|\leq \dots\leq |\mathcal{A}_r|$.
We need the following results.
\begin{claim}\label{lem: ell intersecting}
    $\mathcal{A}_i$ is $\ell$-intersecting for all $i\in[r-1]$, and  the families $\mathcal{A}_1,\dots,\mathcal{A}_{r-1}$ are pairwise cross $\ell$-intersecting.
\end{claim}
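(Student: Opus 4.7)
The plan is to establish the two assertions separately, with the first being an immediate consequence of the $r$-cross $[\ell,s-1]$-intersecting hypothesis and the second following from a contradiction against the lower bound (\ref{e4-1}), adapted from the argument in Lemma \ref{claim: l1 intersecting}(i).

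For the pairwise cross $\ell$-intersecting assertion, I would argue directly. Given distinct $i, j \in [r-1]$ and any $A \in \mathcal{A}_i$, $B \in \mathcal{A}_j$, choose an arbitrary $F_p \in \mathcal{A}_p$ for each $p \in [r] \setminus \{i, j\}$ (possible since every $\mathcal{A}_p$ is non-empty), and set $F_i = A$, $F_j = B$. Then the $r$-cross hypothesis yields $\left|\bigcap_{p=1}^r F_p\right| \geq \ell$, and since $\bigcap_{p=1}^r F_p \subseteq A \cap B$, this gives $|A \cap B| \geq \ell$.

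For the $\ell$-intersecting assertion, I would argue by contradiction, paralleling Lemma \ref{claim: l1 intersecting}(i). Suppose some $\mathcal{A}_i$ with $i \in [r-1]$ contains sets $A_1, A_2$ with $|A_1 \cap A_2| \leq \ell - 1$. Fix $F_p \in \mathcal{A}_p$ for each $p \in [r] \setminus \{i, r\}$, and set $X = \bigcap_{p \neq i, r} F_p$ (with the convention $X = [n]$ if $r = 2$). Applying the $r$-cross property twice, once with $F_i = A_1$ and once with $F_i = A_2$, yields $|A_1 \cap F_r \cap X| \geq \ell$ and $|A_2 \cap F_r \cap X| \geq \ell$ for every $F_r \in \mathcal{A}_r$. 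Inclusion--exclusion, together with $|A_1 \cap A_2| \leq \ell - 1$, then forces $|F_r \cap (A_1 \cup A_2)| \geq \ell + 1$. Bounding the number of $k$-subsets of $[n]$ that meet $A_1 \cup A_2$ in at least $\ell + 1$ elements gives $|\mathcal{A}_r| = O(n^{k-\ell-1})$, and since $|\mathcal{A}_i| \leq |\mathcal{A}_r|$ for all $i$, we also obtain $\sum_{i=1}^{r}|\mathcal{A}_i| = O(n^{k-\ell-1})$. This contradicts (\ref{e4-1}), whose leading term $(r-1)\binom{n-\ell}{k-\ell}$ is of order $\Theta(n^{k-\ell})$, once $n$ is sufficiently large.

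The main obstacle, though modest, is the inclusion--exclusion step and the routine verification that the resulting bound on $|\mathcal{A}_r|$ is indeed one order smaller than the target lower bound. The edge case $r = 2$ is handled uniformly by setting $X = [n]$, and the whole argument is essentially a direct $r$-cross adaptation of the pairwise cross argument already present in the paper.
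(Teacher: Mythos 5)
Your proposal is correct and follows essentially the same route as the paper: the heart of both arguments is the observation that if $|A_1\cap A_2|\le \ell-1$ then every $F\in\mathcal{A}_r$ meets $A_1\cup A_2$ in at least $\ell+1$ points, giving $|\mathcal{A}_r|=O(n^{k-\ell-1})$ and hence $\sum_{i=1}^r|\mathcal{A}_i|=O(n^{k-\ell-1})$, contradicting (\ref{e4-1}). If anything your write-up is slightly more careful than the paper's, which asserts $|A\cap A_1\cap A_2|\ge\ell$ even when $A_1,A_2$ lie in the same family (where only $|A\cap A_1|\ge\ell$ and $|A\cap A_2|\ge\ell$ are available, and the inclusion--exclusion step you spell out is exactly what is needed), and your direct derivation of the pairwise cross $\ell$-intersecting statement is a clean simplification of the paper's unified contradiction.
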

\noindent{\bf Proof of Claim \ref{lem: ell intersecting}}
 Suppose there is $i\in [r-1]$ with $A_1,A_2\in \mathcal{A}_i$ or there are $i,j\in [r-1]$ with $i\not=j$ and $A_1\in \mathcal{A}_i$ and $A_2\in \mathcal{A}_j$
    such that $|A_1\cap A_2|\leq \ell-1$. Since for every set $A\in \mathcal{A}_r$, $|A\cap A_1\cap A_2|\geq \ell$,  we have $|A\cap (A_1\cup A_2)|\geq \ell+1$. Then
    $$|\mathcal{A}_r|= \sum_{i=\ell+1}^k\binom{|A_1\cup A_2|}{i}\binom{n-|A_1\cup A_2|}{k-i}=O(n^{k-\ell-1}).$$
    Then $$\sum_{i=1}^r|\mathcal{A}_i|\leq r|\mathcal{A}_r|=O(n^{k-\ell-1}),$$
    a contradiction with (\ref{e4-1}).\q

\begin{claim}\label{lem: X in all set}
    There exists a set $X\in \binom{[n]}{\ell}$ such that $X\subseteq\bigcap_{A\in \mathcal{A}_i} A$ for every $i\in [r]$.
\end{claim}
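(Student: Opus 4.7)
\noindent\textit{Proof plan.} The plan is to mimic the strategy from the proof of Lemma~\ref{claim: l1 intersecting}(ii): first apply Theorem~\ref{thm: frankl intersect} to produce a common $\ell$-subset $X$ for one of the families, and then propagate $X$ to every remaining family by combining Theorem~\ref{thm: matchings} with the intersection constraints given by Claim~\ref{lem: ell intersecting} and by the $r$-cross $[\ell,s-1]$-intersecting hypothesis.

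The first step is to show that $|\mathcal{A}_{r-1}|\ge Cn^{k-\ell}$ for some $C=C(k,\ell,r)$ once $n$ is large. Fix representatives $A_i\in\mathcal{A}_i$ for $i\in[r-1]$ and set $I=\bigcap_{i=1}^{r-1}A_i$; picking any $A\in\mathcal{A}_r$ and using $r$-cross $[\ell,s-1]$-intersection forces $|I|\ge\ell$. In the generic case where the $A_i$'s can be chosen so that $|I|=\ell$, the requirement $|A\cap I|\in[\ell,s-1]$ becomes $|A\cap I|=\ell$, hence $I\subseteq A$ for every $A\in\mathcal{A}_r$, so $|\mathcal{A}_r|\le\binom{n-\ell}{k-\ell}$. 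Combined with the lower bound (\ref{e4-1}), this yields
\[
\sum_{i=1}^{r-1}|\mathcal{A}_i|\ge (r-2)\binom{n-\ell}{k-\ell}-O(n^{k-s}),
\]
and so $|\mathcal{A}_{r-1}|\ge \tfrac{r-2}{r-1}\binom{n-\ell}{k-\ell}-O(n^{k-s-1})=\Omega(n^{k-\ell})$.

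Next, since $\mathcal{A}_{r-1}$ is $\ell$-intersecting by Claim~\ref{lem: ell intersecting} and has size $\Omega(n^{k-\ell})$, Theorem~\ref{thm: frankl intersect} furnishes an $\ell$-set $X\subseteq \bigcap_{A\in\mathcal{A}_{r-1}}A$. Every set of $\mathcal{A}_{r-1}$ contains $X$, so $|\hat{\mathcal{A}}_{r-1}(X)|=|\mathcal{A}_{r-1}|=\Omega(n^{k-\ell})$, and Theorem~\ref{thm: matchings} yields $k+1$ pairwise disjoint sets $B_1,\dots,B_{k+1}\in\hat{\mathcal{A}}_{r-1}(X)$, each disjoint from $X$, so that $X\cup B_t\in\mathcal{A}_{r-1}$ for every $t$. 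Suppose now, for contradiction, that $|A\cap X|\le\ell-1$ for some $A\in\mathcal{A}_j$ with $j\ne r-1$. Since $|A|=k$ and the $B_t$'s are disjoint, some $B_{t_0}$ satisfies $A\cap B_{t_0}=\emptyset$, giving $|A\cap(X\cup B_{t_0})|=|A\cap X|\le\ell-1$. For $j\in[r-2]$ this contradicts the pairwise cross $\ell$-intersecting property of $\mathcal{A}_j$ and $\mathcal{A}_{r-1}$ from Claim~\ref{lem: ell intersecting}; for $j=r$, choosing any $A_l\in\mathcal{A}_l$ for $l\in[r-2]$ and applying $r$-cross $[\ell,s-1]$-intersection to $(A_1,\dots,A_{r-2},X\cup B_{t_0},A)$ forces the $r$-fold intersection, which sits inside $A\cap(X\cup B_{t_0})$, to have at least $\ell$ elements, again a contradiction.

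The main obstacle will be the degenerate subcase in the first step where $|\bigcap_{i=1}^{r-1}A_i|\ge\ell+1$ for every choice of representatives, as the averaging that upper-bounds $|\mathcal{A}_r|$ by $\binom{n-\ell}{k-\ell}$ then breaks down. A natural remedy is to iterate the argument one level higher: such a universal $(\ell+1)$-lower bound forces the restricted families to share a common $(\ell+1)$-subset, which is strictly stronger than what is needed, and any $\ell$-subset of it serves as the required $X$, with the matching--propagation step of the last paragraph going through verbatim.
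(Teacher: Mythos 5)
Your overall route is the same as the paper's: bound $|\mathcal{A}_r|$ so that (\ref{e4-1}) forces some $\mathcal{A}_i$ with $i\in[r-1]$ to have size $\Omega(n^{k-\ell})$, extract the $\ell$-set $X$ from that family via Theorem \ref{thm: frankl intersect}, and propagate $X$ to all remaining families using the $k+1$ pairwise disjoint sets supplied by Theorem \ref{thm: matchings}. Your propagation step is correct and is in fact spelled out more carefully than in the paper (you treat $j=r$ explicitly through the $r$-cross condition, which the paper leaves implicit), and in your ``generic case'' the bound $|\mathcal{A}_r|\le\binom{n-\ell}{k-\ell}$ is cleaner than the paper's count.

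The degenerate subcase, however, is a genuine gap rather than a routine patch. The assertion that ``a universal $(\ell+1)$-lower bound forces the restricted families to share a common $(\ell+1)$-subset'' is false in general: for instance $\mathcal{A}_1=\dots=\mathcal{A}_{r-1}=\{A\in\binom{[n]}{k}:|A\cap[\ell+3]|\ge\ell+2\}$ has all pairwise (hence, for $r=3$, all $(r-1)$-wise) intersections of size at least $\ell+1$ but no common element at all. A kernel for an $(\ell+1)$-intersecting family only follows from Theorem \ref{thm: frankl intersect} when the family is already large --- and largeness is precisely what this step is supposed to produce, so the proposed remedy is circular. Moreover, even granting a common $(\ell+1)$-set, your last paragraph does not go through ``verbatim'': Theorem \ref{thm: matchings} needs $|\hat{\mathcal{A}}_i(X)|$ to be of order at least $n^{k-\ell-1}$ to yield the $k+1$ disjoint sets $B_t$, and in the degenerate branch you have not shown that any $\mathcal{A}_i$ with $i<r$ is that large. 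Note also that for $r=2$ the set $I$ is a single $k$-set, so the generic case never occurs and your whole argument collapses into this unjustified branch. The paper sidesteps the case split by bounding $|\mathcal{A}_r|$ directly from the condition $|A\cap A_1|\ge\ell$ for a fixed $A_1\in\mathcal{A}_1$ (its stated constant there is itself questionable, but the mechanism is different); to close your gap you need an honest upper bound on $|\mathcal{A}_r|$ that does not presuppose the existence of representatives with $|I|=\ell$.
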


\noindent{\bf Proof of Claim \ref{lem: X in all set}}
    By Claim \ref{lem: ell intersecting}, for every two sets $A_1,A_2\in \bigcup_{i=1}^{r-1}\mathcal{A}_i$, we have $|A_1\cap A_2|\geq \ell$. And for every $A\in \mathcal{A}_r$, $|A\cap (A_1\cup A_2)|\geq \ell$. So we have $|\mathcal{A}_r|\leq \sum_{i=\ell}^k\binom{|A_1\cup A_2|}{i}\binom{n}{k-i}=\binom{n}{k-\ell}+O(n^{k-\ell-1})$. Thus, by (\ref{e4-1}), $$\sum_{i=1}^{r-1}|\mathcal{A}_i|\geq (r-2)\binom{n-\ell}{k-\ell}-O(n^{k-\ell-1}).$$
Then, for every fixed constant $M\geq \max\{ C(k,L),D(k+1,k-\ell)+1\}$, where  $C(k,L)$ is the constant in Theorem \ref{thm: frankl intersect} and $D(k+1,k-\ell)$ is described in Theorem \ref{thm: matchings}, when $n$ is large enough, there exists $i\in [r-1]$ with $|\mathcal{A}_i|\geq Mn^{k-\ell-1}$. According to Claim \ref{lem: ell intersecting}, $\mathcal{A}_i$ is $\ell$-intersecting. By Theorem \ref{thm: frankl intersect}, there exists $X$ with $|X|=\ell$ such that $X\subseteq \bigcap_{A\in \mathcal{A}_i} A$. By Theorem \ref{thm: matchings}, we can find $k+1$ disjoint sets, denote as $B_1,B_2,\dots,B_{k+1}$, in $\hat{A}_i(X)$.

Suppose there is $A\in \mathcal{A}_j$ with $j\neq i$ such that $A\cap X\neq X$. Then there is  $B_t\in \{B_1,B_2,\dots,B_{k+1}\}$ that is disjoint with $A$ such that $B_t\cup X\in \mathcal{A}_i$ and $|(B_t\cup X)\cap A|=|X\cap A|\leq \ell-1$, a contradiction. \q

By Claim \ref{lem: X in all set}, there exist $X\in\binom{[n]}{\ell}$ and $X\subseteq A$ for all $A\in \mathcal{A}_i$ and all $i\in [r]$. For $i\in [r]$, we set $\mathcal{D}_i=\{A\setminus  X: A \in\mathcal{A}_i\}$. Then $\mathcal{D}_1,\dots,\mathcal{D}_r\subseteq\binom{[n]\setminus X}{k-\ell}$ are $r$-cross $[0,s-\ell-1]$-intersecting. By  Case 1, we have
$$\sum_{i=1}^r|\mathcal{A}_i|=\sum_{i=1}^r|\mathcal{D}_i|\leq (r-1)\binom{n-\ell}{k-\ell}-\sum_{i=s-\ell}^{k-\ell}\binom{k-\ell}{i}\binom{n-k}{k-\ell-i},$$
and we are done.\hfill$\square$

\section{Open problems}
In this paper, we  completely resolved the maximum sum problem for cross $L$-intersecting families. However, for pairwise cross $L$-intersecting families, we have only addressed the case where $n$ is sufficiently large and either $k \in L$ or $L = [0, k-1]$. Naturally, this leads to the following two open questions.

\noindent\textbf{Problem 1: } Let $r\ge 3$ and $\mathcal{A}_1, \dots, \mathcal{A}_r \subseteq \binom{[n]}{k}$ be non-empty pairwise cross $L$-intersecting families. For $k\in L\neq[t,k]$ and $n\geq k$, determine $\max\sum^r_{i=1}|\mathcal{A}_i|$ and characterize the extremal structure.

\noindent\noindent\textbf{Problem 2: } Let $r\ge 3$ and $\mathcal{A}_1, \dots, \mathcal{A}_r \subseteq \binom{[n]}{k}$ be non-empty pairwise cross $L$-intersecting families. For $k\notin L\neq [0,k-1]$ and sufficiently large $n$, determine $\max\sum^r_{i=1}|\mathcal{A}_i|$ and characterize the extremal structure.

Regarding $r$-cross $L$-intersecting families, much less is known. We thus pose the following additional question.

\noindent\noindent\textbf{Problem 3: } Let $r\ge 3$ and $\mathcal{A}_1, \dots, \mathcal{A}_r \subseteq \binom{[n]}{k}$ be non-empty $r$-cross $L$-intersecting families. For $L \subseteq [0, k]$ where $L$ is not an interval, determine $\max\sum^r_{i=1}|\mathcal{A}_i|$ and characterize the extremal structure.

One may also consider the maximum product problem for cross $L$-intersecting families. When the $L$-intersecting condition corresponds to $L = [t, k]$, this is a classical problem known as a conjecture of Tokushige. The latest results are shown below.
\begin{theorem}[\cite{ZHANG2025}]
    Let \( n, k, t \) be positive integers such that \( 3 \leq t \leq k \leq n \) and \( n \geq (t+1)(k-t+1) \). If $\mathcal{A} \subseteq \binom{[n]}{k}$ and $\mathcal{B}\subseteq \binom{[n]}{k}$ are cross-\( t \)-intersecting, then $$|\mathcal{A}| |\mathcal{B}| \leq \binom{n-t}{k-t}^2.$$
    Moreover, equality holds if and only if \(\mathcal{A} = \mathcal{B}\) is a maximum \( t \)-intersecting subfamily of $\binom{[n]}{k}$.
\end{theorem}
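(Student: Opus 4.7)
The plan is to attack this product bound by recasting it as a spectral problem on a graph in the Johnson scheme, and then combining Hoffman's ratio bound with the uniqueness part of the Ahlswede--Khachatrian theorem. Let $G$ be the graph on the vertex set $V=\binom{[n]}{k}$ in which $A$ is adjacent to $B$ whenever $|A\cap B|<t$; the condition that $\mathcal{A},\mathcal{B}$ are cross-$t$-intersecting is precisely the condition that no $G$-edge joins $\mathcal{A}$ to $\mathcal{B}$, i.e.\ $\chi_{\mathcal{A}}^{\top}M\chi_{\mathcal{B}}=0$, where $M$ is the adjacency matrix of $G$.

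The first step would be to pin down the spectrum of $M$. Writing $M=D_{k-t+1}+D_{k-t+2}+\cdots+D_k$ in the Johnson scheme $J(n,k)$, where $D_j$ is the distance-$j$ adjacency matrix (distance of $A,B$ being $k-|A\cap B|$), the eigenvalues of $M$ are explicit sums of Eberlein polynomials, indexed by $i\in\{0,1,\dots,k\}$. The crucial point is to identify the smallest eigenvalue $\lambda_{\min}$ and to verify, for $n\ge (t+1)(k-t+1)$ and $t\ge 3$, the Hoffman identity
\[
\frac{N(-\lambda_{\min})}{d-\lambda_{\min}}=\binom{n-t}{k-t},\qquad N=\binom{n}{k},
\]
where $d$ is the common degree of $G$. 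This identity is the assertion that Hoffman's ratio bound for $\alpha(G)$ is sharp and matches the Frankl/Ahlswede--Khachatrian value for the maximum $t$-intersecting family.

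Next I would invoke the expander-mixing-type inequality. Decomposing $\chi_{\mathcal{A}}$ and $\chi_{\mathcal{B}}$ along an orthonormal eigenbasis of $M$ and applying Cauchy--Schwarz, the hypothesis $\chi_{\mathcal{A}}^{\top}M\chi_{\mathcal{B}}=0$ yields
\[
\frac{d|\mathcal{A}||\mathcal{B}|}{N}\le |\lambda_{\min}|\sqrt{|\mathcal{A}|\bigl(N-|\mathcal{A}|\bigr)\,|\mathcal{B}|\bigl(N-|\mathcal{B}|\bigr)}.
\]
Writing $z=\sqrt{|\mathcal{A}||\mathcal{B}|}$ and using AM--GM in the form $\sqrt{(1-|\mathcal{A}|/N)(1-|\mathcal{B}|/N)}\le 1-z/N$ collapses the above into $z\,(d+|\lambda_{\min}|)\le N|\lambda_{\min}|$, so the Hoffman identity from the previous step gives the clean bound $\sqrt{|\mathcal{A}||\mathcal{B}|}\le \binom{n-t}{k-t}$, which is the desired product inequality.

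Finally, for the equality case, I would trace where slack can enter the argument. Equality in AM--GM forces $|\mathcal{A}|=|\mathcal{B}|$, and equality in Cauchy--Schwarz forces $\chi_{\mathcal{A}}-\frac{|\mathcal{A}|}{N}\mathbf{1}$ and $\chi_{\mathcal{B}}-\frac{|\mathcal{B}|}{N}\mathbf{1}$ to be nonzero, parallel, and to lie entirely in the $\lambda_{\min}$-eigenspace of $M$; combined with $|\mathcal{A}|=|\mathcal{B}|$ this forces $\chi_{\mathcal{A}}=\chi_{\mathcal{B}}$ and hence $\mathcal{A}=\mathcal{B}$. But then cross-$t$-intersecting collapses to $\mathcal{A}$ being $t$-intersecting of maximum size, and the uniqueness part of the Ahlswede--Khachatrian theorem in the range $n\ge (t+1)(k-t+1)$ identifies $\mathcal{A}$ as a maximum $t$-intersecting subfamily of $\binom{[n]}{k}$, as required. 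The main obstacle is the Johnson-scheme eigenvalue analysis: pinning down $\lambda_{\min}$ and verifying the Hoffman identity at the sharp threshold $n=(t+1)(k-t+1)$ (rather than only for $n$ much larger) requires a careful Eberlein-polynomial calculation, and this delicate analysis, specific to $t\ge 3$, is the technical core of \cite{ZHANG2025}.
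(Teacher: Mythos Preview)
This theorem is not proved in the present paper: it appears only in Section~5 as a quoted result from \cite{ZHANG2025}, used to motivate Problem~4. There is therefore no ``paper's own proof'' to compare against here.

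That said, your sketch is a faithful outline of the spectral strategy that underlies \cite{ZHANG2025}: embed the problem in the Johnson scheme, express the forbidden-intersection graph as a sum of distance graphs, and combine the bipartite Hoffman/expander-mixing inequality with AM--GM to reduce the product bound to the Hoffman ratio identity $N(-\lambda_{\min})/(d-\lambda_{\min})=\binom{n-t}{k-t}$. You have also correctly flagged the real difficulty: locating $\lambda_{\min}$ among the Eberlein sums and verifying the identity down to the threshold $n=(t+1)(k-t+1)$ is the substantive content, and your write-up does not attempt it. As a proof \emph{proposal} this is accurate and well-targeted; as a proof it is incomplete precisely where \cite{ZHANG2025} does the work. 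One small caution on the equality analysis: from parallelism of $\chi_{\mathcal{A}}-\tfrac{|\mathcal{A}|}{N}\mathbf{1}$ and $\chi_{\mathcal{B}}-\tfrac{|\mathcal{B}|}{N}\mathbf{1}$ together with $|\mathcal{A}|=|\mathcal{B}|$ you get $\chi_{\mathcal{A}}=\pm\chi_{\mathcal{B}}+c\mathbf{1}$, and ruling out the ``complement'' branch (which would give $\mathcal{B}=\binom{[n]}{k}\setminus\mathcal{A}$) requires one more line using the cross-$t$-intersecting hypothesis.
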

A natural question is to generalize cross $t$-intersecting  to cross $L$-intersecting.

\noindent\noindent\textbf{Problem 4: } Let $\mathcal{A},\mathcal{B} \subseteq \binom{[n]}{k}$ be cross $L$-intersecting families. Determine $|\mathcal{A}| |\mathcal{B}|$ and characterize the extremal structure.

It is worth noting that if $L=[t,k]$ in Problem 3, the extremal configuration consists of two identical maximum $L$-intersecting families. However, for general $L$, $\mathcal{A}$ and
$\mathcal{B}$ are not necessarily equal to obtain the maximum. For example, assume $L=\{\ell_1,\ell_2,\ldots,\ell_m\}\subseteq[0,k]$. If $\mathcal{A}=\mathcal{B}$ are cross $L$-intersecting, $\mathcal{A},\mathcal{B}$ must be $L$-intersecting. Thus by Theorem \ref{thm: frankl intersect}, $|\mathcal{A}||\mathcal{B}|\leq (\prod_{1\leq i
\leq m} \frac{n - \ell_i}{k - \ell_i})^2\leq O(n^{2m})$. However, consider the cross $L$-intersecitng families $\mathcal{A}'=\{[1,k]\}$ and $\mathcal{B}'=\{F\in\binom{[n]}{k}:|F\cap[1,k]|=\ell_1\}$, we have $|\mathcal{A}'||\mathcal{B}'|=O(n^{k-\ell_1})$. If $k-\ell_1> 2m$, $|\mathcal{A}'||\mathcal{B}'|>O(n^{2m})$. Thus, $\mathcal{A}=\mathcal{B}$ can not be extremal structure. Therefore, this problem presents significant challenges in conjecturing the extremal structures, but we believe it is a highly interesting and worthwhile question.

\section*{Acknowledgement}
This work is supported by the National Natural Science Foundation of China (Grant 12571372).

\section*{Declaration of competing interest}
The authors declare that they have no known competing financial interests or personal relationships that could have appeared to influence the work reported in this paper.

\section*{Data availability}
No data was used for the research described in the article.

\end{document}